\newtheorem{thm}{Theorem}[section]
\newtheorem{prop}[thm]{Proposition}
\newtheorem{define}[thm]{Definition}
\newtheorem{lemma}[thm]{Lemma}
\theoremstyle{remark}
\newtheorem{rem}{Remark}
\newcommand{\e}{\epsilon}
\newcommand{\p}{\partial}
\newcommand{\g}{\gamma}
\newcommand{\A}{\alpha}
\newcommand{\na}{\nabla}
\newcommand{\la}{\lambda}
\newcommand{\de}{\delta}
\newcommand{\R}{\mathbb{R}}
\newcommand{\Z}{\mathbb{Z}}
\newcommand{\N}{\mathbb{N}}
\newcommand{\F}{\mathcal{F}}
\newcommand{\PP}{\mathcal{P}}
\newcommand{\CC}{\mathbb{C}}
\newcommand{\les}{\lesssim}
\newcommand{\T}{\mathcal{T}}
\newcommand{\NN}{\mathcal{N}}
\newcommand{\Da}{\mathcal{W}}
\newcommand{\h}{\mathcal{H}}
\newcommand{\Y}{\mathcal{Y}}
\newcommand{\M}{\mathcal{M}}
\def\beq{\begin{equation}}
\def\eeq{\end{equation}}
\numberwithin{equation}{section}
\subjclass[2020]{42B25, 44A15, 47G10}
\keywords{Maximal function, Hilbert transform,  monomial  curves, higher dimensions}
\begin{document}
\title[Sharp maximal function  estimates  for   Hilbert transforms]{Sharp maximal function  estimates  for   Hilbert transforms along monomial curves in  higher dimensions}

\author[R. Wan]{ Renhui Wan}

\address{School of Mathematical Sciences and Institute of Mathematical Sciences, Nanjing Normal University, Nanjing 210023, People's Republic of China\\
ORCID:https://orcid.org/0009-0001-4558-7445}

\email{wrh@njnu.edu.cn}

\vskip .2in
\begin{abstract}
For any nonempty set $U\subset\R^+$, we consider the maximal operator $\h^U$ defined as $\h^Uf=\sup_{u\in U}|H^{(u)} f|$, where $H^{(u)}$ represents the Hilbert transform along the monomial curve $u\gamma(s)$. We focus on the $L^p(\mathbb{R}^d)$ operator norm of $\h^U$ for $p\in (p_\circ(d),\infty)$, where $p_\circ(d)$ is the optimal exponent known for the $L^p$ boundedness of the maximal averaging operator obtained by Ko-Lee-Oh \cite{KLO22,KLO23} and Beltran-Guo-Hickman-Seeger \cite{BGHS}.
To achieve this goal, we employ a novel bootstrapping argument to establish a maximal estimate for the Mihlin-H\"{o}rmander-type multiplier, along with utilizing the local smoothing estimate for the averaging operator and its vector-valued extension to obtain crucial decay estimates. Furthermore, our approach offers an alternative means for deriving the upper bound established  in \cite{Guo20}.
\end{abstract}

\maketitle
\vskip .2in
\section{Introduction}
\label{s1}
There is an immense body of literature devoted to
 various important euclidean harmonic analysis problems associated with a surface or curve; for instance,
  restriction estimates \cite{G16,HKL20,HL14},
   $L^p$ estimates   of   averages and maximal averages \cite{G09,H16,S14,TW03,KLO22,KLO23,BGHS},   spherical averages  \cite{B86,S76},  Carleson maximal  operators \cite{L20,SW01,ZK21} and    singular integral operators \cite{GHLR,L19,LSY21,LY21,Wan19,Wan22}.
In this paper, we will consider a maximal function for families of Hilbert transforms along monomial curves in higher dimensions.

For an integer $d \geq 2$, let $\{\A_l\}_{l=1}^d$ be a sequence of distinct positive constants. Consider a monomial curve $\gamma: \mathbb{R} \to \mathbb{R}^d$ defined by $\gamma(s) := (s^{\A_1}, s^{\A_2}, \ldots, s^{\A_d})$.
\footnote{By convention, when $0<a\notin \Z$, the expression $s^a$ stands here for $|s|^a$ or for $sgn(s)|s|^a$ throughout this paper.}   The moment curve $(s, s^2, \ldots, s^d)$ is a typical example.
The Hilbert transform along the curve $u\gamma(s)$ $(u>0)$
acts on the Schwartz function $f$  by
\begin{equation}\label{hil}
H^{(u)}f(x)={\rm p.v.} \int_\R f(x+u\gamma(s))\frac{ds}{s}.
\end{equation}
For an arbitrary nonempty set $U\subset\R^+$, we will consider the following maximal function:
\begin{equation}\label{hil2}
\h^Uf(x)=\sup_{u\in U}|H^{(u)}f(x)|.
\end{equation}
This result that
 the individual operator $H^{(u)}$ is bounded on $L^p(\mathbb{R}^d)$ for $p \in (1, \infty)$, can be found in references such as \cite{SW78,DR86}. However, the maximal operator $\h^U$  is more intricate and requires further investigation.
The purpose of this paper is to obtain a sharp result for the $L^p$ operator norm of $\h^U$ defined by
$$\|\h^U\|_{L^p\to L^p}=\sup\{\|\h^U f\|_{L^p}:\ \|f\|_{L^p}\le 1\}$$
in relation to appropriate attributes of the set $U$.  To avoid cluttering the display, hereinafter we set
  \begin{equation}\label{alpha}
     1=\A_1<\A_2<\cdots<\A_d\hskip.2in  {\rm and} \hskip.2in\A_i\in\Z\hskip.1in{\rm for }\hskip.1in i=2,\cdots,d.
  \end{equation}

For the case of $d=2$, it can be shown  that $\h^U$ is equivalent to the maximal operator investigated by Guo-Roos-Seeger-Yung \cite{Guo20} through the change of  variable $s\to s/u$. They established a sharp bound $\sqrt{{\rm log}(e+\Re(U))}$ (up to a constant) for $p>2$,  where $\Re(U)$ is defined by
  \begin{equation}\label{bb1}
  \Re(U):= \#\{n\in\Z:\ [2^n,2^{n+1})\cap U\neq \emptyset\}.
  \end{equation}
   A bit more precisely, the upper bound in \cite{Guo20} was established by employing the local smoothing estimate for the wave-type operator and its square-function extension (see \cite{Se88}), as well as the maximal $L^p$ estimate for the Mihlin-H\"{o}rmander-type multiplier with respect to nonisotropic dilations. Additionally,  the lower bound in \cite{Guo20} was proven through the establishment of a crucial  generalization of Karagulyan's main theorem (see \cite{Ka07}).
 On the other hand, if we replace the curve $u\gamma(s)$ with the line $(s,us)$, the resulting operator $\h^U$ will be  the directional Hilbert transform denoted as $\h_L^U$, which is
 the primary focus of research in Stein's conjecture (see \cite{S87}).  Karagulyan \cite{Ka07} proved that there is a uniform constant $c>0$ such that  the $L^2\to L^{2,\infty}$ operator norm of the operator  $\h_L^U$ is bounded below by $c\sqrt{\log(\# U)}$.   \L aba, Marinelli and Pramanik \cite{LMP19} extended  this weak $L^2$ result   to all $L^p$ norms.
  Demeter \cite{D10},
  in particular,  proved  a sharp $L^2$  bound  ${\log(\# U)}$ (up to a constant).
   Afterwards, for the $L^p\to L^p$ operator norm of the operator $\h_L^U$, Demeter and Di Plinio \cite{DD14} found the upper bound  $C{\log(\# U)}$ for certain $C>0$ whenever $p>2$. Besides,  they obtained a sharp bound
  $\sqrt{\log(\#U)}$ (up to a constant) for   lacunary sets of direction as well as   some enhancements for Vargas-type direction sets.  Di Plinio and Parissis \cite{DP18} recently proved a similar result with regard to lacunary directions.
 We refer to \cite{ADP21,DP20,DGTZ18,KP22} and references therein for more significant developments on works related to the directional Hilbert transform.

However, there are few works on the $L^p$ boundedness of $\h^U$ in  higher dimensions, $d\ge3$, which is logically believed to be more challenging since  the related multipliers decay more slowly.
In reality, this problem is strongly connected to  the $L^p$ local smoothing estimate for the averaging operator  over the non-degenerate\footnote{The non-degenerate curve $\tilde{\g}$  means that $\tilde{\g}$ satisfies det$(\tilde{\g}',\tilde{\g}'',\cdots,\tilde{\g}^{(d)})(s)\neq 0$ on supp$\Psi_\circ$.} space curve $\tilde{\g}$  described by
$$ A_uf(x)=\int  f\big(x+u \tilde{\g}(s)\big)\Psi_\circ(s) ds,$$
where $\Psi_\circ$ is a bump function.    Moreover, by substituting  $s^{-1}$ with $\Psi_\circ(s)$ in (\ref{hil}),
one can basically link $\h^U$ to
 the maximal averaging operator  over the curve $\tilde{\g}$
$$\M f (x)=\sup_{u>0} |A_uf(x)|.$$
Indeed, the proof of  the desired estimate of   $\h^U$ will be dependent on  the $L^p$ estimate of $\M$
(or the $L^p$ local smoothing estimate of $A_u$). As a result, we mention some partial works on the $L^p$ estimate of $\M$ below.

The maximal averaging operators  over dilated submanifolds  have long been studied. According to
 Stein \cite{S76},   the spherical maximal function is  $L^p$ bounded if and only if $p>{d}/(d-1)$. Nearly a decade later, Bourgain \cite{B86} showed the  remainder $d=2$ using the approach of continuum incidence geometry; also see \cite{Sc97,Sc98,Lee03}.   The problem gets more complex for circles or curves with non-vanishing curvature in $\R^2$ since the conventional interpolation reasoning, which is heavily dependent on the $L^2$ estimate, no longer works.
  Pramanik and Seeger \cite{PS07} demonstrated for the first time that $\M$ is $L^p$ bounded for $p>p_w/2+1$ whenever the $l^p$ decoupling inequality (see \cite{W00,BD15}) holds for $p>p_w$ for $d=3$. Utilizing
   Bourgain and Demeter's
    $l^p$ decoupling inequality   on the optimal range $p\ge 6$ in \cite{BD15}, one can derive that $\M$ is $L^p$ bounded for $p>4$. Recently, Ko-Lee-Oh \cite{KLO22} and Beltran-Guo-Hickman-Seeger  \cite{BGHS}   demonstrated, respectively, that  $\M$ is $L^p$ bounded for the optimal range of $3<p\le \infty$
     based on two independent approaches: the $L^p$-$L^q$  smoothing estimate and the $L^p$ local smoothing estimate.
 For $d\ge 4$,  Ko-Lee-Oh \cite{KLO23}  established the $L^p$ boundedness of $\M$ for $p>2d-2$ by developing  the sharp local smoothing estimate in higher  dimensions.  This,   coupled with the fact that $\M$ cannot be bounded on $L^p$ if $p\le d$, as shown through a straightforward adaptation  of the reasoning in  \cite{KLO22}, suggests that the $L^p$ boundedness of $\M$   for $p\in(d,2d-2]$  remains unresolved  for $d\ge4$; see, for example, \cite{Guth}. In what follows,
  $p_\circ(d)$ denotes  the   best exponent known for   the $L^p$ boundedness of $\M$.
 Specifically,
  we can see from \cite{BGHS,KLO22,KLO23} that
 \begin{equation} \label{pdd}
p_\circ(d)=\left\{
\begin{array}{l}
d,\ \ \hskip.3in\ {\rm if}\ d=2,3,  \\
2d-2,\ \ \ {\rm if}\ d\ge 4.
\end{array}
\right.
\end{equation}
 Motivated by the work in \cite{Guo20} on  the two-dimensional  $\h^U$  and recent advances in \cite{BGHS,KLO22,KLO23} on the the averaging operator and the maximal averaging operator, we are interested in the following question:

\smallskip
{\bf Question:}\   Is the sharp bound for the $L^p\to L^p$ operator norm of $\h^U$
 in higher dimensions valid for any $p\in(p_\circ(d),\infty)$?

\smallskip
We now state the main result of this paper, which gives an affirmative answer to this question.
  \begin{thm}\label{t1}
  Let $d\ge3$.
  For each $p\in (p_\circ(d),\infty)$, the operator $\h^U$ defined by (\ref{hil2}) is $L^p$ bounded if and only if $\Re(U)<\infty$. Moreover, we have
  $$\|\h^U\|_{L^p\to L^p}\sim \sqrt{{\rm log}(e+\Re(U))}.$$
  \end{thm}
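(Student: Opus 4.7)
\textbf{Plan for the proof of Theorem \ref{t1}.} I would split the task into upper and lower bounds. For the lower bound $\|\h^U\|_{L^p\to L^p}\gtrsim \sqrt{\log(e+\Re(U))}$, I would adapt the generalization of Karagulyan's theorem developed in \cite{Guo20}. The $\sqrt{\log}$ obstruction is essentially a one-dimensional phenomenon coming from the $s$-variable in the Hilbert transform along the curve, so the test-function construction of \cite{Guo20} (supported on sets $E_n$ adapted to the dyadic scales in $U$) should transfer to $d\ge 3$ with only cosmetic changes; in particular, the necessity of $\Re(U)<\infty$ propagates from the two-dimensional case by projecting onto the first two coordinates of $\R^d$.

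For the upper bound I would start with a Littlewood--Paley-type decomposition of the singular kernel $1/s$, writing $H^{(u)}f=\sum_{k\in\Z}H^{(u)}_k f$ where $H^{(u)}_k$ is localized to $|s|\sim 2^k$. Using the homogeneity $\g(2^k s/u)=(2^{k\A_l}u^{-\A_l}s^{\A_l})_l$, a rescaling in both $s$ and $x$ merges the $k$-parameter with the $u$-parameter, reducing matters to a maximal operator $\sup_{u\in \widetilde U}|T_u f|$ where $T_u$ has a smooth, bump-supported kernel and $\widetilde U$ has roughly the same $\Re$-size. This is the stage at which the nonisotropic Mihlin--H\"ormander framework (with dilations $(u^{\A_l})_l$) becomes the right language.

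The central new ingredient will be a maximal multiplier theorem proved by bootstrapping. I would frequency-localize the multiplier, splitting it into a piece where one can exploit the Fourier decay of the surface measure along $\g$ and a smooth piece. For the decaying piece, I would invoke the $L^p$ local smoothing estimate for $A_u$ together with a vector-valued/square-function extension, as furnished by \cite{KLO22,KLO23,BGHS} in the full range $p>p_\circ(d)$; combining this decay with a Cauchy--Schwarz $\ell^2$-over-$\ell^1$ trick (as in \cite{Se88,Guo20}) over the $\sim \log\Re(U)$ relevant scales of $U$ is what converts a $\log$ into the sharp $\sqrt{\log}$. For the smooth piece, a maximal Mihlin--H\"ormander estimate controls things uniformly in $u$, and I would close the argument by bootstrapping: assume an a priori bound with a (possibly large) logarithmic factor, reinject the local smoothing input to upgrade it, and iterate until the logarithmic loss stabilizes.

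The main obstacle will be making this bootstrap quantitative without sacrificing the sharp threshold $p_\circ(d)$. In dimension two \cite{Guo20} had enough Fourier decay along $\g$ that a single application of local smoothing sufficed, whereas for $d\ge 3$ the decay rate is slower, so a naive summation over frequency blocks would either force $p$ out of the range $(p_\circ(d),\infty)$ specified in \eqref{pdd} or leave an uncontrolled logarithmic factor. The bootstrap must therefore be calibrated so that each iteration gains exactly enough smoothing from the \cite{KLO22,KLO23,BGHS} estimates to offset the loss in the Mihlin--H\"ormander step, with the fixed point of the iteration precisely matching $p_\circ(d)$; executing this balancing act is the technical heart of the argument.
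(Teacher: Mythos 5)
Your high-level architecture (smooth piece plus oscillatory piece, local smoothing from \cite{KLO22,KLO23,BGHS} for the latter, a maximal multiplier theorem for the former, Karagulyan for the lower bound) matches the paper's, but the mechanisms you assign to the two pieces are essentially reversed, and this is a genuine gap rather than a stylistic difference. In the actual proof the oscillatory piece $T_u^{(l)}$ (where $\g'(s)\cdot\xi$ is small and local smoothing applies) is bounded \emph{uniformly over all} $u>0$ with a geometric gain $2^{-l\e_0}$, via the local smoothing estimate and its vector-valued extension plus a Sobolev embedding in $u$; no logarithmic bookkeeping over the scales of $U$ occurs there, and no ``$\ell^2$-over-$\ell^1$ Cauchy--Schwarz over $\log\Re(U)$ scales'' is needed or would produce $\sqrt{\log}$ for a supremum. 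The $\sqrt{\log(e+\Re(U))}$ comes entirely from the \emph{smooth} piece $S_u$, which cannot be ``controlled uniformly in $u$ by a maximal Mihlin--H\"ormander estimate'' as you claim --- indeed the lower bound construction lives precisely in this regime, so a uniform bound there would contradict the theorem. The sharp $\sqrt{\log N_0}$ for the maximal Mihlin--H\"ormander multiplier over $N_0$ dilation scales is extracted from the Chang--Wilson--Wolff exponential-square inequality (choosing $\e_*^{-2}\sim\log N_0$ in the good-$\lambda$ estimate), an ingredient your proposal never invokes and for which no substitute is offered.

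Relatedly, the ``bootstrap'' you describe --- assume an a priori bound with a large logarithmic factor, reinject local smoothing, and iterate until the loss stabilizes --- is not a viable argument: the operator norm is a fixed quantity and there is no self-improvement mechanism here, nor any calibration between local smoothing gains and Mihlin--H\"ormander losses (the range $p>p_\circ(d)$ enters only through the local smoothing piece, while the CWW/multiplier part works for all $p\in(1,\infty)$). The bootstrapping that is actually needed is structural: because the anisotropic dilations $\delta_j^\A$ act on all $d$ frequency variables, one must successively approximate $a_j(2^n\delta_j^\A(\xi))$ by $a_j(2^n\delta_j^\A(\xi_1,\dots,\xi_l,0,\dots,0))$ for $l=d-1,\dots,1$, controlling each difference by a square-function estimate plus CWW; this $(d-1)$-step reduction is the new content beyond \cite{Guo20}. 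Your lower bound sketch is closer to the mark, but note that ``projecting onto the first two coordinates'' still requires quantitative decay estimates for the $d$-dimensional oscillatory integral on suitable unbounded frequency sets (to build the two multiplier approximations), and the passage from $L^2$ to general $p$ needs an a priori $L^q$ upper bound for lacunary $U$ with some $q<2$ to interpolate against.
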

  \begin{rem}
We  list some comments on our main result.
\begin{itemize}
\item
Our technique works for $d=2$ as well,  we let $d\ge 3$ in Theorem \ref{t1} to stress the novel  part in the current study. Furthermore,
we  can verify  the lower bound for  all $p\in(1,\infty)$ (see Section \ref{sec8}), and the upper bound fails for $p\le d$ by modifying the counterexample  in \cite{KLO22}.

 \item
  The arguments stated here also hold
 for generic monomial  curves $\gamma(s)=([s]^{\A_1},\cdots,[s]^{\A_d})$, where $[s]^\A=c_+s^\A$ if $s>0$ and $[s]^\A=c_-(-s)^\A$ if $s<0$ for certain nonzero constants $c_\pm$. In fact, this form of curve was investigated in  \cite{Guo20}.

  \item
The subject of whether the monomial curves studied here can be generalized to more universal curves is interesting. It does, however, go beyond our techniques and prior methodologies in \cite{KLO22,KLO23,BGHS}.

\item
 Another intriguing topic is what occurs in the range $p\in (1, p_\circ(d))$. As far as we know, the two-dimensional case was addressed in \cite{GRSY20} with the premise that $U$ meets an extra sparseness condition. We expect that a similar sparseness requirement would be necessary for higher dimensions as well, but we opt not to study this topic because the dichotomy for the range of $p$ is unknown for $d\ge4$.

\end{itemize}
\end{rem}
\smallskip
\noindent{\bf Comments on the upper bound.} In the proof of the upper bound,  we will utilize
 two main novelties:
\begin{itemize}
    \item
    developing a bootstrapping  approach to show  a maximal estimate for the  Mihlin-H\"{o}rmander-type multiplier,
    which plays a crucial role in the proofs of Theorems \ref{t4.1} and \ref{1e};

    \item
    making full use of the  local smoothing estimate  for the averaging operator
    obtained by \cite{KLO22,KLO23,BGHS}
    and establishing its associated  vector-valued extension to achieve the desired decay in the proof of Theorem \ref{2e}.
\end{itemize}
By incorporating these strategies with an important inequality of the Chang-Wilson-Wolff kind, we can achieve  the desired estimate.
Additionally, since our method operates in two dimensions, it provides an alternative approach to addressing the associated difficulty discussed in \cite{Guo20}. 
Here we list  some comments on the proofs of Theorems \ref{t4.1} and \ref{2e}, as well as comparisons between our method and that of \cite{Guo20}.

\begin{itemize}
    \item

It is difficult to follow the procedure in \cite{Guo20} in reducing  the multiplier to the one without any dilations
 because the isotropic dilations have an effect on all variables at this point. Moreover,  this rationale, along with the Mihlin-H\"{o}rmander condition for anisotropic dilations, complicates the situation. To  address this difficulty, we employ a new bootstrapping argument that provides $d-1$ approximations of the original multiplier and eventually reduces the issue to a simple estimate; for more information, see Section \ref{s4}.

\item
In the proof of Theorem \ref{2e}, we deviate from the approach used in \cite{Guo20}, where the $L^p$ local smoothing estimate for the wave-type operator and its vector-valued extension were the main techniques employed to obtain the upper bound. Instead, we utilize the $L^p$ local smoothing estimate for the averaging operator and its vector-valued extension.
By employing these estimates, we are able to establish key square-function estimates. This is accomplished by introducing a cutoff function in (\ref{B}) and subsequently deriving a significant point-wise inequality (see (\ref{key}) below).
By leveraging this new approach, we are able to establish the desired result without relying on the wave-type operator's local smoothing estimate. 

\end{itemize}

\smallskip

\noindent{\bf Comments on the lower bound.}
  We first  construct   two desired approximations of the associated  multiplier with respect to  acceptable unbounded sets
   by  establishing   desired  decay estimates for certain oscillatory integrals (see Section \ref{sec8}), and then  utilize a Karagulyan-type theorem (see Proposition \ref{pp30}) from \cite{Guo20} to obtain the desired result.
   In particular, it is the choosing of these acceptable unbounded sets  that permits us to use only two approximations.

\medskip

\noindent{\bf Organization of the paper.} In Section \ref{s2},  we  reduce the proof of  the upper bound in Theorem \ref{t1}
to proving Theorems \ref{1e} and  \ref{2e}.
In Section \ref{sec.3}, we
provide  some auxiliary results, such as
the local smoothing estimates for the generic averaging operator and its vector-valued extension, the Chang-Wilson-Wolff-type inequality, some point-wise  inequalities for the martingale difference operator, and the H\"{o}rmander-type multiplier theorem.
Section \ref{s4} establishes  a crucial maximal  estimate that is used to prove Theorem \ref{1e}. Section \ref{s5} and Section \ref{s6}  give the proofs of Theorem \ref{2e} and Theorem  \ref{1e}, respectively.
In Section \ref{s7},
we show a maximum estimate of the Hilbert transform for lacunary sets.
In the last section, we demonstrate the lower bound in Theorem \ref{t1}.

\medskip

\noindent{\bf Notation.}   For any two quantities $x$ and $y$, we will write $x\lesssim y$ and $y\gtrsim x$ to denote   $x\le Cy$ for some absolute constant $C$. Subscripts will be used if the implied constant $C$ must be dependent on additional parameters. For example, $x\les_\rho y$ denotes
$x\les C_\rho y$ for some $C_\rho$ depending on $\rho$.
 If both $x\lesssim y$ and $x\gtrsim y$ hold, we use $x\sim y$. To abbreviate the notation, we will sometimes permit the implied constant to depend on certain fixed parameters (such as $\A_l$) when the issue of uniformity with respect to such parameters is irrelevant.
 The Fourier transform of a function $f$ is represented as $\mathcal{F}\{f\}$ or $\widehat{f}$, while the Fourier inverse transform of a function $g$ is represented by $\mathcal{F}^{-1}\{g\}$ or $\check{g}$.
 More precisely, we write
$$\mathcal{F}\{f\}(\xi)=\hat{f}(\xi)=\int_{\R^d} f(x)e^{-i\xi\cdot x} dx\ \ {\rm and}\ \    \mathcal{F}^{-1}\{g\}(x)
=\check{g}(x)=(2\pi)^{-d}\int_{\R^d} g(\xi)e^{-i\xi\cdot x} d\xi.$$
 Throughout  this paper, we omit the constant $(2\pi)^{-d}$ from  the  Fourier inverse transform  for  convenience.
 In some places of this paper,
 we   use  $|S|$ to  represent the Lebesgue measure of the set $S$, and
$\|\cdot\|_p$ to stand for $\|\cdot\|_{L^p(\R^d)}$.
Throughout this article, two cutoff functions $\psi: \mathbb{R} \to [0,1]$ and $\psi_\circ: \mathbb{R} \to [0,1]$ are fixed. The function $\psi$ has support on $\pm[1/2,2]$, while $\psi_\circ$ has support on $\pm[1/4,4]$. Additionally, $\psi_\circ$ is equal to one on the support of $\psi$.
\section{Reduction of the upper bound in  Theorem \ref{t1}}
\label{s2}
In this section, we reduce the proof of  the upper bound in  Theorem \ref{t1} to proving Theorems
\ref{1e} and  \ref{2e} below by employing an effective decomposition for $H^{(u)}$ (see (\ref{nde}) below).
Particularly, we  need to be extremely careful in selecting a smooth function  with compact support in the decomposition for $H^{(u)}$, which will play a crucial role in the proof of Theorem \ref{2e}.

\subsection{Littlewood-Paley decomposition}\label{sub2.1}
For $k\in\Z$, let
 $P_k$ be the usual  Littlewood-Paley projection on $\R^d$ with
$\widehat{P_kf}(\xi)=\psi(2^{-k}|\xi|)\widehat{f}(\xi)$,
 and write  $\F\{P_{\le k}f\}(\xi)=\phi(2^{-k}|\xi|)\widehat{f}(\xi)$, where the function $\psi$ is given as in {\bf Notation}, and the function $\phi$  satisfies
\begin{equation}\label{lpd}
 \phi(2^{-k}|\xi|)+\sum_{j>k}\psi(2^{-j}|\xi|)=1\ {\rm  for\  all}\  \xi\in\R^d.
 \end{equation}
 Obviously, we can write $f=P_{\le k}f+\sum_{j>k}P_kf$ for any $k\in\Z$ that we call the Littlewood-Paley decomposition of the function  $f$.
 Similarly, for $i=1,\cdots,d$ and $k\in\Z$,  we denote by $P_k^{(i)}$
  the  Littlewood-Paley projection in the $x
_i$-variable  on $\R$ with
$\F\{P_k^{(i)}f\}(\xi)=\psi(2^{-k}|\xi_i|)\widehat{f}(\xi),$ and   write
  $\F\{P_{\le k}^{(i)}f\}(\xi)=\phi(2^{-k}|\xi_i|)\widehat{f}(\xi)$. We thus also have
  $f=P_{\le k}^{(i)}f+\sum_{j>k}P_k^{(i)}f$.
  By employing a standard modification, we can extend the aforementioned definitions to encompass the case of $k\in\R$. More precisely,  if $k$ is not an integer,  we denote
$$P_k=P_{[k]},\ \ P_k^{(i)}=P_{[k]}^{(i)},\ \ P_{\le k}=P_{\le[k]},\ \ P_{\le k}^{(i)}=P_{\le[k]}^{(i)},$$
where $[\cdot]$ is the Gauss rounding function.
\subsection{Decomposition of the multiplier}\label{subss1}
Let $\{\delta_b^\A\}_{b\in\Z}$ be a dilation group defined by
\beq\label{v1}
\delta_b^{\A}(\xi)=(\frac{\xi_1}{2^{b\A_1}},
\frac{\xi_2}{2^{b\A_2}},\cdots,\frac{\xi_d}{2^{b\A_d}}),\ \ \hskip.3in  \xi=(\xi_1,\cdots,\xi_d)\in\R^d,
\eeq
where $\A=(\A_1,\cdots,\A_d)$ is given by (\ref{alpha}). Let
  $s^{-1}=\sum_{j\in\Z}\rho_j(s)$, where $\rho_j(s)=2^j\rho(2^js)$ with  $\rho(t)=\rho_0(t)$ is a smooth odd function supported in $\{s\in\R:\ 2^{-j-1}\le |s|\le 2^{-j+1}\}$.  Hence, we can write
$H^{(u)}f(x)=\sum_{j\in\Z}\int f(x+u\g(s))\rho_j(s)ds$.
Since  $\{\A_i\}_{i=1}^d\subset\Z$, \footnote{This assumption is to give a better presentation. Indeed, if $\A_i$ is not an integer for some $i$, we   use  $s^{-1}\chi_{s>0}=\sum_{j\in\Z}\rho_j^+(s)$ and $s^{-1}\chi_{s<0}=\sum_{j\in\Z}\rho_j^-(s)$ instead, where $\rho^+(s)$ is supported on $[1/2,2]$ and $\rho^-(s)=\rho^+(-s)$.}    using    the Fourier inverse transform and the change of  variable $s\to 2^{-j}s$, we have
$$H^{(u)}f(x)=\sum_{j\in\Z}\int_{\R^d} \widehat{f}(\xi)e^{i\xi\cdot x}m\left(u~\delta_j^\A(\xi)\right)d\xi, \ \ {\rm where}\ m(\xi):=\int e^{i\xi\cdot \g(s)} \rho(s)ds.$$
   Employing (\ref{lpd}) with $k=0$,  we can perform a further decomposition of $H^{(u)}f$ in the Fourier domain by splitting the symbol $m(\xi)$ into $A(\xi)$ and $B(\xi)$. More precisely,
\beq\label{m987}
m(\xi)=A(\xi)+B(\xi),
\eeq
where $A(\xi)$ and $B(\xi)$ are  given  by
\begin{align}
  A(\xi):=&\ m(\xi) \phi(|\xi|)+\sum_{l\ge 1}\psi(2^{-l}|\xi|)\int e^{i\xi\cdot \g(s)} \Upsilon^c\big(2^{-l} \g'(s)\cdot \xi\big)\rho(s)ds\  \label{A}\ {\rm and}\\
B(\xi):=&\sum_{l\ge 1} B_l(\xi),\ {\rm where}\  B_l(\xi)=\psi(2^{-l}|\xi|)\int e^{i\xi\cdot \g(s)}\Upsilon\big(2^{-l} \g'(s)\cdot \xi\big)\rho(s)ds.\   \label{B}
\end{align}
Here $\Upsilon$ in (\ref{B}) is a smooth function supported in $\{|\xi|\le c_0\}$ with sufficiently small  $c_0$ ($c_0=(9d)^{-1}2^{-2\A_d}$ is enough), and $\Upsilon^c=1-\Upsilon$.  We provide a heuristic explanation for the choice of $\Upsilon$. In fact, this particular choice can result in the existence of a pair $(l', l'') \in \{1,2,\cdots,d\}^2$ with $l' \neq l''$, such that the support of $B_l(\xi)$ is contained within a desired ``cube" in the $\xi_{l'}\xi_{l''}$ plane. Specifically,
if  $c_0$ is sufficiently small,  using
$1/2\le |s|\le2$, $ |\xi|\sim 2^l$ and $|\g'(s)\cdot \xi|
\le c_02^l$,  we can  deduce by a routine calculation  that
$|\xi_{l'}|\sim 2^l$ and $|\xi_{l''}|\sim 2^l$ hold simultaneously for some $l'\neq l''$.   The desired
$l'$ and $l''$, however, may depend on $\xi$. To fill this gap, we are  establishing  a crucial  point-wise estimate, see (\ref{yuel}) and Lemma \ref{l9} below. More importantly, this procedure will play an important role in closing the essential  square-function estimates in Section \ref{s6}.


We end this subsection by  explaining  the heuristic for the above decomposition (\ref{m987}). Indeed,  we can prove that $A(\xi)$ is a Schwartz function with $A(0)=0$. Precisely, it is clear  that
the first term  on the right-hand side of (\ref{A}) is   a Schwartz function which vanishes at the origin (since $m(0)=0$);  moreover,
observing the inequality  $|\xi\cdot \g'(s)|\gtrsim 2^l$ on supp$_\xi$ $\Upsilon^c\big(2^{-l} \g'(s)\cdot \xi\big)$, we can infer by     integrating  by parts that
$|\int e^{i\xi\cdot \g(s)} \Upsilon^c\big(2^{-l} \g'(s)\cdot \xi\big) \rho(s)ds\  \psi(2^{-l}|\xi|)|\les_N
2^{-Nl}$
for any $N\in\Z^+$, which yields  that
the second term on the right-hand side of (\ref{A}) is also a Schwartz function vanishing  at the origin. Regarding $B(\xi)$, we will attain the desired estimate by utilizing  the local smoothing estimate for the averaging operator, as well as its vector-valued extension.
\subsection{Reduction of the upper bound}\label{sub2.3}
 For every $u>0$, we  define two auxiliary operators  $S_u$ and $T_u:=\sum_{l\ge1}{T_u^{(l)}}$ by
 $$
\begin{aligned}
\F\{S_uf\}(\xi)=&\ \sum_{j\in\Z} A\left(u~\delta_j^\A(\xi)\right)\widehat{f}(\xi)\hskip.2in{\rm and}\hskip.2in
\F\{T_u^{(l)}f\}(\xi)=\ \sum_{j\in\Z} B_l\left(u~\delta_j^\A(\xi)\right)\widehat{f}(\xi).
\end{aligned}
$$
 This combined with  (\ref{m987}) implies that  for each $u>0$,
\begin{equation}\label{nde}
H^{(u)}f(x)=S_uf(x)+T_uf(x)=S_uf(x)+\sum_{l\ge 1}T_u^{(l)}f(x).
\end{equation}
To prove  the upper bound in Theorem \ref{t1},  it suffices to show the following theorems.
\begin{thm} \label{1e}
Let $d\ge 3$. For $p\in(1,\infty)$, then we have
\begin{equation}\label{ab21}
\big\|\sup_{u\in U}|S_uf|\big\|_p\les \sqrt{{\rm log}(e+\Re(U))} \big\|f\big\|_p,
\end{equation}
where $\Re(U)$ is given by (\ref{bb1}).
\end{thm}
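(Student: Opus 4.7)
The plan is to combine the maximal Mihlin--H\"ormander multiplier theorem (Theorem~\ref{t4.1}) with a Chang--Wilson--Wolff-type logarithmic gain to produce the $\sqrt{\log(e+\Re(U))}$ factor. The key structural input is that $A$ is Schwartz with $A(0) = 0$, so that for each $u>0$ the symbol $\sum_j A(u\delta_j^\alpha \xi)$ is a Mihlin--H\"ormander multiplier with respect to the nonisotropic dilation group $\{\delta_j^\alpha\}_{j\in\Z}$, with bounds uniform in $u$: the vanishing at $0$ controls the low-frequency tail of the sum and the Schwartz decay controls the high-frequency tail, leaving only $O(1)$ nonnegligible terms at each $\xi$.

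First I would reduce the supremum over $u\in U$ to a supremum over $\Re(U)$ representatives. Partitioning $U=\bigsqcup_{n\in N(U)}U_n$ with $U_n:=U\cap[2^n,2^{n+1})$ and picking $u_n\in U_n$, I would invoke Theorem~\ref{t4.1} (a maximal bound over bounded dilations of a Mihlin--H\"ormander symbol) to absorb $\sup_{u\in U_n}|S_uf|$ into $|S_{u_n}f|$ up to an $L^p$-bounded error. This reduces the task to establishing
\[
\Big\|\sup_{n\in N(U)}|S_{u_n}f|\Big\|_p \lesssim \sqrt{\log(e+\Re(U))}\,\|f\|_p.
\]

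Second, to extract the $\sqrt{\log}$ factor, I would linearize by choosing a measurable selector $n(x)\in N(U)$ and then apply the Chang--Wilson--Wolff-type inequality from Section~\ref{sec.3} to a discrete martingale decomposition adapted to the scales in $N(U)$. Concretely, decompose $f$ along a (nonisotropic) Littlewood--Paley family compatible with the dilation group, use the point-wise martingale-difference inequalities of Section~\ref{sec.3} to organize $S_{u_n}f$ into a dyadic sum, and apply the Chang--Wilson--Wolff distributional inequality to upgrade the uniform single-scale estimate from Theorem~\ref{t4.1} into a maximal bound over $\Re(U)$ scales with a $\sqrt{\log(e+\Re(U))}$ loss. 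Vector-valued Littlewood--Paley theory then reabsorbs the resulting square function into $\|f\|_p$.

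The main obstacle is the first step: verifying the nonisotropic Mihlin--H\"ormander hypotheses needed to invoke Theorem~\ref{t4.1}. Unlike the two-dimensional setting of \cite{Guo20}, the dilation group $\{\delta_j^\alpha\}$ acts with $d$ distinct exponents $\A_1,\ldots,\A_d$, so one cannot reduce to a scalar Mihlin--H\"ormander condition by a single dilation; this is precisely the obstacle that motivates the $d-1$-step bootstrapping scheme developed in Section~\ref{s4}. Once Theorem~\ref{t4.1} is secured, the outline above combines cleanly with the Chang--Wilson--Wolff framework and standard Littlewood--Paley theory to complete the proof of Theorem~\ref{1e}.
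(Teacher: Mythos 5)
Your overall architecture is the right one (reduce to the $\Re(U)$ dyadic blocks, exploit that $A$ is Schwartz with $A(0)=0$ so that $\sum_j A(u\,\delta_j^\A\xi)$ satisfies the nonisotropic Mihlin--H\"ormander condition, and get the $\sqrt{\log}$ factor from the Chang--Wilson--Wolff mechanism), but there is a genuine gap in your first step, and you have inverted the roles of Theorem \ref{t4.1} and the CWW inequality. Concerning the gap: absorbing $\sup_{u\in U_n}|S_uf|$ into $|S_{u_n}f|$ ``up to an $L^p$-bounded error'' does not reduce the problem, because you are left with $\sup_{n\in N(U)}E_nf$ where each $E_n$ is merely $L^p$-bounded; with $\Re(U)$ blocks this gives at best a $\sqrt{\Re(U)}$ or $\Re(U)^{1/p}$ loss, not $\sqrt{\log(e+\Re(U))}$. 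Theorem \ref{t4.1} cannot be invoked for this step either: its supremum is over a finite set $\NN\subset\Z$ of \emph{isotropic} dilation exponents $2^n$, so it says nothing about the continuous supremum over $u$ inside a single block $[2^n,2^{n+1})$. The paper closes this by writing $u=2^n\tau$, $\tau\in[1,2]$, and using the fundamental theorem of calculus, so that the error term is $\int_1^2|\partial_\tau(\Y_{2^n\tau}^{(l)}f)|\,d\tau$; one then checks that for each fixed $\tau$ the symbol of $\partial_\tau \Y_{2^n\tau}^{(l)}$ is again of the form $a(2^n\xi)$ with $a$ satisfying (\ref{cod1}), and applies the full $\sqrt{\log}$ maximal bound of Theorem \ref{t4.1} to the derivative as well. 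Some such device is indispensable.

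Second, Theorem \ref{t4.1} \emph{is} the maximal bound over the $\Re(U)$ representatives with the $\sqrt{\log(e+N_0)}$ constant; the linearization, the martingale square functions, and Proposition \ref{pp1} all live inside its proof. Your step two, which proposes to run the CWW distributional inequality again on top of a ``uniform single-scale estimate from Theorem \ref{t4.1},'' either duplicates Section \ref{s4} or quietly assumes the hard part, namely the uniform (in $N_0$) control of $\sup_n$ of the dyadic square function of $S_{u_n}f$ --- which is precisely where the $d-1$-step bootstrapping is needed, not merely in ``verifying hypotheses.'' Once the two steps are reassigned correctly, one further technical point remains: to verify (\ref{cod1}) with constants that sum, the paper decomposes $A=\sum_{l}\Phi_l$ with $\Phi_l=\psi(2^{-l}|\cdot|)A$ and gains a factor $2^{-|l|}$ from $A(0)=0$ (for $l\le0$) and the Schwartz decay (for $l\ge0$); your heuristic that ``only $O(1)$ terms contribute at each $\xi$'' points in the right direction but must be converted into the $W^{d+3,1}$ bound of (\ref{cod1}), uniformly over the anisotropic rescalings $\delta_k^\A$.
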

\vskip.1in
\begin{thm} \label{2e}
Let $d\ge 3$ and $l\ge1$. For $p\in (p_\circ(d),\infty)$, there is a positive constant $\e_0$ such that
\begin{equation}\label{2ee}
\big\|\sup_{u>0}|T_u^{(l)}f|\big\|_p\les 2^{-l\e_0} \big\|f\big\|_p.
\end{equation}
\end{thm}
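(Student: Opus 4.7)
The goal is to control $\sup_{u>0}|T_u^{(l)}f|$ by a square function to which the $L^p$ local smoothing estimate for the averaging operator $A_u$ from \cite{KLO22,KLO23,BGHS} (and its vector-valued extension promised in Section \ref{sec.3}) can be applied. The decay $2^{-l\e_0}$ will be extracted from the smoothing gain, which is available precisely when $p>p_\circ(d)$, together with the support constraint imposed by the cutoff $\Upsilon$ in the definition of $B_l$.

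I would first exploit the anisotropic-dilation structure of $\sum_{j\in\Z} B_l(u\,\delta_j^\A(\xi))$, combined with a Littlewood-Paley almost-orthogonality argument, to reduce matters to bounding $\sup_{u\in[1,2]}|T_u^{(l)}f|$ acting on frequency-localized inputs. Since only $O(1)$ values of $j$ contribute to the multiplier at any given $\xi$, the outer $j$-sum can be handled by a standard orthogonality argument in $L^p$, and the rescaling argument then allows us to focus on a single unit interval in $u$.

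For fixed frequency scale and $u\in[1,2]$, I would invoke the Chang-Wilson-Wolff type inequality from Section \ref{sec.3} to dominate $\sup_{u\in[1,2]}|T_u^{(l)}f|$ by $|T_1^{(l)}f|$ plus a martingale-difference square function $\big(\sum_k |\Delta_k T_u^{(l)}f|^2\big)^{1/2}$ at dyadic scales $2^{-k}$ in $u$. The individual pieces are then rewritten using the crucial support property of $B_l$ noted in Section \ref{subss1}: for each $\xi\in{\rm supp}\,B_l$ there exist $l'\ne l''$ with $|\xi_{l'}|\sim|\xi_{l''}|\sim 2^l$. Localizing in $\xi$ via a partition of unity and inserting one-dimensional Littlewood-Paley projections $P_k^{(l')}$ and $P_k^{(l'')}$ yields a point-wise inequality in the spirit of (\ref{key}) that relates each piece of $T_u^{(l)}f$ to $A_u$ applied to a frequency-restricted function. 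The vector-valued $L^p$ local smoothing estimate for $A_u$, applied at frequency scale $2^l$, then produces a genuine power gain $2^{-l\alpha}$ for some $\alpha>0$ whenever $p>p_\circ(d)$; after absorbing the mild losses from the martingale step and the $(l',l'')$-partition, any $\e_0<\alpha$ yields the desired bound.

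The main obstacle I foresee is the pointwise reduction to $A_u$: because the distinguished pair $(l',l'')$ depends on $\xi$, one must decompose $B_l$ into pieces each localized where a fixed pair satisfies $|\xi_{l'}|,|\xi_{l''}|\sim 2^l$, and check that the resulting factorization through $A_u$ preserves the decay in $l$. Equally delicate is tracking the regularity gain from the local smoothing estimate through the martingale square function so that it survives as a true power-type gain $2^{-l\e_0}$ rather than being consumed by the auxiliary cutoffs introduced along the way.
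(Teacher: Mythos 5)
Your overall strategy coincides with the paper's: reduce to a single dyadic block in $u$ by rescaling, exploit the support property of $B_l$ to produce, for each $\xi$, a pair $l'\neq l''$ with $|\xi_{l'}|\sim|\xi_{l''}|\sim 2^l$ (this is exactly Lemma \ref{l9} and the point-wise inequality (\ref{key})), insert one-dimensional projections $\PP^{(l')},\bar{\PP}^{(l'')}$, and then feed the resulting square function into the vector-valued local smoothing estimate (Lemma \ref{l2}) to extract the power gain in $l$. The final summation over the pair of indices $(j,n)$ is closed in the paper by a double Littlewood--Paley inequality, which works precisely because $\A_{l'}\neq\A_{l''}$ makes the map $(j,n)\mapsto(j\A_{l'}-n,\,j\A_{l''}-n)$ essentially injective; you gesture at this but it deserves to be said explicitly, since a bare "$O(1)$ values of $j$ contribute" orthogonality argument does not suffice in $L^p$ for $p\neq 2$.

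The one genuine gap is your treatment of the supremum over $u\in[1,2]$. The Chang--Wilson--Wolff inequality of Proposition \ref{pp1} is a good-$\lambda$ inequality comparing level sets of a function and of its \emph{spatial} dyadic square function; it is the engine behind the $\sqrt{\log}$ bounds in Theorems \ref{t4.1} and \ref{1e}, but it is not a device for dominating $\sup_{v\in[1,2]}$ by a martingale square function in the $v$-variable, and the paper does not use it here. What the paper uses instead is the elementary Sobolev-type interpolation $\sup_{v}|g(v)|\les\|\chi g\|_{L^p_v}^{1-1/p}\|\tfrac{d}{dv}(\chi g)\|_{L^p_v}^{1/p}$, reducing matters to the two estimates (\ref{loc1}) and (\ref{loc2}). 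This step carries the entire quantitative burden: the $v$-derivative of the multiplier costs a factor $2^l$, of which the interpolation charges only the power $1/p$, and the local smoothing gain $2^{-l(1/p+\e)}$ is exactly enough to pay for it, leaving $2^{-l\e}$. Your proposal never verifies this balance, and with a cruder device (e.g.\ the fundamental theorem of calculus alone, which charges the full $2^l$) the argument would not close. You should replace the CWW step by this interpolation and check the exponent arithmetic explicitly.
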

\section{Auxiliary results}
\label{sec.3}
In this section, we will introduce the local smoothing estimate  for a generic averaging operator, an inequality of the
Chang-Wilson-Wolff type and a point-wise inequality for the martingale difference operator.
\subsection{Local smoothing estimate  and its  vector-valued extension}\label{3.1}
 Let $\Gamma:[-2,2]\to \R^d$ be a smooth  curve satisfying the non-degenerate condition
\begin{equation}\label{nond}
{\rm det}(\Gamma'(s),\Gamma''(s),\cdots,\Gamma^{(d)}(s))\neq 0\ \ {\rm on}\ [-2,2].
\end{equation}
  For every $l\in\N$, we denote by  $a_l(s,t,\xi)$ a smooth function on
$[-2,2]\times [1/2,4]\times \{\xi\in\R^d:\ |\xi|\sim 2^l\}$, which
satisfies  the symbol condition
\begin{equation}\label{nond1}
|\p_s^j\p_t^k\p_\xi^\beta a_l(s,t,\xi)|\les |\xi|^{-|\beta|}
\end{equation}
for all $\beta\in \N^d$ and all $(k,j)\in\N^2$. Then we define an integral operator by
\begin{equation}\label{ll12}
A^\Gamma[a_l]f(x,t):= \int_{\R^d} {\bf m}_l(\xi,t)\widehat{f}(\xi)e^{i\xi\cdot x}d\xi,\ {\rm where}\ {\bf m}_l(\xi,t)=\int e^{-it\Gamma(s)\cdot \xi} a_l(s,t,\xi) ds.
\end{equation}
We next gives a point-wise estimate of the multiplier ${\bf m}_l(\xi,t)$, which plays an important role in proving the desired estimate in  $L^2$ norm. To be more specifically,
 applying the method of stationary phase, we can infer from  (\ref{nond}) and (\ref{nond1}) that
 \beq\label{zq1}
 |{\bf m}_l(\xi,t)
|\les (1+t|\xi|)^{-1/d}.
\eeq
It is evident that when $d$ grows, the decay rate of the multiplier ${\bf m}_l(\xi,t)$  drops. Furthermore, this observation might suggest that the corresponding task in higher dimensions is more complex.
\vskip.1in
The related local smoothing estimate for $A^\Gamma[a_l]$ is then given, which will be employed in the proof of Theorem \ref{2e}.
\begin{lemma}\label{l1}
Let $d\ge 3$,   $I=[1/2,4]$ and $l\ge 0$. For each $p\in(p_\circ(d),\infty)$,
\begin{equation}\label{cls}
\|A^\Gamma[a_l]f\|_{L^p(\R^d\times I)}\les 2^{-l(1/p+\e_1)}\|f\|_{L^p(\R^d)}
\end{equation}
holds for some $\e_1>0$.
\end{lemma}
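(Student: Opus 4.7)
The plan is to deduce (\ref{cls}) from the sharp $L^p$ local smoothing estimate for the averaging operator over non-degenerate curves, proved in \cite{KLO22,BGHS} for $d=3$ and in \cite{KLO23} for $d\ge 4$. In the form convenient here, that result asserts that for the model averaging operator $\mathcal{A}^\Gamma_t f(x)=\int f(x+t\Gamma(s))\Psi_\circ(s)\,ds$ with a fixed compactly supported $C^\infty$ cutoff $\Psi_\circ$ on $[-2,2]$, and every $p>p_\circ(d)$,
\beq\label{lseq}
\|P_l\mathcal{A}^\Gamma_tf\|_{L^p(\R^d\times I)}\les 2^{-l\sigma}\|f\|_p
\eeq
holds for some $\sigma=\sigma(p,d)>1/p$. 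Setting $\e_1:=\sigma-1/p>0$ produces exactly the exponent in (\ref{cls}), so the task reduces to absorbing the general amplitude $a_l(s,t,\xi)$ into the framework of (\ref{lseq}).

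For this reduction I would first rescale $\eta:=\xi/2^l$: the symbol condition (\ref{nond1}) makes $\tilde a_l(s,t,\eta):=a_l(s,t,2^l\eta)$ a $C^\infty$ function on $[-2,2]\times[1/2,4]\times\{|\eta|\sim 1\}$ with all derivatives bounded uniformly in $l$. A Fourier series expansion on a slightly larger box then yields
$$\tilde a_l(s,t,\eta)=\sum_{\vec n\in\Z^{d+2}}c^{(l)}_{\vec n}\,e^{i(n_1 s+n_2 t+n_3\cdot\eta)},\qquad |c^{(l)}_{\vec n}|\les_N(1+|\vec n|)^{-N}\ \text{for every}\ N\in\N,$$
with the bound on $c^{(l)}_{\vec n}$ uniform in $l$. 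Substituting back into the definition of ${\bf m}_l$ represents it, after restoring the implicit $(s,t,\xi)$-cutoffs inherent in $a_l$, as a rapidly convergent sum over $\vec n$ of the multipliers
$$e^{in_2 t}\,e^{i2^{-l}n_3\cdot\xi}\int e^{-it\Gamma(s)\cdot\xi}\,\Psi_\circ(s)\,e^{in_1 s}\,ds.$$
Each summand is, up to an $L^p(\R^d)$-isometric $x$-translation by $-2^{-l}n_3$ and an $L^p(I)$-isometric $t$-modulation by $e^{in_2 t}$, precisely the multiplier of the standard averaging operator $\mathcal{A}^\Gamma_t$ with cutoff $\Psi_\circ(s)e^{in_1 s}$ in place of $\Psi_\circ(s)$.

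It then suffices to apply (\ref{lseq}) to each term and sum against $c^{(l)}_{\vec n}$. The main obstacle I expect is quantitative stability of (\ref{lseq}) under the replacement $\Psi_\circ\mapsto\Psi_\circ\,e^{in_1 s}$: one must verify that the implicit constant grows at most polynomially in $n_1$, so that this growth is dominated by the super-polynomial decay of $c^{(l)}_{\vec n}$. Since the proofs in \cite{KLO22,KLO23,BGHS} rest on decoupling and frequency-localized geometric arguments that depend only on finitely many derivatives of the $s$-amplitude, this dependence is standard and merely needs to be tracked through the reduction; once verified, summation yields (\ref{cls}) with the asserted $\e_1>0$.
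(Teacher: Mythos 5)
Your proposal rests on the assertion that the local smoothing estimate for the model averaging operator, with decay exponent $\sigma>1/p$, is available from \cite{KLO22,BGHS,KLO23} for \emph{every} $p\in(p_\circ(d),\infty)$, and this is exactly where the gap lies: the cited results do not cover that full range. For $d=3$, Theorem 1.2 of \cite{BGHS} gives (\ref{cls}) only for $3<p\le 4$; for $d\ge4$, \cite{KLO23} gives the estimate (\ref{ef1}) only for $p\in(4d-2,\infty)$, whereas $p_\circ(d)=2d-2$. Extending to the full range $(p_\circ(d),\infty)$ is the actual content of the lemma, and the paper does it by interpolation: for $d=3$ against the elementary bound $\|A^\Gamma[a_l]f\|_{L^\infty(\R^d\times I)}\les\|f\|_{L^\infty(\R^d)}$, obtained from a kernel estimate via integration by parts in $\xi$; and for $d\ge4$ against the $L^2$ bound $\|A^\Gamma[a_l]f\|_{L^2(\R^d\times I)}\les 2^{-l/d}\|f\|_{L^2(\R^d)}$, which follows from Plancherel's theorem together with the stationary phase bound $|{\bf m}_l(\xi,t)|\les(1+t|\xi|)^{-1/d}$ coming from the non-degeneracy condition (\ref{nond}). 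Your write-up contains neither of these auxiliary bounds nor any interpolation, so as it stands it assumes the conclusion in precisely the range of $p$ where it cannot simply be quoted.

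By contrast, the part of your argument that receives all the effort --- the Fourier-series expansion of $\tilde a_l(s,t,\eta)=a_l(s,t,2^l\eta)$ to reduce a general symbol to the model cutoff $\Psi_\circ$ --- is not needed: the theorems in \cite{BGHS,KLO23} are already formulated for amplitudes satisfying the symbol condition (\ref{nond1}), and the paper applies them to $A^\Gamma[a_l]$ directly. The expansion itself is a legitimate standard device (and your caveat about tracking the polynomial growth in $n_1$ of the constants under $\Psi_\circ\mapsto\Psi_\circ e^{in_1s}$ is the right thing to flag, as is the need to smoothly extend $\tilde a_l$ from the annulus $\{|\eta|\sim1\}$ to a box before expanding), but it resolves a non-issue while the genuine issue, the range of $p$, is left unaddressed. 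To repair the argument, keep your choice $\e_1=\sigma-1/p$ in the ranges where the references apply, and supply the two interpolation steps above to reach all $p>p_\circ(d)$.
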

\begin{proof}[Proof of Lemma \ref{l1}]
We first prove the $L^\infty$ estimate for $A^\Gamma[a]f$, that is,
 \begin{equation}\label{triv}
\|A^\Gamma[a_l]f\|_{L^\infty(\R^d\times I)}\les \|f\|_{L^\infty(\R^d)}.
 \end{equation}
Write $A^\Gamma[a_l]$ as a convolution operator below
 $$A^\Gamma[a_l]f(x,t)=\int_{-2}^2\int_{\R^d}K_{s,t}^l(x-y) f(y)dy ds,\ \ (x,t)\in \R^d\times I,$$
 where $K_{s,t}^l(x)=\int e^{i(x-t\Gamma(s))\cdot \xi} a_l(s,t,\xi)d\xi$.
 By
changing  the variable $\xi\to 2^l\xi$ and  integrating  by parts in $\xi$ not less than $d+1$ times,  we deduce from    (\ref{nond1}) that
 $$|K_{s,t}^l(x)|=
2^{ld}|\int e^{i2^l(x-t\Gamma(s))\cdot \xi} a_l(s,t,2^l\xi)d\xi|\les
\frac{2^{ld}}{(1+2^l|x-t\Gamma(s)|)^{d+1}}.$$
Then (\ref{triv}) follows from
$|\int_{\R^d}K_{s,t}^l(x-y) f(y)dy |\les \|f\|_\infty$ and $|I|\les1$.
\vskip.1in
For $d=3$, (\ref{cls}) for $3<p\le 4$ was proved in \cite{BGHS} (see Theorem 1.2 there).   Interpolating this with  (\ref{triv}), we  obtain (\ref{cls}) for $p\in(p_\circ(d),\infty)$  since $p_\circ(3)=3$.
For $d\ge 4$, Ko-Lee-Oh in \cite{KLO23}
 proved  that for $p\in (4d-2,\infty)$,
 \begin{equation}\label{ef1}
\|A^\Gamma[a_l]f\|_{L^p(\R^d\times I)}\les 2^{-l({2}/{p}-\e)}\|f\|_{L^p(\R^d)}
 \end{equation}
 holds for any $\e>0$.
Using (\ref{pdd}), we then see that (\ref{cls}) is a direct  consequence  by interpolating (\ref{ef1}) with
 \begin{equation}\label{ef2}
\|A^\Gamma[a_l]f\|_{L^2(\R^d\times I)}\les 2^{-{l}/{d}}\|f\|_{L^2(\R^d)}.
 \end{equation}
 Next, we prove  (\ref{ef2}).
By
 Plancherel's theorem,  we obtain
 \begin{equation}\label{ef22}
\|A^\Gamma[a_l]f\|_{L^2(\R^d\times I)}=(\int_{1/2}^4\|{\bf m}_l(\xi,t)
 \widehat{f}(\xi)\|_{L^2_\xi}^2dt)^{{1}/{2}},
 \end{equation}
which yields (\ref{ef2})  by inserting  (\ref{zq1}) into (\ref{ef22}). This completes the proof of Lemma \ref{l1}.
 \end{proof}
 Below we shall state  the square-function inequality with respect to anisotropic dilations, which  plays  a crucial role in the proof of Theorem \ref{t4.1}.  For each $j\in\Z$, we define the operator $T_{l,j}$ by
 $$\widehat{T_{l,j}f}(\xi,t)={\bf m}_l\big(2^l\delta_j^\A(\xi),t\big)\widehat{f}(\xi),\ l\ge 1,$$
 where ${\bf m}_l$ and $\delta_j^\A$ are   defined by (\ref{ll12}) and (\ref{v1}) for $b=j$, respectively.  By rescaling, Lemma \ref{l1} gives  that
 \beq\label{wan1}
 \|T_{l,j}f\|_{L^p(\R^d\times I)}\les  2^{-l({1}/{p}+\e_1)}\|f\|_{L^p(\R^d)},\ \ \ p\in(p_\circ(d),\infty).
 \eeq
 \begin{lemma}\label{l2}
 Let $d\ge 3$, $l\ge 1$ and $p\in(p_\circ(d),\infty)$.  Then there is an $\e>0$ such that
 \begin{equation}\label{smee}
 \|(\sum_{j\in\Z}|T_{l,j}f_j|^2)^{1/2}\|_{L^p(\R^d\times I)}\les  2^{-l({1}/{p}+\e)}\|(\sum_{j\in\Z}|f_j|^2)^{1/2}\|_{L^p(\R^d)}.
\end{equation}
 \end{lemma}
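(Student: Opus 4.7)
The plan is to upgrade the scalar local smoothing bound \eqref{wan1} to its $\ell^2$-valued analogue by a three-stage argument: a cheap $L^2(\ell^2)$ estimate via Plancherel, a sharper $L^{p_1}(\ell^2)$ estimate for a distinguished $p_1>p_\circ(d)$, and complex interpolation between the two.

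For the first stage, fix $p=2$. By Fubini, Plancherel, and the pointwise bound \eqref{zq1},
\[
\|(\sum_j|T_{l,j}f_j|^2)^{1/2}\|_{L^2(\R^d\times I)}^2=\sum_j\|T_{l,j}f_j\|_{L^2(\R^d\times I)}^2\les 2^{-2l/d}\sum_j\|f_j\|_2^2,
\]
so the $L^2(\ell^2)\to L^2(\ell^2)$ operator norm is $\les 2^{-l/d}$. Although this is weaker than the target rate $2^{-l(1/2+\e)}$ at $p=2$, it is adequate as an interpolation endpoint because the range of actual interest, $p>p_\circ(d)\ge d$, forces $1/p<1/d$, so the interpolated decay will eventually beat $2^{-l/p}$.

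For the second stage, I take $p_1$ slightly above $p_\circ(d)$ (for instance $p_1=4$ when $d=3$ and $p_1>4d-2$ when $d\ge 4$) so that the scalar local smoothing estimates of \cite{BGHS}, respectively \cite{KLO23}, deliver rate $2^{-l(2/p_1+\e')}$. The goal is the $\ell^2$-valued analogue
\[
\|(\sum_j|T_{l,j}f_j|^2)^{1/2}\|_{L^{p_1}(\R^d\times I)}\les 2^{-l(2/p_1+\e'')}\|(\sum_j|f_j|^2)^{1/2}\|_{L^{p_1}(\R^d)} .
\]
The strategy is to exploit the anisotropic rescaling identity $T_{l,j}=D_j^+T_{l,0}D_j^-$, where $D_j^\pm$ are the anisotropic dilations built from $\delta_j^{\pm\A}$, together with Littlewood-Paley projections $\Pi_j$ adapted to $\{\delta_j^\A\}$ that localise each $f_j$ to the shell $\{|\delta_j^\A(\xi)|\sim 1\}$. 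Inserting the $\Pi_j$ costs nothing, since the Fefferman-Stein vector-valued maximal inequality gives $\|(\sum_j|\Pi_jf_j|^2)^{1/2}\|_p\les\|(\sum_j|f_j|^2)^{1/2}\|_p$. After this reduction the Fourier supports of the $T_{l,j}f_j$'s are almost disjoint in the anisotropic sense, and the desired $\ell^2$-valued bound follows from the $\ell^2$-valued versions of the decoupling or $L^p$-$L^q$ smoothing estimates of \cite{BGHS,KLO23}; such a vector-valued extension is intrinsic to those proofs, which ultimately rest on the $\ell^p$-decoupling inequality of \cite{BD15}, itself a square-function statement.

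Complex interpolation between the two endpoints then gives \eqref{smee} with some $\e=\e(p)>0$ for every $p\in(p_\circ(d),\infty)$; the computation parallels the scalar interpolation in the proof of Lemma \ref{l1}, the only new wrinkle being that we interpolate $L^{p_1}(\ell^2)$ against $L^2(\ell^2)$ rather than the scalar spaces. The \emph{main obstacle} lies in the second stage: summing the scalar bound \eqref{wan1} over $j$ naively only yields an $L^{p_1}(\ell^{p_1})$-estimate, and for $p_1>2$ this norm is not comparable to $L^{p_1}(\ell^2)$ in the direction we need. Overcoming this requires both the anisotropic Littlewood-Paley reduction sketched above and a genuine $\ell^2$-valued extraction from the decoupling-based proofs of the scalar local smoothing theorems; keeping all the implicit constants uniform in $j$ through the rescalings $\delta_j^\A$ is where the bookkeeping is most delicate.
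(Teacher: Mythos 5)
Your scaffolding --- an $L^2(\ell^2)$ bound from Plancherel and (\ref{zq1}), a sharper $L^{p_1}(\ell^2)$ bound for $p_1$ near $p_\circ(d)$, and interpolation --- has the right shape, and the first and third stages are sound. The genuine gap is in the second stage, which is where all the content of the lemma lives. Almost-disjointness of the anisotropic Fourier supports of the $T_{l,j}f_j$ does not, for $p_1\neq 2$, yield the forward square-function estimate: by Khintchine's inequality the claim is equivalent to bounding the single multiplier $\sum_j \epsilon_j\, {\bf m}_l(2^l\delta_j^\A(\xi),t)$ on $L^{p_1}(\R^d)$ uniformly in the signs $\epsilon_j=\pm1$, and knowing that each summand defines a bounded operator on $L^{p_1}$ with norm $\les 2^{-l(1/p_1+\e_1)}$ does not by itself imply the same for the sum --- that implication is precisely the nontrivial step. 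Your fallback, that an $\ell^2$-valued extension is ``intrinsic'' to the decoupling-based proofs of \cite{BGHS,KLO23}, is asserted rather than proved and would amount to reworking those arguments from scratch; you correctly flag this as the main obstacle, but you do not overcome it.

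The paper closes exactly this gap without reopening the local smoothing proofs. It first splits $\Z$ into $n_\circ$ arithmetic progressions so that within each progression the anisotropic annuli $\{\xi:\ |\de_j^\A(\xi)|\sim 1\}$ are genuinely pairwise disjoint, then randomizes with Rademacher functions and applies Seeger's theorem on Mihlin-H\"{o}rmander-type multipliers assembled from dyadic pieces (Proposition 3.5 in \cite{Guo20}, going back to \cite{Se88}): given the pointwise bound (\ref{zq1}), the rescaled scalar estimate (\ref{wan1}) for each piece, and crude derivative bounds of size $2^{l(d+1)}$, that theorem bounds the full randomized multiplier on $L^p$ at the cost of an arbitrarily small loss in the exponential rate, and Khintchine's inequality then returns the square function. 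Replacing your appeal to ``intrinsic vector-valued decoupling'' by this randomization-plus-Seeger step is the missing ingredient. A secondary point: your interpolation only reaches $p\in(2,p_1)$, whereas the Seeger route gives all $p\in(p_\circ(d),\infty)$ at once; to cover large $p$ (and the case $d=3$, where \cite{BGHS} only supplies $3<p\le 4$ and Lemma \ref{l1} borrows the trivial $L^\infty$ bound) you would still need to specify a vector-valued endpoint there.
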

 \begin{proof}[Proof  of Lemma \ref{l2}]
  We denote  $\CC_j=\{\xi\in \R^d:\ 2^{-1}\le |\de_j^\A(\xi)|\le 2\}$, which satisfies that there is a positive  integer $n_\circ$  such that
 $\CC_j\cap \CC_{j'}=\emptyset$ whenever $|j-j'|\ge n_\circ$. Then, we  split the set of integers $\Z$ into $n_\circ$ sets,
  denoted as $\Lambda_0,\cdots,\Lambda_{n_\circ-1}$.  These subsets are defined as follows:
 $$\Lambda_k:=\{j\in\Z:\ j\equiv k\  ({\rm mod}\  n_\circ)\},\ \ k=0,1,\cdots,n_\circ-1.$$
 Consequently,  (\ref{smee})  follows from Minkowski's inequality  if
 \begin{equation}\label{go}
\|\big(\sum_{j\in \Lambda_k}|T_{l,j}f_j|^2\big)^{1/2}\|_{L^p(\R^d\times I)}
 \les 2^{-l(\e+\frac{1}{p})}\|(\sum_{j\in\Z}|f_j|^2)^{1/2}\|_{L^p(\R^d)}\ {\rm for\ all\ }   k=0,\cdots,{n_\circ-1}.
 \end{equation}
 We only prove (\ref{go}) for the case $k=0$, namely,
  \begin{equation}\label{sme}
\|\big(\sum_{j\in \Lambda_0}|T_{l,j}f_j|^2\big)^{1/2}\|_{L^p(\R^d\times I)}\les  2^{-l({1}/{p}+\e_0)}\|(\sum_{j\in\Z}|f_j|^2)^{1/2}\|_{L^p(\R^d)},
\end{equation}
 since $1\le k\le n_\circ-1$ can be treated analogously.
 Let $\{r_i(\tau)\}_{i=0}^\infty$ be the sequence of Rademacher functions (see, e.g.,  \cite{GG14}) on $[0,1]$,  which satisfy  that   for every $q\in(0,\infty)$,
 \begin{equation}\label{ab09}
 \|\sum_{i=0}^\infty z_i r_i(\tau)\|_{L^q_\tau([0,1])}\sim (\sum_{i=0}^\infty|z_i|^2)^{1/2},
 \end{equation}
and let 
  ${\bf m}_\tau^l(\xi,t):=\sum_{k=0}^\infty r_k(\tau){\bf m}_l(2^l\delta_{n_\circ k}^\A(\xi),t)$. By a routine computation,
we can obtain  from   (\ref{zq1}) and (\ref{wan1}) that for $p\in(p_\circ(d),\infty)$ and $s>0$,
$$
\begin{aligned}
|{\bf m}_\tau^l(\xi,t)|\les&\  2^{-l/d},\\
\|\F^{-1}\{\psi(|\cdot|) {\bf m}_\tau^l({\bar\delta}_s \cdot,t)\hat{f}\}\|_p\les&\  2^{-l(1/p+2\e)}\|f\|_p\ \ {\rm for\ some\ small}\ \e>0\ {\rm and}\\
|\p_\xi^\beta\big(\psi (|\xi|) {\bf m}_\tau^l({\bar\delta}_s \xi,t)\big)|\les&\  2^{l(d+1)}
\ \ \ \ \hskip.4in\ {\rm for\ any}\ |\beta|\le d+1,
\end{aligned}
$$
where ${\bar\delta}_s$ is defined by ${\bar\delta}_s:=\exp(s\log{\mathcal{P}})$  with  the  matrix $\mathcal{P}$ whose eigenvalues have positive real parts.
Applying Proposition 3.5 in \cite{Guo20} (see \cite{Se88} for the detailed proof) to the multiplier ${\bf m}_\tau^l(\xi,t) $
 and  the function $F$ defined by $\widehat{F}(\xi)=\sum_{k=0}^\infty \psi(\delta_{n_\circ k}^\A(\xi))\widehat{f_{n_\circ k}}(\xi)$ which satisfies
 $\|F\|_p\les \|(\sum_{k=0}^\infty|f_{n_\circ k}|^2)^{1/2}\|_p$,
     we can infer by taking the $L^p_\tau([0,1])$ norm on both sides of the resulting inequality that for $p\in(p_\circ(d),\infty)$,
 \begin{equation}\label{1q1}
 \begin{aligned}
 \big(\int_0^1 \int_{1/2}^2\|\F^{-1}\{{\bf m}_\tau^l(\cdot,t)\widehat{F}\}\|_{L^p(\R^d)}^p\  dt d\tau\big)^{1/p}
 \les&\  2^{-l(1/p+\e)}\|(\sum_{k=0}^\infty|f_{n_\circ k}|^2)^{1/2}\|_{L^p(\R^d)}.
 \end{aligned}
 \end{equation}
  Note that   $T_{l,n_\circ k}F=T_{l,n_\circ k}f_{n_\circ k}$ for all $k=0,1,\cdots$.  Using
  Fubini's theorem  and (\ref{ab09}) with $q=p$ to (\ref{1q1}), we have
   \begin{equation}\label{sme1}
\|\big(\sum_{k=0}^\infty |T_{l,n_\circ k}f_{n_\circ k}|^2\big)^{1/2}\|_{L^p(\R^d\times I)}\les  2^{-l(1/p+\e)}\|(\sum_{k=0}^\infty|f_{n_\circ k}|^2)^{1/2}\|_{L^p(\R^d)}.
\end{equation}
 Analogously,  following the above arguments, we may deduce
   \begin{equation}\label{sme2}
\|\big(\sum_{k=0}^\infty |T_{l,-n_\circ k}f_{-n_\circ k}|^2\big)^{1/2}\|_{L^p(\R^d\times I)}\les  2^{-l(1/p+\e)}\|(\sum_{k=0}^\infty|f_{-n_0k}|^2)^{1/2}\|_{L^p(\R^d)}.
\end{equation}
Note
$\Lambda_0=n_\circ \Z$. Finally,
 (\ref{sme}) follows by combining (\ref{sme1}) and (\ref{sme2}).
  \end{proof}
 \subsection{An inequality of the Chang-Wilson-Wolff type}\label{subCWW}
 For $j\in\Z$, we denote by $\Da_j^{(d)}$ the set of all dyadic cubes in $\R^d$ whose side length is $2^{-j}$.\footnote{The intervals are of the form $\prod_{i=1}^d[n_i2^{-j},(n_i+1)2^{-j})$ with $n=(n_1,\cdots,n_d)\in\Z^d$.} We define
 the conditional expectation of a locally integrable function $f$ on $\R^d$  by
$$E_jf(x)=\sum_{Q\in \Da_j^{(d)}} \frac{1}{|Q|}\int_Q f(y) dy \chi_Q(x),$$
and we write    the dyadic martingale difference operator $D_j$ and
 the dyadic square function $\mathfrak{D}f(x)$  by
$$D_jf(x)=E_{j+1}f(x)-E_jf(x) \hskip.2in {\rm and} \hskip.2in  \mathfrak{D}f(x)=(\sum_{j\in\Z}|D_jf(x)|^2)^{1/2}.$$
For $1\le l\le d$, $x\in\R^d$ and $y\in\R$, we let  ${\hat x}_l(y)$  denote the vector in $\R^d$  produced by
$x$ with the $l$-th component  $x_l$ replaced by $y$.
Analogously, we denote  $E_j^{(l)}$,  $D_j^{(l)}$ and  $\mathfrak{D}^{(l)}f$   by 
$$
\begin{aligned}
E_j^{(l)}f(x)=&\ \sum_{Q\in \Da_j^{(1)}} \frac{1}{|Q|}\int_Q f\left({\hat x}_l(y)\right) dy\chi_Q(x),\\
D_j^{(l)}f(x)=&\ E_{j+1}^{(l)}f(x)-E_j^{(l)}f(x)\ \ {\rm and}\ \  \mathfrak{D}^{(l)}f(x)=(\sum_{j\in\Z}|D_j^{(l)} f(x)|^2)^{1/2},
\end{aligned}$$
where
$\Da_j^{(1)}$ stands for the set of all dyadic intervals of length $2^{-j}$.
  Let $M$ denote the Hardy-Littlewood maximal operator, and let $M^{(l)}$ $(1\le l\le d)$ represent  the Hardy-Littlewood maximal operator in the $l$-th variable. For $\kappa\in \Z^+$, $1<q<\infty$ and $1\le l\le d$, we denote
$$M^\kappa:=\underbrace{M \circ M\circ\cdots\circ M}_{\kappa \ {\rm times}},\hskip.3in M_q(f)(x):=\big(M(|f|^q)\big)^{1/q}\hskip.1in {\rm and}\ \  M^{(l)}_q(f):= \big(M^{(l)}(|f|^q)\big)^{1/q}.$$
The following  forms of the Chang-Wilson-Wolff  inequality \cite{CWW85} play a crucial role in the proofs of Theorems \ref{1e} and \ref{t4.1}.
In particular, in Section \ref{s4}, we shall frequently  use the second inequality in  (\ref{a.2}).
 \begin{prop}\label{pp1}
Suppose that $f\in L^p(\R^d)\cap L^\infty(\R^d)$ for some $p<\infty$. Then there are two universal constants
$c_1$ and $c_2$ such that for all $\lambda>0$ and all $\e\in (0,1/2)$,
\begin{align}\label{a.2}
\begin{aligned}
|\{x\in\R^d:\ |f(x)|>4\lambda,\ \mathfrak{D}f(x)\le \e \lambda\}|
\le&\  c_2e^{-c_1\e^{-2}}|\{x\in\R^d:\ Mf(x)>\lambda\}|,\\
|\{x\in\R^d:\ |f(x)|>4\lambda,\ \mathfrak{D}^{(l)}f(x)\le \e \lambda\}|
\le&\ c_2e^{-c_1\e^{-2}}|\{x\in\R^d:\ M^{(l)}f(x)>\lambda\}|.
\end{aligned}
\end{align}
\end{prop}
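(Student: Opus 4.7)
The plan is to prove both inequalities in (\ref{a.2}) by the standard good-$\lambda$ stopping-time strategy, with the classical exponential-square bound of \cite{CWW85} for dyadic martingales as the single nontrivial analytic input. First, I would pass from the Hardy-Littlewood maximal operator to the dyadic maximal operator $M_df(x):=\sup_{j\in\Z}|E_jf(x)|$, using the pointwise comparison $M_df\les Mf$; this lets me replace $\{Mf>\lambda\}$ on the right-hand side by its dyadic analogue $\{M_df>\lambda\}$ at the cost of a dimensional constant absorbed into $c_2$.

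Next, I would perform a Calder\'on-Zygmund decomposition of the open set $\Omega:=\{M_df>\lambda\}$ into a disjoint union of maximal dyadic cubes $\{Q_k\}$, $Q_k\in \Da_{j_k}^{(d)}$. By maximality, the parent $\widetilde{Q_k}$ of $Q_k$ satisfies $|\widetilde{Q_k}|^{-1}|\int_{\widetilde{Q_k}}f|\le \lambda$, so $|E_{j_k-1}f(x)|\le\lambda$ for every $x\in Q_k$. Writing
$$f(x)=E_{j_k-1}f(x)+\sum_{j\ge j_k-1}D_jf(x),\qquad x\in Q_k,$$
we see that $|f(x)|>4\lambda$ forces $|\sum_{j\ge j_k-1}D_jf(x)|>3\lambda$. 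I would then apply the Chang-Wilson-Wolff exponential-square inequality to the stopped martingale $g_k:=\chi_{Q_k}\sum_{j\ge j_k-1}D_jf$ (with respect to the filtration induced by the dyadic cubes inside $Q_k$), whose square function at $x\in Q_k$ is dominated pointwise by $\mathfrak{D}f(x)$. This yields universal constants $c_1,c_2$ with
$$|\{x\in Q_k:\ |g_k(x)|>3\lambda,\ \mathfrak{D}f(x)\le \e\lambda\}|\le c_2 e^{-c_1/\e^2}|Q_k|,$$
and summing in $k$, together with $\sum_k|Q_k|=|\Omega|\les |\{Mf>\lambda\}|$, gives the first line of (\ref{a.2}).

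For the second line I would freeze the coordinates $\{x_i\}_{i\neq l}$ and apply the one-dimensional version of the above argument to the slice $y\mapsto f(\hat x_l(y))$. Since $E_j^{(l)}$, $D_j^{(l)}$, $M^{(l)}$ and $\mathfrak{D}^{(l)}$ all act only in the $x_l$-variable, the stopping-time decomposition and the CWW exponential-square bound reduce on each fiber to their one-dimensional analogues; integrating the slice-wise estimate in the remaining $(d-1)$ variables by Fubini then produces the desired measure inequality. The only delicate verification is that the square function of the stopped martingale on $Q_k$ is controlled by $\mathfrak{D}f$ (respectively $\mathfrak{D}^{(l)}f$) pointwise---this follows because $\chi_{Q_k}D_jf$ coincides, for $j\ge j_k-1$, with the martingale difference of the restriction of the filtration to $Q_k$. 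I do not expect any genuine obstacle: the CWW exponential-square inequality is invoked as a black box, and the remaining steps are a textbook good-$\lambda$ argument.
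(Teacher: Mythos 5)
Your argument is correct and is precisely the standard Chang--Wilson--Wolff good-$\lambda$ argument that the paper itself invokes by deferring, without proof, to Proposition 3.1 of \cite{Guo20}; the stopping-time decomposition into maximal dyadic cubes, the telescoping identity $f=E_{j_k-1}f+\sum_{j\ge j_k-1}D_jf$ on each $Q_k$, and the fiberwise reduction for $\mathfrak{D}^{(l)}$ are all as intended. The one imprecision is the first reduction: $M_df\les Mf$ places the dimensional constant \emph{inside} the level set (one naturally ends with $|\{Mf>c_d\lambda\}|$ on the right rather than a constant multiple of $|\{Mf>\lambda\}|$), but since every use of (\ref{a.2}) in the paper only exploits the $L^p$ boundedness of the maximal operator, this is harmless.
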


Since this proposition can be shown following the proof of  Proposition 3.1 in \cite{Guo20},
 we omit  its proof.
 \subsection{Useful point-wise inequalities}
 \label{sub3.3}
The following lemmas provide  crucial point-wise estimates of $E_j$, $D_j$, $\mathfrak{D}$ and $\mathfrak{D}^{(l)}$.

\begin{lemma}\label{chu}
Let $d\ge1$, $j\in\Z$ and   $f\in L^1(\R^d)+L^\infty(\R^d)$. Then the following inequalities hold.\\
(1) For $q\ge1$ and $n\ge 0$,
$$E_j\big(\F^{-1}\{{\psi}(2^{-j-n}|\cdot|)\widehat{f}\}\big)(x)
\les 2^{-n(1-1/q)}M_q(Mf)(x).$$
(2) For $n\ge 0$, we have
$$D_j\big(\F^{-1}\{{\psi}(2^{-j+n}|\cdot|)\widehat{f}\}\big)(x)
\les 2^{-n}M^2f(x).$$
\end{lemma}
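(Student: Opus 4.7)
The plan is to establish both inequalities via pointwise kernel bounds, exploiting the Schwartz decay and the mean-zero cancellation of the Littlewood--Paley convolution kernel. Setting $\eta(\xi):=\psi(|\xi|)$, one has $\F^{-1}\{\psi(2^{-k}|\cdot|)\widehat{f}\} = f * K_k$ with $K_k(x) = 2^{kd}\check\eta(2^k x)$. Because $\psi(0)=0$ and $\eta$ is radial, $\check\eta$ is a Schwartz function that is even in each coordinate and satisfies $\int_{\R^d}\check\eta\,du=0$.

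For part~(2), with $g:=P_{j-n}f$, the frequency localization of $\widehat{g}$ in $|\xi|\sim 2^{j-n}$ yields the pointwise bound $|\nabla g(y)| = |f * \nabla K_{j-n}(y)| \les 2^{j-n}Mf(y)$. For $x\in Q\in \Da_j^{(d)}$ with child cube $Q'\in \Da_{j+1}^{(d)}$ containing $x$, I decompose $D_j g(x) = 2^{-d}\sum_{Q''\subset Q}(a_{Q'}-a_{Q''})$, where $a_R:=|R|^{-1}\int_R g$, and write each sibling as $Q''=Q'+\tau$ with $|\tau|\les 2^{-j}$. The fundamental theorem of calculus then gives
$$|a_{Q'}-a_{Q''}| \le \tfrac{|\tau|}{|Q'|}\int_0^1\!\!\int_{Q'+s\tau}|\nabla g|\,dy\,ds \les 2^{-j}\,M(|\nabla g|)(x) \les 2^{-n}M^2 f(x),$$
which is~(2).

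For part~(1), the representation
$$E_j(f*K_{j+n})(x) = \int_{\R^d} f(z)\, L_Q(z)\,dz, \qquad L_Q(z) := 2^{jd}\int_{2^{j+n}(Q-z)}\check\eta(u)\,du$$
(from the substitution $u=2^{j+n}(y-z)$) reduces the task to the pointwise kernel bound $|L_Q(z)|\les 2^{jd}\cdot 2^{-n}(1+2^j\,\mathrm{dist}(z,Q))^{-N}$ for $N$ large. I split $z$ into three regimes: (i) $\mathrm{dist}(z,Q)\gtrsim 2^{-j}$, where the cube $2^{j+n}(Q-z)$ is far from the origin and Schwartz decay of $\check\eta$ applies directly; (ii) $z$ in the deep interior of $Q$, where $2^{j+n}(Q-z)$ contains a $2^n$-neighborhood of the origin and the identity $\int_{\mathrm{cube}}\check\eta = -\int_{\mathrm{complement}}\check\eta$ combined with Schwartz decay gives $|L_Q(z)|\les 2^{jd-Nn}$; (iii) $z$ in a boundary shell of width $\sim 2^{-j-n+k}$, $k\ge 0$, the delicate case. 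Once the kernel bound is in hand, integrating against $|f|$ over annular regions via $\int_{|z-c_Q|\le R}|f|\,dz\les R^d Mf(x)$ telescopes to $|E_j(f*K_{j+n})(x)|\les 2^{-n}Mf(x)$. Since $Mf\le M_q(Mf)$ a.e.\ and $2^{-n}\le 2^{-n(1-1/q)}$ for $q\ge 1$, (1) follows.

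The main obstacle lies in regime~(iii) of part~(1): when $z$ sits within $\sim 2^{-j-n+k}$ of $\p Q$, a face of $2^{j+n}(Q-z)$ passes within distance $\sim 2^k$ of the origin, so pure Schwartz decay on the complement fails. Here I will exploit the evenness of $\check\eta$ in each coordinate, inherited from the radial form of $\eta$: the one-dimensional marginal $g_1(u_1) := \int_{\R^{d-1}}\check\eta(u_1,u')\,du'$ is an even Schwartz function with $\int g_1 = \eta(0) = 0$, so $\int_0^\infty g_1 = 0$ and $|\int_0^\beta g_1|\les \min(\beta,\beta^{1-N})$. With $\beta = 2^k$, summing this shell contribution over $k\ge 0$ produces the overall $2^{-n}$ decay that is needed to close the estimate.
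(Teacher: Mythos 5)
Your argument for part (2) is correct and complete: the identity $D_jg=2^{-d}\sum_{Q''\subset Q}(a_{Q'}-a_{Q''})$, the translation/FTC step, and the gradient bound $|\nabla P_{j-n}f|\les 2^{j-n}Mf$ together give exactly the stated $2^{-n}M^2f(x)$. (The paper instead factors the multiplier through a compactly supported kernel with cancellation and cites Sublemma~4.2 of \cite{GHS06}, which handles both parts at once; your part (2) is a legitimate self-contained substitute.)

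Part (1), however, has a genuine gap, and it sits precisely in the regime you flag as delicate. The kernel bound you aim for, $|L_Q(z)|\les 2^{jd}\,2^{-n}(1+2^j\mathrm{dist}(z,Q))^{-N}$, is false on the shell of width $\sim 2^{-j-n}$ around $\p Q$: there a face of the rescaled cube $2^{j+n}(Q-z)$ passes at distance $O(1)$ from the origin, and the relevant truncated integral of $\check\eta$ (in your notation $\int_0^\beta g_1$ with $\beta\sim 1$) is a generically nonzero constant --- your own estimate $|\int_0^\beta g_1|\les\min(\beta,\beta^{1-N})$ gives no smallness at $\beta\sim 1$. Consequently the intermediate conclusion $|E_j(f*K_{j+n})(x)|\les 2^{-n}Mf(x)$ is not just unproved but false. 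Take $d=1$, $Q=[0,2^{-j})$, $x$ the center of $Q$, and $f=\chi_{[0,2^{-j-n}]}$: then $Mf(x)\sim 2^{-n}$, while $E_j(f*K_{j+n})(x)=2^j\int_0^{2^{-j-n}}\big(\int_0^{2^{j+n}z}\check\eta\big)dz+O(2^{-Nn})\sim c\,2^{-n}$ with $c=\int_0^1\int_0^a\check\eta\,du\,da\neq 0$ for generic $\psi$, so the ratio is $\sim 1$ rather than $\les 2^{-n}$. This is exactly why the lemma is stated with the weaker gain $2^{-n(1-1/q)}$ and the larger majorant $M_q(Mf)$: on the bad shell the only available smallness is its measure $\sim 2^{-jd}2^{-n}$, which must be converted via H\"older's inequality, $\int_{\mathrm{shell}}|f|\le|\mathrm{shell}|^{1-1/q}\big(\int_{B(x,C2^{-j})}|f|^q\big)^{1/q}\les 2^{-jd}\,2^{-n(1-1/q)}M_qf(x)$, at the cost of replacing $Mf$ by $M_qf$. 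If you insert this H\"older step in regime (iii) (and keep your regimes (i)--(ii) as they are), the proof closes and yields the stated bound; as written, the claimed pointwise kernel estimate and the final $2^{-n}Mf(x)$ bound cannot be repaired.
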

\begin{proof}
Let $\phi$ be the smooth function  given as in  Subsection \ref{sub2.1}, and
let $\phi_\circ(x)=\phi(16|x|)$. We define a function $\zeta$
by
$\check{\zeta}(x)=\phi_\circ(x)-2^d\phi_\circ(2x)$. So $\check{\zeta}$ is a smooth even function satisfying
supp $\check{\zeta}\subset\{x\in\R^d:|x|\le 1/8\}$, $\zeta(0)=0$ and  $|\zeta|\ge c$
in $\{\xi \in\R^d:1/8\le |\xi|\le 8\}$ for some $c>0$ (by the uncertainty principle).
Setting $\tilde{\psi}(\xi):=\frac{\psi_\circ(|\xi|)}{\zeta^2(\xi)}\in C_0^\infty(\R^d)$, we then have
 $\tilde{\psi}(\xi)\zeta^2(\xi)=1$ on  supp$\psi$, which clearly implies that  $\psi(|\xi|)\tilde{\psi}(\xi)\zeta^2(\xi)=\psi(|\xi|)$. Thus, there is a Schwartz function $W$ such that
\begin{equation}\label{am1}
{\psi}(2^{-j-n}|\xi|)\widehat{f}(\xi)=\zeta(2^{-j-n}\xi)\F\big\{W_{j+n}*f\big\}(\xi),
\end{equation}
where $ W_{j+n}(x)=2^{d(j+n)}W(2^{j+n}x)$ satisfies  $|W_{j+n}*f|(x)\les Mf(x)$.
Following the foregoing preparations, we deduce by using Sublemma 4.2 in   \cite{GHS06}  and (\ref{am1}) that
\begin{equation}\label{p09}
\begin{aligned}
E_j\big(\F^{-1}\{{\psi}(2^{-j-n}|\cdot|)\widehat{f}\}\big)(x)\les&\  2^{-n(1-1/q)}M_q( W_{j+n}*f)(x)\ {\rm and}\\
D_j\big(\F^{-1}\{{\psi}(2^{-j+n}|\cdot|)\widehat{f}\}\big)(x)
\les&\ 2^{-n} M(W_{j+n}*f)(x).
\end{aligned}
\end{equation}
At last, we may finish the proof of Lemma  \ref{chu} by using $|W_{j+n}*f|(x)\les Mf(x)$ to each inequality in  (\ref{p09}).
\end{proof}

We can deduce from Lemma \ref{chu} that for $q\in (1,\infty)$ and  $n\in\Z$,   there is an $\e\in (0,1-1/q)$ such that
\begin{equation}\label{am2}
\begin{aligned}
D_j\big(\F^{-1}\{{\psi}(2^{-j+n}|\cdot|)\widehat{f}\}\big)(x)
\les&\  2^{-|n|\e}M_q(Mf)(x)\ \ {\rm and}\ \\
D_j^{(l)}\big(\F^{-1}\{{\psi}(2^{-j+n}\xi_l)\widehat{f}(\xi)\}\big)(x)
\les&\  2^{-|n|\e}M_q^{(l)}(M^{(l)}f)(x),\ l=1,\cdots,d,
\end{aligned}
\end{equation}
which will be applied to  the following lemma. Clearly, the smooth function $\psi$ in (\ref{am2}) can be replaced by any smooth function supported in $\{\xi_l\in \R: |\xi_l|\sim1\}$.
\begin{lemma}\label{l45}
Let  $d\ge 1$, $l=1,\cdots,d$, $q\in (1,\infty)$,  and let $g\in L^1(\R^d)+L^\infty(\R^d)$. Then
\begin{equation}\label{ee2}
\mathfrak{D}^{(l)}g\les_q\Big(\sum_{j\in\Z}\big(M_q^{(l)}(M^{(l)}P_j^{(l)} g)\big)^{2}\Big)^{1/2}.
\end{equation}
\end{lemma}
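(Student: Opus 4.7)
The plan is to reduce the martingale square function $\mathfrak{D}^{(l)} g$ to a weighted square sum over Littlewood--Paley pieces via a pointwise almost-orthogonality estimate. I decompose $g = \sum_{k \in \Z} P_k g$ so that $D_j^{(l)} g = \sum_k D_j^{(l)} P_k g$, and aim to establish
\[
\bigl|D_j^{(l)} P_k g\bigr|(x) \lesssim_q 2^{-\epsilon|j-k|}\, M_q^{(l)}\bigl(M^{(l)} P_k g\bigr)(x) \qquad (\star)
\]
for some $\epsilon = \epsilon(q) > 0$. Granting $(\star)$, the lemma follows from a Schur-test routine: applying Cauchy--Schwarz in $k$ with geometric weights $2^{-\epsilon|j-k|/2}$ gives $|D_j^{(l)} g|^2(x) \lesssim \sum_k 2^{-\epsilon|j-k|}\bigl(M_q^{(l)}(M^{(l)} P_k g)(x)\bigr)^2$, and summing in $j$ using $\sum_j 2^{-\epsilon|j-k|} = O(1)$ yields the desired bound after exchanging the order of summation.

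To prove $(\star)$, I use the resolution $\sum_n \psi(2^{-j+n}\xi_l) \equiv 1$ (together with a harmless low-$\xi_l$ cutoff) to decompose $\widehat{P_k g}$ in the $\xi_l$-direction, and apply (\ref{am2}) with $f = P_k g$ to each piece:
\[
\bigl|D_j^{(l)}\bigl(\F^{-1}\{\psi(2^{-j+n}\xi_l)\widehat{P_k g}\}\bigr)\bigr|(x) \lesssim 2^{-|n|\epsilon}\, M_q^{(l)}(M^{(l)} P_k g)(x).
\]
The crucial advantage of invoking (\ref{am2}) with $f = P_k g$ (rather than with the individual pieces) is that the right-hand side features $P_k g$ itself, giving the single-maximal form required by the lemma. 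Since $\widehat{P_k g}$ is supported in $|\xi| \sim 2^k$, the effective range is $n \ge j - k - O(1)$, and the geometric series $\sum_{n \ge j-k} 2^{-|n|\epsilon}$ yields the decay $2^{-\epsilon(j-k)}$ in the regime $j \ge k$.

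The main obstacle is the regime $j < k$, where the above sum gives only an $O(1)$ factor. To recover decay here I exploit the high-frequency cancellation in $P_k g$: the combined cutoff $\phi(2^{-j}\xi_l)\psi(2^{-k}|\xi|)$ is supported on a slab of relative volume $2^{-(k-j)}$ inside the shell $\{|\xi|\sim 2^k\}$, and a direct kernel estimate (scaling $\xi = 2^k\eta$ and integrating by parts in $\eta$) shows that the associated convolution kernel is pointwise dominated by $2^{-\epsilon(k-j)}$ times a scaled Schwartz-type approximate identity in the $l$-direction. This Bernstein-type pointwise gain yields $|P_{\le j}^{(l)} P_k g|(x) \lesssim 2^{-\epsilon(k-j)} M^{(l)} P_k g(x)$, and combined with $|E_j^{(l)} h|\le M^{(l)} h$ and the previous estimate applied to the complementary high-$\xi_l$ components $P_m^{(l)} P_k g$ with $j<m\le k+O(1)$ (each contributing $2^{-(m-j)\epsilon}$), closes $(\star)$ in this regime. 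The hard part will be making the Bernstein kernel estimate quantitative while keeping the right-hand side of $(\star)$ in the single-maximal form $M_q^{(l)}(M^{(l)} P_k g)$ stated in the lemma, rather than in an iterated-maximal form.
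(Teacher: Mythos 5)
Your key estimate $(\star)$ is false in the regime $j<k$, and this is fatal to the whole scheme. Take $d=2$, $l=1$, and let $\widehat{g}$ be a bump concentrated near the point $(2^{j},2^{k})$ with $j\ll k$, so that $P_kg\approx g$ and $g(x)=e^{i(2^{j}x_1+2^{k}x_2)}h(x)$ with $h$ slowly varying. The $x_1$-oscillation of $g$ has wavelength exactly comparable to the martingale scale $2^{-j}$, so $|D_j^{(1)}P_kg|\sim|g|$ on a large set, with no smallness whatsoever, while the right-hand side of $(\star)$ is $2^{-\e(k-j)}$ times a quantity comparable to $\|g\|_\infty$ near the peak of $|g|$. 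There is simply no off-diagonal decay between the martingale scale $j$ and the \emph{isotropic} frequency $k$; the decay in (\ref{am2}) is between $j$ and the $\xi_l$-frequency only. Your two proposed fixes both fail: (i) the ``Bernstein-type pointwise gain'' $|P_{\le j}^{(l)}P_kg|\les 2^{-\e(k-j)}M^{(l)}P_kg$ cannot hold, because the multiplier $\phi(2^{-j}\xi_l)\psi(2^{-k}|\xi|)$ has sup norm $\sim 1$, hence its kernel has $L^1$ norm $\gtrsim 1$ and cannot be dominated by a small multiple of an approximate identity (concretely, for $\widehat g$ supported in $\{|\xi|\sim 2^k,\ |\xi_l|\le 2^{j}\}$ one has $P_{\le j}^{(l)}P_kg=g$, and the claimed bound would force $|g|\les 2^{-\e(k-j)}M^{(l)}g$); and (ii) even for the high-$\xi_l$ components, the sum $\sum_{j<m\le k+O(1)}2^{-(m-j)\e}$ is $O(1)$, dominated by $m=j+1$, not $2^{-\e(k-j)}$.

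The paper's proof avoids the isotropic decomposition entirely and is essentially one line: write $D_j^{(l)}=\sum_{n\in\Z}D_j^{(l)}P_{j-n}^{(l)}\tilde P_{j-n}^{(l)}$, i.e.\ decompose only in the $\xi_l$-frequency \emph{relative to the martingale scale} $j$; apply (\ref{am2}) to each piece to get the factor $2^{-|n|\e}$ with right-hand side $M_q^{(l)}(M^{(l)}P_{j-n}^{(l)}g)$; then use Minkowski's inequality in $\ell^2_j$ and reindex $j\mapsto j+n$, so that the $\ell^2_j$ norm of the right-hand side is independent of $n$ and the sum over $n$ converges. Note that this argument naturally produces the one-variable projections $P_j^{(l)}g$ on the right-hand side (which is exactly the form in which the lemma is invoked in the estimate of $I_2$ in Section \ref{s4}); your first step, aiming at the isotropic $P_jg$ via a diagonal correspondence $j\leftrightarrow k$, is the wrong reduction. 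If you reorganize your argument around the $\xi_l$-frequency decomposition, the $j\ge k$ half of your computation (which is correct) is all that survives, and it becomes the paper's proof.
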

\begin{proof}
We first define the operators
$\tilde{P}_{i}^{(l)}$ $(i\in\Z)$ by $\F\{\tilde{P}_{i}^{(l)} f\}(\xi)=\psi_\circ(2^{-i}\xi_l)\widehat{f}(\xi)$.
Note that
$D_j^{(l)}=\sum_{n\in\Z}D_j^{(l)} P_{j-n}^{(l)}\tilde{P}_{j-n}^{(l)}$.
 Using
 Minkowski's inequality and (\ref{am2}), we then deduce that
$$
\begin{aligned}
\mathfrak{D}^{(l)}g
\les&\  \sum_{n\in\Z}2^{-|n|\e}\Big(\sum_{j\in\Z}\big(M_q^{(l)}(M^{(l)}P_{j-n}^{(l)} g)\big)^{2}\Big)^{1/2}
\end{aligned}
$$
for certain $\e>0$, which yields (\ref{ee2})  immediately.
\end{proof}
\subsection{H\"{o}rmander-type multiplier theorem}
\label{sub3.4}
 We shall utilize   the following H\"{o}rmander-type multiplier theorem with respect to anisotropic dilations (see (\ref{n08})), which will be used in the proof of Proposition \ref{lac} below.
Let $\beta>0$, $i=1,2,\cdots,d$, and
define the operator   $J_{(i)}^\beta$  by $\F\{J_{(i)}^\beta f\}(\xi)=(1+|\xi_i|^2)^{\beta/2}\widehat{f}(\xi)$.
Remember that $\psi_\circ$ is given in {\bf Notation}.
\begin{prop}\label{mtmt}
    Let $\beta>1$, and let $m$ be a bounded function. There is a constant $C_H>0$ such that
\begin{equation}\label{n08}
    \sup_{t_1,t_2,\cdots,t_d>0}\|
   J_{(1)}^\beta\cdots J_{(d)}^\beta\Big(m(t_1\xi_1,\cdots,t_d\xi_d)
   \prod_{i=1}^d\psi_\circ(\xi_i)\Big) \|_1
   \le C_H.
\end{equation}
Then for every $p\in(1,\infty)$, the inequality  $\|\F^{-1}\{m(\xi)\hat{f}\}\|_p
\les C_H\|f\|_p$
holds with the constant  $C_H$  as in (\ref{n08}).
\end{prop}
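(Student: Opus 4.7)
The plan is to reduce Proposition \ref{mtmt} to the standard product Littlewood-Paley and Fefferman-Stein machinery, following the Hörmander/Marcinkiewicz strategy: decompose the multiplier over product dyadic boxes, extract pointwise anisotropic kernel decay from the Bessel-potential hypothesis \eqref{n08}, and close via vector-valued maximal estimates. Fix a one-dimensional Littlewood-Paley bump $\tilde\psi\in C_c^\infty(\R\setminus\{0\})$ with $\mathrm{supp}\,\tilde\psi\subset\{\psi_\circ\equiv 1\}$ and $\sum_{j\in\Z}\tilde\psi(2^{-j}s)=1$ for $s\ne 0$, and decompose
\[
m=\sum_{\vec j\in\Z^d}m_{\vec j},\qquad m_{\vec j}(\xi):=m(\xi)\prod_{i=1}^{d}\tilde\psi(2^{-j_i}\xi_i).
\]
Each $m_{\vec j}$ is supported on the product box $\prod_i\{|\xi_i|\sim 2^{j_i}\}$. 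Let $T_{\vec j}f:=\F^{-1}\{m_{\vec j}\hat f\}$, so the goal is to bound $T_mf=\sum_{\vec j}T_{\vec j}f$ in $L^p$.

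Specializing \eqref{n08} to $t_i=2^{j_i}$ and using $\tilde\psi\,\psi_\circ=\tilde\psi$, a standard fractional Leibniz estimate (absorbing the fixed Schwartz factor $\prod_i\tilde\psi$) gives
\[
\bigl\|J_{(1)}^\beta\cdots J_{(d)}^\beta\bigl[m(2^{j_1}\cdot,\ldots,2^{j_d}\cdot)\prod\nolimits_{i}\tilde\psi\bigr]\bigr\|_1\les C_H.
\]
Since $\|\check g\|_\infty\le C\|g\|_1$, and the rescaling $\xi\mapsto 2^{\vec j}\xi$ identifies this symbol with $m_{\vec j}$ up to dilation, undoing the dilation yields the pointwise anisotropic kernel decay
\[
|K_{\vec j}(y)|\les C_H\prod_{i=1}^d\frac{2^{j_i}}{(1+2^{j_i}|y_i|)^\beta},\qquad K_{\vec j}:=\check m_{\vec j}.
\]
Because $\beta>1$, the $i$-th one-dimensional factor is an $L^1$-integrable radially decreasing approximate identity in $y_i$, and iterating the classical domination of such convolutions by the Hardy-Littlewood maximal operator coordinate by coordinate gives the pointwise bound
\[
|T_{\vec j}f|(x)\les C_H\,M^{(1)}M^{(2)}\cdots M^{(d)}\widetilde\Delta_{\vec j}f(x),
\]
where $\widetilde\Delta_{\vec j}$ is a product Littlewood-Paley projection whose symbol equals $1$ on $\mathrm{supp}\,m_{\vec j}$ (so that $T_{\vec j}f=T_{\vec j}\widetilde\Delta_{\vec j}f$).

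The pieces $\{T_{\vec j}f\}_{\vec j}$ have finite-overlap Fourier supports in the product dyadic sense, so the $d$-parameter Littlewood-Paley inequality (obtained by iterating the one-dimensional LP inequality coordinate by coordinate together with Khintchine, valid for any $p\in(1,\infty)$) yields
\[
\|T_mf\|_p\les\Big\|\Big(\sum_{\vec j\in\Z^d}|T_{\vec j}f|^2\Big)^{1/2}\Big\|_p.
\]
Inserting the pointwise bound, then applying the iterated $\ell^2$-valued Fefferman-Stein maximal inequality for $M^{(i)}$ in each coordinate, and finally invoking the product Littlewood-Paley inequality once more, we obtain
\[
\|T_mf\|_p\les C_H\Big\|\Big(\sum_{\vec j}|\widetilde\Delta_{\vec j}f|^2\Big)^{1/2}\Big\|_p\les C_H\|f\|_p,
\]
which is the desired estimate.

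The step I expect to require the most care is passing from \eqref{n08} (stated with the fixed bump $\psi_\circ$) to the dyadic localizer $\tilde\psi$: since $J_{(i)}^\beta$ does not commute with multiplication by $\tilde\psi$, one must control commutators of Bessel potentials with smooth compactly supported cutoffs. This is a standard, if slightly delicate, product-rule computation exploiting the Schwartz regularity of $\tilde\psi$ and the inclusion $\mathrm{supp}\,\tilde\psi\subset\{\psi_\circ\equiv 1\}$. The hypothesis $\beta>1$ is used exactly once — to ensure integrability of each one-dimensional factor $(1+|x_i|)^{-\beta}$ on $\R$ and thereby the majorization by a product of Hardy-Littlewood maximal functions.
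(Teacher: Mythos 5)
Your proposal is correct and follows essentially the same route as the paper: decompose the multiplier over product dyadic boxes, convert the $L^1$ Bessel-potential hypothesis (with $t_i=2^{j_i}$) into the pointwise anisotropic kernel bound $|K_{\vec j}(y)|\les C_H\prod_i 2^{j_i}(1+2^{j_i}|y_i|)^{-\beta}$, dominate each piece by iterated one-dimensional maximal functions of a product Littlewood--Paley projection, and close with the vector-valued Fefferman--Stein inequality and the $d$-parameter Littlewood--Paley theory. The only cosmetic difference is that the paper sidesteps your commutator/Leibniz step by keeping the partition bump $\psi$ attached to $\hat f$ and the wider bump $\psi_\circ$ (equal to $1$ on $\mathrm{supp}\,\psi$) attached to $m$, so the hypothesis applies verbatim to the kernel symbol.
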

\begin{rem}\label{qq09}
By Theorem 1.1 in \cite{GS19} (also see   \cite{CS92,CS95} for some related works), the assumption
(\ref{n08}) can be substituted  by
$$ \sup_{t_1,t_2,\cdots,t_d>0}\|
   J_{(1)}^\beta\cdots J_{(d)}^\beta\Big(m(t_1\xi_1,\cdots,t_d\xi_d)\prod_{i=1}^d
   \psi_\circ(\xi_i)\Big) \|_2
   \le C_H,$$
   with $\beta>1/2$. Here we do not seek the minimal assumptions on the number of derivatives because the condition (\ref{n08}) suffices  for the demonstration of Theorem  \ref{2e}.
\end{rem}
\begin{proof}
Using the partition of  unity  to  each variable $\xi_l$, we can write $m(\xi)$ as
$$m(\xi)=m(\xi)\sum_{{\bf j}\in\Z^d}\psi(\frac{\xi_i} {2^{j_1}})\cdots\psi(\frac{\xi_d}{ 2^{j_d}}),\ \ {\bf j}:=(j_1,\cdots,j_d). $$
Then, applying  the Littlewood-Paley theory, we deduce that for $p\in(1,\infty)$,
\begin{equation}\label{a76}
    \|\F^{-1}\{m(\xi)\hat{f}\}\|_p
    \les \|\big(\sum_{{\bf j}\in\Z^d}
    |K_{{\bf j}}*(P_{j_1}^{(1)}\cdots P_{j_d}^{(d)} f)|^2\big)^{1/2}\|_p,
\end{equation}
where
$K_{{\bf j}}(x):=\F^{-1}\big\{\psi_\circ(2^{-j_1}\xi_1)\cdots\psi_\circ(2^{-j_d}\xi_d)m(\xi)\big\}
(x).$
Note that the assumption (\ref{n08}) implies
$$|\F^{-1}\{\psi_\circ(\xi_1)\cdots\psi_\circ(\xi_d)m(2^{j_1}\xi_1,\cdots,2^{j_d}\xi_d)\}|(x)
\les C_H\ \prod_{i=1}^d\frac{1}{(1+|x_i|)^\beta},$$
which with the change of   variables $\xi_i\to 2^{-j_i}\xi_i$ $(1\le i\le d)$ leads to
$K_{{\bf j}}(x)
\les C_H\ \prod_{i=1}^d\frac{2^{j_i}}{(1+|2^{j_i}x_i|)^\beta}$. We thus have by a routine calculation that for all ${\bf j}\in\Z^d$,
\begin{equation}\label{91}
    |K_{{\bf j}}*g|(x)\les C_H\ M^{(1)}\cdots M^{(d)}g(x).
\end{equation}
Finally, we may infer the required conclusion from  Khintchine's inequality and the Marcinkiewicz multiplier theorem by entering  (\ref{91}) into (\ref{a76}).
\end{proof}
\section{A crucial maximal  estimate on $\R^d$}
\label{s4}
In this section, we present a maximal  estimate for the Mihlin-H\"{o}rmander-type multiplier which is
one of
the  novelties in this paper. More precisely,
the challenge is in employing the condition (see  (\ref{cod1}) below) for the  Mihlin-H\"{o}rmander-type multiplier with respect to the anisotropic scaling to control the  maximal operator (see (\ref{i00}) and (\ref{w1}) below) in terms of the isotropic scaling in all variables.
As in the previous statements, we shall develop a bootstrapping argument to overcome this difficulty.
\vskip.1in
   For $s\in \R$ and $q\in [1,\infty]$, we define  the $W^{s,q}(\R^d)$ (Sobolev space) norm of a function $f$ by $\|J^sf\|_q$,
where the operator $J^s$ is defined by $\widehat{J^sf}(\xi):=(1+|\xi|^2)^{s/2}\widehat{f}(\xi)$.
Let  $\{\A_i\}$ be defined by (\ref{alpha}), and let $\delta_j^\A$ be the dilation  defined   by (\ref{v1}) with $b=j$.
To relate  the anisotropic scaling with the  isotropic  scaling, we  introduce a new distance
$$\|\xi\|:=\big((\xi_1^2)^\frac{\A_d}{\A_1}+(\xi_2^2)^\frac{\A_d}{\A_2}+\cdots+(\xi_d^2)^\frac{\A_d}{\A_d}\big)^\frac{1}{2\A_d}.$$
Note that    $\|\delta_j^\A(\xi)\|=2^{-j}\|\xi\|$,  $\|\xi\|\sim \sum_{l=1}^d|\xi_l|^\frac{1}{\A_l}$ and  $\psi(\|\xi\|)\in C_0^\infty(\R^d)$.
\begin{thm}\label{t4.1}
 Let $d\ge 1$ and $n\in\Z$.
Suppose that $a(\xi)$ satisfies $a(0)=0$ and
\begin{equation}\label{cod1}
\sup_{j\in\Z}\Big\|\psi(\|\xi\|)a\big(\delta_{j}^\A(\xi)\big)\Big\|_{W^{d+3,1}(\R^d)}\les 1.
\end{equation}
Let $\NN$ be a subset of $\Z$ with $\#\NN=N_0$, and let  $\T_n$  denote an operator defined by
\beq\label{i00}
\widehat{\T_nf}(\xi):=a(2^n\xi)\widehat{f}(\xi).
\eeq
Then  for each  $p\in (1,\infty)$,
\begin{equation}\label{w1}
\|\sup_{n\in\NN}  |\T_n f(x)|\|_p\les \sqrt{\log(e+N_0)}\|f\|_p,
\end{equation}
holds with  the implicit constant  independent of $\NN$.
\end{thm}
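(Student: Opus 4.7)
The plan is to apply the Chang--Wilson--Wolff (CWW) inequality of Proposition~\ref{pp1} after a bootstrap decomposition of the multiplier that bridges the mismatch between the anisotropic Mihlin--H\"ormander hypothesis~(\ref{cod1}) and the isotropic dilation $2^n$ appearing in the definition of $\T_n$.

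I would begin with the standard $\sqrt{\log}$ scheme. For a suitably chosen coordinate $l\in\{1,\dots,d\}$, decompose
\begin{equation*}
|\{\sup_{n\in\NN} |\T_n f| > 4\lambda\}|
\le \bigl|\{\sup_{n} \mathfrak{D}^{(l)} \T_n f > \epsilon\lambda\}\bigr|
+ \sum_{n \in \NN}\bigl|\{|\T_n f| > 4\lambda,\ \mathfrak{D}^{(l)}\T_n f \le \epsilon\lambda\}\bigr|.
\end{equation*}
The second inequality of~(\ref{a.2}) controls each summand by $c_2 e^{-c_1\epsilon^{-2}}|\{M^{(l)}\T_n f>\lambda\}|$. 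A uniform $L^p$-bound on $\T_n$ (supplied by Proposition~\ref{mtmt} after translating~(\ref{cod1}) into the anisotropic H\"ormander form required there, using in particular that $\A_1=1$) combined with the $L^p$-boundedness of $M^{(l)}$ shows that the sum over $n\in\NN$ contributes at most a factor $N_0$. Balancing the two terms with $\epsilon^2 \sim 1/\log(e+N_0)$ then reduces the theorem to the square-function bound
\begin{equation}\label{sq-goal}
\bigl\|\sup_{n\in\NN} \mathfrak{D}^{(l)}(\T_n f)\bigr\|_p \les \|f\|_p.
\end{equation}

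To prove~(\ref{sq-goal}) I would construct a bootstrap chain $a = a_{(0)} \to a_{(1)} \to \cdots \to a_{(d-1)}$ of approximations, with $a_{(k)}$ obtained from $a_{(k-1)}$ by ``freezing'' one further coordinate direction at the scale prescribed by the anisotropic Littlewood--Paley decomposition adapted to $\|\cdot\|$. This gives the telescoping identity
\begin{equation*}
a(2^n\xi) = a_{(d-1)}(2^n\xi) + \sum_{k=1}^{d-1}\bigl(a_{(k-1)}(2^n\xi) - a_{(k)}(2^n\xi)\bigr),
\end{equation*}
in which each difference $a_{(k-1)} - a_{(k)}$ is Littlewood--Paley-localized on a dyadic interval of the $\xi_{l_k}$-axis at a definite scale. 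Lemma~\ref{chu}(2) then yields exponential decay of $D_j^{(l_k)}$ applied to the corresponding piece of $\T_n f$ as $j$ moves off that natural scale, and summing over $j$ via Lemma~\ref{l45} produces the uniform pointwise bound $\mathfrak{D}^{(l_k)}(\cdots) \les M^2 f$, hence~(\ref{sq-goal}) for the error terms. The remaining base case $a_{(d-1)}(2^n\xi)$ depends essentially on $\xi_1$ alone; since $\A_1=1$ the isotropic and anisotropic dilations agree in this variable, and~(\ref{sq-goal}) becomes a one-dimensional Mihlin--H\"ormander maximal estimate handled by the classical CWW argument in Proposition~\ref{pp1}.

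The main obstacle I anticipate is the construction of the approximations $a_{(k)}$: they must respect the anisotropic hypothesis~(\ref{cod1}) so that the derivative losses at each freezing step remain controlled, while simultaneously producing the one-coordinate Littlewood--Paley localization that feeds Lemma~\ref{chu}(2) in the target direction $l_k$. Because the exponents $\A_i$ are strictly increasing, the natural ordering is to decouple $\xi_d$ first and $\xi_2$ last, and the $d+3$ derivatives in~(\ref{cod1}) are precisely what is needed to complete the $d-1$ freezing steps together with the final one-dimensional bound. A careful accounting of the summability across the anisotropic dyadic shells $\|\xi\|\sim 2^{-j}$ will be necessary to ensure the errors assemble into a bound with implicit constant independent of~$\NN$.
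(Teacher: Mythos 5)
Your outline follows essentially the same route as the paper: telescope the multiplier by freezing one coordinate at a time, run a Chang--Wilson--Wolff argument in the direction matched to each frozen coordinate, and control the resulting martingale square functions via Lemma \ref{chu} and Lemma \ref{l45}. Two points, however, need repair before this is a proof. First, your opening reduction fixes a single direction $l$ and reduces the whole theorem to one square-function bound $\|\sup_{n}\mathfrak{D}^{(l)}(\T_n f)\|_p\les\|f\|_p$, yet your subsequent argument only controls $\mathfrak{D}^{(l_k)}$ of the \emph{$k$-th piece} for a direction $l_k$ that varies with the piece; these do not combine into the single bound you stated. The order of operations must be reversed: decompose $\T_n$ first, then apply (\ref{a.2}) separately to each piece in its own direction (this costs only a factor $d$ in the weak-type bound), which is exactly how the paper organizes the argument via the operators $V_{1,n}^{(l)}$ and $V_{2,n}^{(l)}$.

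Second, and more seriously, the step you defer as ``a careful accounting of the summability across the anisotropic dyadic shells'' is the heart of the theorem, and nothing in your outline supplies it. After inserting the anisotropic Littlewood--Paley pieces $a_j$ and an isotropic localization $\psi(2^{-k}\xi_l)$, one faces a sum over all $j$ with $j\A_l>n+k$, and the individual terms carry no smallness. The paper's Proposition \ref{pq1} resolves this by applying the fundamental theorem of calculus to the frozen difference $a_j(2^n\delta_j^\A(\xi_{1,l},0))-a_j(2^n\delta_j^\A(\xi_{1,l-1},0))$, which vanishes to first order in $\xi_l$ and therefore produces the geometric factor $2^{n+k-j\A_l}$, summable over $\Lambda_l^n(k)$; the same computation yields the pointwise bound $|S_{l,k,n}f|\les M^{(1,d)}(P_k^{(l)}f)$ that feeds both the Fefferman--Stein step and the $\mathfrak{D}^{(l)}$ estimate through Lemma \ref{l45}. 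Relatedly, your base case cannot be dispatched as ``a classical one-dimensional Mihlin--H\"ormander maximal estimate'': the final piece still carries the sum over $j$, and the paper must invoke the hypothesis $a(0)=0$ (hence $a_j(0)=0$) to write it once more as a first-order-vanishing difference before the same summability mechanism applies. Your outline never uses $a(0)=0$, which is a sign that the decisive cancellation is missing from the argument as written.
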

\begin{rem}
The Sobolev space  in (\ref{cod1}) can be slightly improved to be   $W^{d+2+\e_0,1}(\R^d)$ with $\e_0\in(0,1)$, but  we choose not to pursue  this direction because this operation has no effect on the proof.
\end{rem}
Before we give the proof, we need some new notations.  For each  $l\in [2, d]\cap \Z$,  we write
 $\xi_{1,l}:=(\xi_1,\cdots,\xi_l)\in\R^l.$ Then we have $\xi_{1,d}=\xi=(\xi_1,\cdots,\xi_d)$.
For convenience, we write  $(\xi_{1,d},0)=\xi$, and $(\xi_{1,l},0)=(\xi_{1,l},0,\cdots,0)\in\R^d$ whenever $1\le l\le d-1$. Besides, for $q\in (1,\infty)$, if $1\le i<j\le d$, we write
\begin{equation}\label{maxi}
M^{(i,j)}:=M^{(i)}\circ M^{(i+1)}\circ\cdots \circ M^{(j)}\ \ {\rm and}\ \  M^{(i,j)}_qf(x)=\big(M^{(i,j)}(|f|^q)(x)\big)^{1/q}.
\end{equation}
 For $i> j$ and $i=j$, we denote   $M^{(i,j)}:=Id$ (identity)    and  $M^{(i,j)}:=M^{(i)}$, respectively.
Define
\begin{equation}\label{isd}
a_j(\xi):=a(\delta_{-j}^\A(\xi))\psi(\|\xi\|),
\end{equation}
which satisfies
$|\F^{-1}\{a_j\}*f|\les Mf$ (since (\ref{cod1})). The proof of Theorem \ref{t4.1} is based  on the  following Proposition \ref{pq1} with respect to  the multipliers $\{m_{l,j,n}\}$ satisfying that for each $l\in [2, d]\cap \Z$ and every $(j,n)\in\Z^2$,
there is a positive constant $C_0$ independent of
$j,n$ such that
\beq\label{ads1}
|\F^{-1}\{m_{l,j,n}\widehat{f}\}|(x)\le C_0 M^{(l+1,d)}f(x).
\eeq
More importantly, Proposition \ref{pq1} will
 provide the conditions to the bootstrapping argument below.
\vskip.1in
\begin{prop}\label{pq1}
Let  $l\in [2, d]\cap \Z$, $(j,n,k)\in\Z^3$,  $p\in (1,\infty)$, and let  $\{m_{l,j,n}\}$ be the multipliers satisfying (\ref{ads1}). Define the sets $\{\Lambda_l^n(k)\}$ by $\Lambda_l^n(k)=\{j\in\Z:\ j\A_l>n+k\}$, and
define the operators $\{S_{l,k,n}\}$ by
$$\F\{S_{l,k,n}f\}(\xi)=\sum_{j\in \Lambda_l^n(k)}
\Big(a_j(2^n\delta_j^\A(\xi_{1,l},0))-a_j(2^n\delta_j^\A(\xi_{1,l-1},0))\Big)m_{l,j,n}(\xi)\psi(2^{-k}\xi_l)
\widehat{f}(\xi)$$
with  $a_j$  given by (\ref{isd}).
Then
there are two positive  constants  $C_1$, independent of $k,n$, and $C_2$, independent of $n$, such that
\begin{align}
|S_{l,k,n}f(x)|\le&\  C_1 M^{(1,d)}(P_k^{(l)}f)(x),\label{ev1}\\
\|\sum_{k\in\Z}S_{l,k,n}f\|_p\le&\  C_2 \|f\|_p.\label{ev2}
\end{align}
\end{prop}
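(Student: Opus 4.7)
The plan is to first prove the pointwise bound (\ref{ev1}) via a partial-Fourier / kernel argument, then to deduce the $L^p$ bound (\ref{ev2}) from it by combining Littlewood-Paley in the $\xi_l$ direction with the Fefferman--Stein vector-valued maximal inequality. I expect the main obstacle to be the kernel estimate in step one.

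For (\ref{ev1}), I would exploit the product structure of the symbol: the bracket
\[
\omega_{j,k,n}(\xi_{1,l}):=\bigl[a_j(2^n\delta_j^\A(\xi_{1,l},0))-a_j(2^n\delta_j^\A(\xi_{1,l-1},0))\bigr]\psi(2^{-k}\xi_l)
\]
depends only on the first $l$ variables, while the remaining factor $m_{l,j,n}(\xi)$ is controlled pointwise by (\ref{ads1}). Taking a partial inverse Fourier transform in $\xi_{1,l}$ represents the $j$-th summand as a partial convolution against an $l$-dimensional kernel $K_{j,k,n}$ obtained from $\omega_{j,k,n}$, and (\ref{ads1}) yields the pointwise bound
\[
|S_{l,k,n}f(x)|\le C_0\sum_{j\in\Lambda_l^n(k)}\int_{\R^l}|K_{j,k,n}(y)|\,M^{(l+1,d)}f(x_{1,l}-y,x_{l+1,d})\,dy.
\]
The task thus reduces to exhibiting a uniformly $L^1$-bounded, radially-decreasing majorant of $\sum_{j\in\Lambda_l^n(k)}|K_{j,k,n}|$ in the anisotropic $\delta_j^\A$-scaling. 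The decisive smallness enters via the fundamental theorem of calculus in the $l$-th slot,
\[
a_j(2^n\delta_j^\A(\xi_{1,l},0))-a_j(2^n\delta_j^\A(\xi_{1,l-1},0))=2^{n-j\A_l}\xi_l\int_0^1(\partial_l a_j)\bigl(2^n\delta_j^\A(\xi_{1,l-1},s\xi_l,0)\bigr)\,ds,
\]
which on $\mathrm{supp}\,\psi(2^{-k}\xi_l)$ contributes a prefactor of order $2^{n+k-j\A_l}<1$ whenever $j\in\Lambda_l^n(k)$. Integrating by parts $d+3$ times in each $y_i$ and invoking the uniform $W^{d+3,1}$ bound (\ref{cod1}) converts this into geometric decay in $j\A_l-n-k$, so that the sum over $j\in\Lambda_l^n(k)$ converges. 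The resulting majorant is a product of one-dimensional approximate identities whose convolution is dominated by $M^{(1,l)}$, and $\psi(2^{-k}\xi_l)$ is absorbed into $P_k^{(l)}f$, giving $|S_{l,k,n}f(x)|\le C_1 M^{(1,l)}M^{(l+1,d)}(P_k^{(l)}f)(x)=C_1 M^{(1,d)}(P_k^{(l)}f)(x)$ with $C_1$ independent of $k,n$.

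For (\ref{ev2}), the factor $\psi(2^{-k}\xi_l)$ in the symbol localizes $\widehat{S_{l,k,n}f}$ to $\{|\xi_l|\sim 2^k\}$, so the family $\{S_{l,k,n}f\}_{k\in\Z}$ is frequency-disjointly supported in $\xi_l$. The vector-valued Littlewood--Paley inequality in the $x_l$ variable gives, for $1<p<\infty$,
\[
\Bigl\|\sum_{k\in\Z}S_{l,k,n}f\Bigr\|_p\les\Bigl\|\Bigl(\sum_{k\in\Z}|S_{l,k,n}f|^2\Bigr)^{1/2}\Bigr\|_p.
\]
Substituting (\ref{ev1}) pointwise, applying the Fefferman--Stein vector-valued maximal inequality to each one-dimensional maximal operator that composes $M^{(1,d)}$, and finishing with the one-dimensional Littlewood--Paley square-function estimate in $x_l$ (used slicewise via Fubini), one arrives at $\|(\sum_k|P_k^{(l)}f|^2)^{1/2}\|_p\les\|f\|_p$. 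Every intermediate constant depends only on $p$ and $d$, so $C_2$ is independent of $n$.

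The main technical hurdle is the kernel estimate in the first step: one must carry out the integrations by parts in the anisotropic $\delta_j^\A$-scaling and precisely track how the Taylor gain $2^{n+k-j\A_l}$ interacts with the derivative costs $2^{n-j\A_i}$ at each step, relying only on the uniform $W^{d+3,1}$ control from (\ref{cod1}). Once that accounting is in place, the partial-Fourier decomposition, Littlewood--Paley, and Fefferman--Stein pieces are standard.
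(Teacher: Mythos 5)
Your proposal is correct and follows essentially the same route as the paper: the fundamental theorem of calculus in the $\xi_l$-slot to extract the factor $2^{n+k-j\A_l}$ that makes the sum over $j\in\Lambda_l^n(k)$ geometrically convergent, Sobolev control from (\ref{cod1}) to majorize the rescaled kernel by a product of one-dimensional approximate identities dominated by $M^{(1,l)}$, composition with $M^{(l+1,d)}$ from (\ref{ads1}), and insertion of $P_k^{(l)}$ via $S_{l,k,n}=S_{l,k,n}\tilde{P}_k^{(l)}$; the deduction of (\ref{ev2}) by Littlewood--Paley, Fefferman--Stein, and Littlewood--Paley again is also exactly the paper's argument.
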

\begin{proof}[Proof of Proposition \ref{pq1}]
We first deduce (\ref{ev2}) from (\ref{ev1}). Applying
(\ref{ev1}), the Fefferman-Stein inequality and the Littlewood-Paley theory in order,
we can control  the left-hand side of (\ref{ev2}) by a  constant (uniformly in $k,n$) multiplied by
$$
\begin{aligned}
\|(\sum_{k\in\Z}|S_{l,k,n}f|^2)^{1/2}\|_p\les&\  \|(\sum_{k\in\Z}|M^{(1,d)}(P_k^{(l)}f)|^2)^{1/2}\|_p\\
\les&\   \|(\sum_{k\in\Z}|P_k^{(l)}f|^2)^{1/2}\|_p\les \|f\|_p.
\end{aligned}
$$
As a result, (\ref{ev2}) follows.
We next show  (\ref{ev1}).
Let $K_{j,k,n}$ be the function on $\R^l$ defined by
\begin{equation}\label{Ed09}
\begin{aligned}
\F\{K_{j,k,n}\}(\xi_{1,l},0)=&\ \Big(a_j\big(2^n\delta_j^\A(\xi_{1,l},0)\big)-a_j\big(2^n\delta_j^\A(\xi_{1,l-1},0)\big)\Big)\psi(2^{-k}\xi_{l}).
\end{aligned}
\end{equation}
Applying the fundamental theorem of calculus to (\ref{Ed09}), we infer by changing  the variable
$(\xi_{1,l},0)\to 2^{-n}\delta_{-j}^\A(\xi_{1,l},0)$ that
$$\widehat{K_{j,k,n}}(2^{-n}\delta_{-j}^\A(\xi_{1,l},0))= \psi(\frac{\xi_l}{ 2^{k+n-j\A_l}})\xi_l
\int_0^1 (\p_l a_j)(\xi_{1,l-1},s\xi_l,0)ds.
$$
Changing the variable $\xi_l\to 2^{n+k-j\A_l}\xi_l$,  we obtain from the inequality   $\|a_j\|_{W^{d+3,1}(\R^d)}\les1$ (since (\ref{cod1}) and (\ref{isd})) that
\begin{equation}\label{wss1}
\|\F\{K_{j,k,n}\}(2^{j\A_1-n}\xi_1,\cdots,2^{j\A_{l-1}-n}\xi_{l-1},2^k\xi_l,0) \|_{W^{l+1,1}(\R^l)}\les 2^{n+k-j\A_l},
\end{equation}
which implies
\begin{equation}\label{m2}
    |K_{j,k,n}(x_{1,l})|\les 2^{n+k-j\A_l}G_{l,j,k,n}(x_{1,l}),
    \ \ x_{1,l}=(x_1,\cdots,x_l),
\end{equation}
where    the function $G_{l,j,k,n}(x_{1,l})$ is  given  by
\begin{equation}\label{m1}
    G_{l,j,k,n}(x_{1,l}):=\frac{2^k}{(1+2^{k}|x_l|)^{1+1/l}}
    \prod_{i=1}^{l-1}\frac{2^{j\A_i-n}}{(1+2^{j\A_i-n}|x_i|)^{1+1/l}}.
\end{equation}
Note that the sum over $j\in\Lambda_l^n(k)$ can be  absorbed by
the factor $2^{n+k-j\A_l}$.
 Combining (\ref{m2}) and (\ref{m1}), we  then deduce from the application of the Fourier inverse transform that
$
|S_{l,k,n}f|(x)
\les M^{(1,d)}f(x)
$
with the implicit constant independent of $k,n$.
Finally, we can achieve (\ref{ev1})  by using $S_{l,k,n}f=S_{l,k,n}\tilde{P}_k^{(l)}f$ and applying  the above arguments to the operator $S_{l,k,n}\tilde{P}_k^{(l)}$ with $\tilde{P}_k^{(l)}$  as in the proof of Lemma \ref{l45}.
\end{proof}
\begin{rem}\label{rr21}
Clearly,
the above set $\Lambda_l^n(k)$    can be replaced by $\Lambda_l^n(k\pm C)$ with $0< C\les1$.
\end{rem}
\begin{proof}[Proof of Theorem \ref{t4.1}]
By interpolation,  it suffices to show that for each $p\in(1,\infty)$,
\begin{equation}\label{w2w0}
|\{x\in\R^d:\ \sup_{n\in\NN}  |\T_n f(x)|>4\lambda\}|\les \big( \lambda^{-1} \sqrt{\log(e+N_0)}\big)^p
\end{equation}
holds for all Schwartz functions $f$ with $\|f\|_p=1$, and all $\lambda>0$.
Since $a(0)=0$, we may decompose $\T_n f$ as
$$\widehat{\T_n f}(\xi)
=\sum_{k\in\Z}\sum_{j\in \Z}a_j(2^n\delta_j^\A(\xi))\psi(2^{-k}\xi_d)
\widehat{f}(\xi).$$
In fact, using the supports of $a_j(2^n\delta_j^\A(\xi))$ and $\psi(2^{-k}\xi_d)$, we deduce that there is a unform $C>0$ such that
$j\in \Lambda_d^n(k-C)$.
Let
$ V_{1,n}^{(d)}$ and $ V_{2,n}^{(d)}$ be  two operators defined by
$$
\begin{aligned}
    \F\{V_{1,n}^{(d)}f\}(\xi)=&\ \sum_{k\in\Z}\sum_{j\in \Lambda_d(k-C)}
\Big(a_j(2^n\delta_j^\A(\xi))-a_j(2^n\delta_j^\A(\xi_{1,d-1},0))\Big)\psi(2^{-k}\xi_d)
\widehat{f}(\xi)\ {\rm and}\\
\F\{V_{2,n}^{(d)}f\}(\xi)=&\ \sum_{k\in\Z}\sum_{j\in \Lambda_d(k-C)}
a_j(2^n\delta_j^\A(\xi_{1,d-1},0))\psi(2^{-k}\xi_d)
\widehat{f}(\xi),\ {\rm respectively.}
\end{aligned}
$$
Then
$
    \T_n f(x)=V_{1,n}^{(d)}f(x)+V_{2,n}^{(d)}f(x),
$
and we will obtain  the desired estimate  (\ref{w2w0})  if
  \begin{align}
|\{x\in\R^d:\ \sup_{n\in\NN}  |V_{1,n}^{(d)}f(x)|>4\lambda\}|\les&\  \big( \lambda^{-1} \sqrt{\log(e+N_0)}\big)^p\ {\rm and}\label{w2w2}\\
|\{x\in\R^d:\ \sup_{n\in\NN}  |V_{2,n}^{(d)}f(x)|>4\lambda\}|\les&\  \big( \lambda^{-1} \sqrt{\log(e+N_0)}\big)^p\label{w2w3}
\end{align}
hold for all Schwartz functions $f$ with $\|f\|_p=1$, and all $\lambda>0$.
To finish the proof of Theorem \ref{t4.1}, it remains to show (\ref{w2w2}) and (\ref{w2w3}). For the proof of (\ref{w2w2}),
we shall use Proposition \ref{pq1}, the inequality (\ref{a.2}) of the  Chang-Wilson-Wolff type,  and Lemma \ref{l45}.  For (\ref{w2w3}), however, its proof is more complicate since we  need more techniques to handle the sum of $j$. More precisely,
we require  some further careful  decompositions which will form a bootstrapping approach. Besides, the method  in \cite{Guo20}, which is based on a  Cotlar type inequality, does not work since the isotropic dilations have an effect on all variables at this point.

We begin with the estimate of (\ref{w2w2}).
Let $\e_*$  be a positive constant to be chosen later.
The left-hand side of (\ref{w2w2}) is
bounded by
$$
\begin{aligned}
&\sum_{n\in\NN} |\{x\in\R^d:\ |V_{1,n}^{(d)}f(x)|>4\lambda,\ \mathfrak{D}^{(d)}V_{1,n}^{(d)}f(x)\le \e_* \lambda\}|\\
+&\ |\{x\in\R^d:\ \sup_{n\in\NN}|\mathfrak{D}^{(d)}V_{1,n}^{(d)}f(x)|> \e_* \lambda\}|
=:\ \ I_1+I_2,
\end{aligned}
$$
where $\mathfrak{D}^{(d)}f$ is   the dyadic square function with respect to the martingale operator $D_j^{(d)}$ (see Subsection \ref{subCWW}).
Applying  (\ref{a.2}) with $l=d$ and Proposition \ref{pq1} with $m_{l,j,n}(\xi)\equiv 1$, we deduce
\begin{equation}\label{i1}
    \begin{aligned}
I_1\le&\  c_2N_0\  e^{-c_1\e_*^{-2}}\max_{n\in\NN} |\{x\in\R^d:\ M^{(d)}V_{1,n}^{(d)}f(x)>\lambda\}|\\
\les&\ N_0 \ e^{-c_1\e_*^{-2}}\lambda^{-p}\max_{n\in\NN}  \|M^{(d)}V_{1,n}^{(d)}f\|_p^p\\
\les&\ N_0\  e^{-c_1\e_*^{-2}}\lambda^{-p}
\end{aligned}
\end{equation}
with the constants $c_1,c_2$  as in Proposition \ref{pp1}.
Using Chebyshev's inequality, (\ref{ee2}) with $l=d$ and (\ref{ev1}), we then have
$$
\begin{aligned}
I_2\le&\  \e_*^{-p}\lambda^{-p}\| \sup_{n\in \NN} |\mathfrak{D}^{(d)}(V_{1,n}^{(d)}f)|\|_p^p\\
\les&\ \e_*^{-p}\lambda^{-p} \big\| \sup_{n\in \NN}\| M_q^{(d)}(M^{(d)}V_{1,n}^{(d)}P_k^{(d)}f) \|_{l^2_k} \big\|_p^p\\
\les&\ \e_*^{-p}\lambda^{-p} \big\| \| M_q^{(d)}(M^{(d)}M^{(1,d)}P_k^{(d)}f) \|_{l^2_k} \big\|_p^p
\end{aligned}
$$
where  $q$ will be chosen so that $ 1<q<\min\{p,2\}$.
Since $p\in(1,\infty)$, it follows from the Fefferman-Stein inequality and the Littlewood-Paley theory that
$\big\| \| M_q^{(d)}(M^{(d)}M^{(1,d)}P_k^{(d)}f) \|_{l^2_k} \big\|_p\les \|f\|_p,$
which yields
\begin{equation}\label{i2}
I_2\les \e_*^{-p}\lambda^{-p}.
\end{equation}
Combining (\ref{i1}) and (\ref{i2}),  we finally obtain  (\ref{w2w2}) by
setting   $\e_*^{-2}=c_1^{-1}\log N_0$.
\vskip.1in
It remains to show  (\ref{w2w3}). Since we do not have a small factor
 to absorb the sum over $j\in\Lambda_l^n(k-C)$, (\ref{w2w3}) requires a more intricate analysis.
Write $\phi_k:=\sum_{j\le k}\psi(2^{-j}\cdot)$.
By changing the order of the sums of $j$ and $k$,
we rewrite $\F\{V_{2,n}^{(d)}f\}(\xi)$ as
$$\F\{V_{2,n}^{(d)}f\}(\xi)=\sum_{j\in\Z}
a_j(2^n\delta_j^\A(\xi_{1,d-1},0))m_{d-1,j,n}(\xi)
\widehat{f}(\xi)$$
for some $m_{d-1,j,n}(\xi)$ satisfying
\begin{equation}\label{md1}
|\F^{-1}\{m_{d-1,j,n}(\xi)
\widehat{f}\}|(x)\les M^{(d)}f(x).
\end{equation}
Then, it follows by
applying the partition of unity $\sum_{k\in\Z}\psi(2^{-k}\xi_{d-1})=1$ that
$$
\begin{aligned}
    \F\{V_{2,n}^{(d)}f\}(\xi)
=&\ \sum_{k\in\Z}\sum_{j\in \Lambda_{d-1}^n(k-C)}a_j(2^n\delta_j^\A(\xi_{1,d-1},0))\psi(\frac{\xi_{d-1}}{2^{k}})m_{d-1,j,n}(\xi)
\widehat{f}(\xi)
\end{aligned}
$$
for some $C$.
Next, we further decompose $V_{2,n}^{(d)}$ by preforming  an  analogous  process  as splitting  $\T_n$ into $V_{1,n}^{(d)}$ and  $V_{2,n}^{(d)}$. Precisely, we have
$$V_{2,n}^{(d)}f(x)=V_{1,n}^{(d-1)}f(x)+V_{2,n}^{(d-1)}f(x),$$
where
 the operators
$ V_{1,n}^{(d-1)}$ and $ V_{2,n}^{(d-1)}$ are defined by
$$
\begin{aligned}
   \F\{V_{1,n}^{(d-1)} f\}(\xi):=&\ \sum_{k\in\Z}\sum_{j\in \Lambda_{d-1}^n(k-C)}
\Big\{a_j\big(2^n\delta_j^\A(\xi_{1,d-1},0)\big)-a_j\big(2^n\delta_j^\A(\xi_{1,d-2},0)
\big)\Big\}
\psi(\frac{\xi_{d-1}}{2^{k}})\\
&\ \ \ \ \ \ \ \ \ \ \ \times
m_{d-1,j,n}(\xi)
\widehat{f}(\xi)\ \ \ \ {\rm and}\\
\F\{V_{2,n}^{(d-1)}f\}(\xi):=&\ \sum_{k\in\Z}\sum_{j\in \Lambda_{d-1}^n(k-C)}
a_j(2^n\delta_j^\A(\xi_{1,d-2},0))\psi(\frac{\xi_{d-1}}{2^{k}})m_{d-1,j,n}(\xi)
\widehat{f}(\xi).
\end{aligned}
$$
Hence, (\ref{w2w3}) will follow if
\begin{align}
|\{x\in\R^d:\ \sup_{n\in\NN}  |V_{1,n}^{(d-1)}f(x)|>4\lambda\}|\les&\  \big( \lambda^{-1} \sqrt{\log(e+N_0)}\big)^p\ {\rm and}\ \label{cx1}\\
|\{x\in\R^d:\ \sup_{n\in\NN}  |V_{2,n}^{(d-1)}f(x)|>4\lambda\}|\les&\  \big( \lambda^{-1} \sqrt{\log(e+N_0)}\big)^p\label{cx2}
\end{align}
hold for all Schwartz functions $f$ with $\|f\|_p=1$, and all $\lambda>0$. Notice that
 we can get  (\ref{cx1}) by
arguing similarly as in the proof of  the  estimate  of $V_{1,n}^{(d)}$. A bit more precisely,  its proof is based on
   (\ref{a.2}) with $l=d-1$,   Proposition \ref{pq1} with $l=d-1$,
   and (\ref{md1}).  As a result, it remains to prove (\ref{cx2}). By a similar argument as the treatment of  $V_{2,n}^{(d)}$, we can write
$$\F\{V_{2,n}^{(d-1)}f\}(\xi)=\ \sum_{j\in\Z}
a_j(2^n\delta_j^\A(\xi_{1,d-2},0))m_{d-2,j,n}(\xi)
\widehat{f}(\xi)
$$
for some  $m_{d-2,j,n}(\xi)$ satisfying
$
|\F^{-1}\{m_{d-2,j,n}(\xi)
\widehat{f}\}|(x)\les M^{(d-1,d)}f(x).
$
Without loss of generality, we assume $d\ge4$ in what follows since otherwise we complete the proof by the simple estimate (\ref{endf}) below.
 For each $2\le l\le d-2$, we define
$ V_{1,n}^{(l)}$ and $ V_{2,n}^{(l)}$ by
$$
\begin{aligned}
   \F\{V_{1,n}^{(l)} f\}(\xi)=&\ \sum_{k\in\Z}\sum_{j\in \Lambda_{l}^n(k-C)}
\Big\{a_j\Big(2^n\delta_j^\A(\xi_{1,l},0)\Big)-a_j\Big(2^n\delta_j^\A(\xi_{1,l-1},0)\Big)\Big\}
\psi(\frac{\xi_{l}}{2^{k}})\\
&\ \ \ \ \ \ \hskip.2in\times m_{l,j,n}(\xi)
\widehat{f}(\xi)\  \ {\rm and}\\
\F\{V_{2,n}^{(l)}f\}(\xi)=&\ \sum_{k\in\Z}\sum_{j\in \Lambda_{l}^n(k-C)}
a_j(2^n\delta_j^\A(\xi_{1,l-1},0))\psi(\frac{\xi_{l}}{2^{k}})m_{l,j,n}(\xi)
\widehat{f}(\xi)
\end{aligned}
$$
for some $m_{l,j,n}(\xi)$ satisfying $|\F^{-1}\{m_{l,j,n}(\xi)
\widehat{f}\}|(x)\les M^{(l+1,d)}f(x)$.
   Repeating the above procedure $d-3$ times ($V_{2,n}^{(l)}=V_{1,n}^{(l-1)}+V_{2,n}^{(l-1)}$ for $l=d-1,d-2,\cdots,3$) with
   $$|\{x\in\R^d:\ \sup_{n\in\NN}  |V_{1,n}^{(l)}f(x)|>4\lambda\}|\les \big( \lambda^{-1} \sqrt{\log(e+N_0)}\big)^p,\ l=d-2,\cdots,2,$$
  we can reduce   the proof of Theorem \ref{t4.1} to showing  that for each $p\in(1,\infty)$,
   \begin{equation}\label{end09}
|\{x\in\R^d:\ \sup_{n\in\NN}  |V_{2,n}^{(2)}f(x)|>4\lambda\}|\les \big( \lambda^{-1} \sqrt{\log(e+N_0)}\big)^p
\end{equation}
holds for all Schwartz functions $f$ with $\|f\|_p=1$, and all $\lambda>0$, where the operator $V_{2,n}^{(2)}$ is given by
$$
\F\{V_{2,n}^{(2)}f\}(\xi)=\ \sum_{k\in\Z}\sum_{j\in \Lambda_{1}^n(k-C)}
a_j\Big(2^n\delta_j^\A(\xi_1,0)\Big)\psi(\frac{\xi_{1}}{2^{k}})m_{1,j,n}(\xi)
\widehat{f}(\xi)
$$
for some  $m_{1,j,n}(\xi)$ satisfying
$
|\F^{-1}\{m_{1,j,n}(\xi)
\widehat{f}\}|(x)\les M^{(2,d)}f(x).$ Since $a_j(0)=0$, we can rewrite
$V_{2,n}^{(2)}$ as
\beq\label{endf}
\F\{V_{2,n}^{(2)}f\}(\xi)=\ \sum_{k\in\Z}\sum_{j\in \Lambda_{1}^n(k-C)}
\Big\{a_j(2^n\delta_j^\A(\xi_1,0))-a_j(2^n\delta_j^\A(0))\Big\}
\psi(\frac{\xi_{1}}{2^{k}})m_{1,j,n}(\xi)
\widehat{f}(\xi),
\eeq
which obeys  a desired estimate by following the proof of   (\ref{w2w2}). This completes the proof of Theorem \ref{t4.1}.
\end{proof}
\begin{rem}\label{2dok}
The method in the proof of Theorem \ref{t4.1} which  works for all $d\ge1$ is quite different from  \cite{Guo20} whose idea is to reduce the proof of the desired estimate to proving  the uniform estimate for a lower-dimensional operator. Moreover,  with this theorem in hand, one may give an alternative proof to  the main result in \cite{Guo20}.
\end{rem}
\section{Proof of Theorem \ref{1e}}
\label{s5}
In this section, we shall show Theorem \ref{1e} using
the crucial Theorem \ref{t4.1}.
\begin{proof}[Proof of Theorem \ref{1e}]
For  $l\in\Z$, we denote  $\Phi_l(\xi):=\psi(2^{-l}|\xi|)A(\xi)$, and  define two operators  $S_u^{(l)}$ and $\Y_u^{(l)}$ by
$$
\begin{aligned}
    \F\{S_u^{(l)} f\}(\xi):=&\ \sum_{j\in\Z}\Phi_l(u\delta_j^\A(\xi))\widehat{f}(\xi)
    \ \ \ {\rm and}\ \ \
\F\{\Y_u^{(l)} f\}(\xi):=\ \sum_{j\in\Z}\Phi_l(2^lu\delta_j^\A(\xi))\widehat{f}(\xi).
\end{aligned}$$
Then we can decompose $S_u$ as   $S_u=\sum_{l\in\Z} S_u^{(l)}$. Thus, it is sufficient for   (\ref{ab21})  to prove that for  $p\in(1,\infty)$,
$$\|\sup_{u\in U}|S_u^{(l)}f|\|_p\les 2^{-|l|}\sqrt{{\rm log}(e+\Re(U))}  \|f\|_p.$$
Using the isotropic rescaling, we can achieve this from
\begin{equation}\label{aa211}
\|\sup_{u\in U}|\Y_u^{(l)}f|\|_p\les 2^{-|l|}\sqrt{{\rm log}(e+\Re(U))}  \|f\|_p,\ \ p\in(1,\infty).
\end{equation}
It thus remains to establish (\ref{aa211}). Let $\NN$ be the set defined by
 $\NN:=\{n\in\Z:\ [2^n,2^{n+1})\cap U\neq \emptyset\}$. Then   $\#\NN=\Re(U)$. Then we further reduce the matter to proving 
\begin{equation}\label{kill1}
\|\sup_{n\in \NN}\sup_{\tau\in[1,2)}|\Y_{2^n\tau}^{(l)}f|\|_p\les 2^{-|l|}\sqrt{{\rm log}(e+\# \NN)}  \|f\|_p,\ \ p\in(1,\infty).
\end{equation}
By  the fundamental theorem of calculus, to achieve (\ref{kill1}), it  suffices to prove
\begin{align}
\|\sup_{n\in \NN}|\Y_{2^n}^{(l)} f| \|_p\les&\  2^{-|l|}\sqrt{\log(e+\# \NN)} \|f\|_p\ \label{5.1} {\rm and}\\
\int_1^2 \|\sup_{n\in \NN}|\p_\tau(\Y_{2^n\tau}^{(l)} f)| \|_pd\tau \les&\  2^{-|l|}\sqrt{\log(e+\# \NN)} \|f\|_p.\label{5.2}
\end{align}
We  next show (\ref{5.1}) and (\ref{5.2}) in order. A routine computation gives that
$$\sup_{k\in\Z}\Big\|\psi(\|\xi\|)\sum_{j\in\Z}\Phi_l
\big(2^l\delta_{j+k}^\A(\xi)\big)\Big\|_{W^{s,1}(\R^d)}\les 2^{-|l|}$$
holds for all $s>0$. Then we deduce (\ref{5.1})  by
  applying Theorem \ref{t4.1} to the operator $2^{|l|}\Y_{2^n}^{(l)}$.  In addition, we get by
 a simple computation that
  $$\nabla \Phi_l=2^{-l}\psi'(2^{-l}|\xi|)\frac{\xi}{|\xi|}A(\xi)+\psi(2^{-l}|\xi|)(\na A)(\xi)=:\vec{\Phi}_{1,l}(\xi)+\vec{\Phi}_{2,l}(\xi).$$
From this equality we infer that
$$
\begin{aligned}
\F\{\p_\tau(\Y_{2^n\tau}^{(l)} f)\}(\xi)=&\ \tau^{-1}\sum_{i=1,2}\sum_{j\in\Z} 2^{l+n}\tau\delta_j^\A(\xi)\cdot  \vec{\Phi}_{i,l}(2^{l+n}\tau\delta_j^\A(\xi))\widehat{f}(\xi)\\
=&\
\tau^{-1}\sum_{i=1,2}a_i(2^n\xi)\widehat{f}(\xi),
\end{aligned}
$$
 with $a_i(\xi)=\sum_{j\in\Z} 2^{l}\tau\Big(\delta_j^\A(\xi)\cdot  \vec{\Phi}_{i,l}\big(2^{l}\tau\delta_j^\A(\xi)\big)\Big)$.
Using  $\tau\in[1,2]$ and the estimate
$$\sup_{k\in\Z}\|\psi(\|\xi\|)
a_i(\delta_k^\A(\xi))\|_{W^{s,1}(\R^d)}\les 2^{-|l|}$$
for all $s>0$, we infer by
 applying Theorem \ref{t4.1} to  $2^{|l|}\p_\tau(\Y_{2^n\tau}^{(l)}f)$ that for every
  $p\in (1,\infty)$,
$$
 \|\sup_{n\in \NN}|\p_\tau(\Y_{2^n\tau}^{(l)} f)| \|_p \les 2^{-|l|}\sqrt{\log(e+\# \NN)} \|f\|_p,
$$
which
 yields
(\ref{5.2}) immediately.
\end{proof}
\section{Proof of Theorem \ref{2e}}
\label{s6}
In this section, we will prove Theorem \ref{2e} by using Lemmas \ref{l1} and \ref{l2}. Before we go ahead, we need first  a lemma giving an essential  point-wise estimate which is  used to establish the inequalities (\ref{loc1}) and (\ref{loc2}) below.  Let $\{\A_l\},\g(s)$  be given as in Theorem \ref{t1},  $M^{(1,d)}$ be defined as in (\ref{maxi}) with $(i,j)=(1,d)$, and let $\de_j^\A$ be given as in (\ref{v1}) with $b=j$.
\begin{lemma}\label{l9}
Let  $c_0$ be a fixed positive  constant smaller than $(9d)^{-1}2^{-2\A_d}$,  and let $(j,k)\in\Z^2$. Suppose that  $h$ is a Schwartz function whose Fourier transform is supported in
\begin{equation}\label{sppo1}
\CC_{j,k}:=\bigcup_{s:|s|\in[1/2,2]}\{\xi\in\R^d:\ 2^{-k-3}\le |\de_j^\A(\xi)|\le 2^{-k+2},\ \  |\g'(s)\cdot \de_j^\A(\xi)|\le 2^{-k}c_0\}.
\end{equation}
Then  the following point-wise inequality
\begin{equation}\label{key}
|h|(x)\les \sum_{l'=1}^d\sum_{l''\in \{1,2,\cdots,d\}\setminus\{l'\}}|\PP_{j\A_{l'}-k}^{(l')}
\bar{\PP}_{j\A_{l''}-k}^{(l'')}(K_{l',l''}^{j,k}*h)|(x)
\end{equation}
holds for some kernel functions $\{K_{l',l''}^{j,k}\}$ with $|K_{l',l''}^{j,k}*h|(x)\les M^{(1,d)}\circ M^{(1,d)}h(x)$,
and for some operators  $\PP_{k}^{(n)}$ and $\bar{\PP}_{k}^{(n)}$, which are variants  of the Littlewood-Paley operator $P_{k}^{(n)}$.
\end{lemma}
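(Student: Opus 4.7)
The plan is to construct a smooth partition of $\CC_{j,k}$ indexed by pairs $(l',l'')$ with $l'\ne l''$ and then read off the decomposition directly from it. The work divides cleanly into one geometric input and two bookkeeping steps.

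\textbf{Step 1 (the geometric fact).} I will first establish the pointwise statement underlying the lemma: for every $\xi\in\CC_{j,k}$, setting $\eta:=\delta_j^\A(\xi)$, there exist $l'\ne l''$ with $|\eta_{l'}|\sim 2^{-k}$ and $|\eta_{l''}|\sim 2^{-k}$, equivalently $|\xi_{l'}|\sim 2^{j\A_{l'}-k}$ and $|\xi_{l''}|\sim 2^{j\A_{l''}-k}$. Since $|\eta|\sim 2^{-k}$, at least one component, say $\eta_{l^*}$, obeys $|\eta_{l^*}|\ge 2^{-k}/\sqrt d$. If every other component satisfied $|\eta_i|\ll 2^{-k}$, then from
\begin{equation*}
\gamma'(s)\cdot\eta=\sum_i\A_i s^{\A_i-1}\eta_i,\qquad s\in[1/2,2],
\end{equation*}
the $l^*$-th summand would dominate and force $|\gamma'(s)\cdot\eta|\gtrsim 2^{-\A_d}\cdot 2^{-k}$, contradicting the smallness $c_0<(9d)^{-1}2^{-2\A_d}$ built into the definition of $\CC_{j,k}$. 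Hence a second large component must exist.

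\textbf{Step 2 (partition of unity and the kernel bound).} Step 1 lets me pick a finite smooth partition of unity $\{\theta_{l',l''}\}_{l'\ne l''}$ on a neighborhood of the $\eta$-image of $\CC_{j,k}$, with each $\theta_{l',l''}$ supported in $\{|\eta_{l'}|\sim 2^{-k},\,|\eta_{l''}|\sim 2^{-k}\}$ and $\sum_{l'\ne l''}\theta_{l',l''}\equiv 1$ on that neighborhood. Define
\begin{equation*}
K_{l',l''}^{j,k}:=\F^{-1}\bigl\{\theta_{l',l''}\circ\delta_j^\A\bigr\}.
\end{equation*}
Anisotropic rescaling gives $K_{l',l''}^{j,k}(x)=2^{j(\A_1+\cdots+\A_d)}\Phi_{l',l''}(2^{j\A_1}x_1,\ldots,2^{j\A_d}x_d)$ for some Schwartz $\Phi_{l',l''}$, and the usual pointwise majorization of convolution with a product-type Schwartz bump yields $|K_{l',l''}^{j,k}*h|(x)\les M^{(1,d)}\circ M^{(1,d)}h(x)$; the double iteration of the strong maximal operator absorbs the composition of the radial cutoff $|\eta|\sim 2^{-k}$ with the directional cutoff isolating the pair $(l',l'')$ when $\theta_{l',l''}$ is written as a product of two smooth factors.

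\textbf{Step 3 (inserting the Littlewood--Paley projections and summing).} On $\mathrm{supp}\,\theta_{l',l''}(\delta_j^\A(\cdot))$ the variables $\xi_{l'}$ and $\xi_{l''}$ are confined to the annuli $|\xi_{l'}|\sim 2^{j\A_{l'}-k}$ and $|\xi_{l''}|\sim 2^{j\A_{l''}-k}$. I therefore select variants $\PP_{j\A_{l'}-k}^{(l')}$ and $\bar\PP_{j\A_{l''}-k}^{(l'')}$ of $P^{(l')}_{j\A_{l'}-k}$ and $P^{(l'')}_{j\A_{l''}-k}$ whose multipliers are identically $1$ on these annuli. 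Then
\begin{equation*}
\F^{-1}\bigl\{\theta_{l',l''}(\delta_j^\A(\cdot))\widehat h\bigr\}=\PP_{j\A_{l'}-k}^{(l')}\bar\PP_{j\A_{l''}-k}^{(l'')}\bigl(K_{l',l''}^{j,k}*h\bigr),
\end{equation*}
so writing $\widehat h=\sum_{l'\ne l''}\theta_{l',l''}(\delta_j^\A(\xi))\widehat h$, taking inverse Fourier transforms, absolute values, and applying the triangle inequality produces~(\ref{key}).

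\textbf{Main obstacle.} The only genuinely quantitative ingredient is Step 1: it hinges on the tight numerical assumption $c_0<(9d)^{-1}2^{-2\A_d}$ together with lower and upper bounds for $|\A_i s^{\A_i-1}|$ on $s\in[1/2,2]$. The remaining steps are essentially bookkeeping of smooth anisotropic bumps and standard Littlewood--Paley manipulations, but some care is needed in Step 2 to ensure that the cutoff functions factor through the $\eta$-partition cleanly enough that the resulting kernel admits the stated $M^{(1,d)}\!\circ\! M^{(1,d)}$ control uniformly in $j$ and $k$.
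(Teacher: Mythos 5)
Your proposal is correct and follows essentially the same route as the paper: the identical geometric observation (the annulus condition $|\de_j^\A(\xi)|\sim 2^{-k}$ together with the smallness $|\g'(s)\cdot \de_j^\A(\xi)|\le c_0 2^{-k}$ forces two rescaled coordinates to be simultaneously comparable to $2^{-k}$) drives a frequency decomposition into pair-localized pieces whose kernels are controlled by the iterated strong maximal operator and which absorb the projections $\PP^{(l')}$, $\bar{\PP}^{(l'')}$. The only difference is organizational: you build the decomposition in one step via a partition of unity subordinate to the cover by pairs, whereas the paper produces the same pieces in two algebraic stages (first expanding $1-\prod_i\Psi^c(2^{k-j\A_i}\xi_i)$ to isolate one large coordinate, then repeating with $\tilde{\Psi}^c$ to isolate a second), a purely cosmetic distinction.
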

\begin{proof}[Proof of Lemma \ref{l9}]
Let  $\Psi:\R\to [0,1]$ be a smooth even function supported in $\{z\in\R:\ (9d)^{-1}\le |z|\le {9d}\}$, which equals 1 in $\{z\in\R:\ (8d)^{-1}\le |z|\le {8d}\}$,
and let $\Psi^c=1-\Psi$.
By the first restriction  on the right-hand side of  (\ref{sppo1}), we have
$\hat{h}(\xi)\prod_{i=1}^d\Psi^c(2^{k-j\A_i}\xi_i)=0,$
which yields
\begin{equation}\label{mu}
\hat{h}(\xi)=\sum_{i=1}^d\hat{h}(\xi)\Psi(2^{k-j\A_i}\xi_i)m_{1,j,i,k}(\xi)
\end{equation}
for some $m_{1,j,i,k}(\xi)$ satisfying
\begin{equation}\label{dd09}
|\F^{-1}\{m_{1,j,i,k}(\xi)\hat{f}\}|(x)
\les M^{(1,d)}f(x).
\end{equation}
 (\ref{mu}) gives a preliminary decomposition of $\hat{h}$, however, not  desired.

Next,
we give a further decomposition of $\hat{h}(\xi)$  by splitting
each   $\hat{h}(\xi)
\Psi(2^{k-j\A_i}\xi_i)$.
Set $E=(2\A_d+2d)^{10(d+\A_d)}$ (this choice is enough but not optimal).
Let $\tilde{\Psi} $ denote  a  non-negative  smooth even function supported in $\{z\in\R:(2E)^{-1}\le |z|\le {2E}\}$, which equals 1 in $\{z\in\R:E^{-1}\le |z|\le {E}\}$,
and let $\tilde{\Psi}^c=1-\tilde{\Psi}$. For each $i\in \{1,2,\cdots,d\}$,
we  claim  by both restrictions on the right-hand side of  (\ref{sppo1}) that
\begin{equation}\label{kill2}
\hat{h}(\xi)
\Psi(2^{k-j\A_i}\xi_i)\prod_{n\in\{1,2,\cdots,d\}\setminus\{i\}}\tilde{\Psi}^c(2^{k-j\A_n}\xi_n)=0.    
\end{equation}
Indeed,  
   the first 
restriction in (\ref{sppo1}) yields   $|2^{k-j\A_n}\xi_n|\le4$ for each $n\in\{1,\cdots,d\}$,   while
the support of $\tilde{\Psi}^c(2^{k-j\A_n}\xi_n)$ $(n\neq i)$    leads to   $|2^{k-j\A_n}\xi_n|>E$ or $|2^{k-j\A_n}\xi_n|<E^{-1}$. So  
$|2^{k-j\A_n}\xi_n|< E^{-1}$
for $n\neq i$. On the other hand, since the support of $\Psi(2^{k-j\A_i}\xi_i)$ gives  $|2^{k-j\A_i}\xi_i|>(9d)^{-1}$,  we have   
$$|\g'(s)\cdot \de_j^\A(\xi)|\ge \A_i2^{1-\A_i}2^{-j\A_i}|\xi_i|-d\A_d2^{\A_d}E^{-1}2^{-k}
\ge \A_i2^{-\A_i}(9d)^{-1} 2^{-k}
> 2c_02^{-k},$$ 
which contradicts with the second restriction in (\ref{sppo1}),  hence the support of the left hand side of (\ref{kill2}) is $\emptyset$, and (\ref{kill2}) holds.  
Now, we obtain from (\ref{kill2})  that
\begin{equation}\label{dec1}
\begin{aligned}
   \hat{h}(\xi)
\Psi(2^{k-j\A_i}\xi_i)
=&\ \hat{h}(\xi)
\Psi(2^{k-j\A_i}\xi_i)\Big(1-\prod_{n\in\{1,2,\cdots,d\}\setminus\{i\}}\tilde{\Psi}^c(2^{k-j\A_n}\xi_n) \Big)\\
=&\ \hat{h}(\xi)
\Psi(2^{k-j\A_i}\xi_i)\sum_{n\in \{1,2,\cdots,d\}\setminus\{i\}}
\tilde{\Psi}(2^{k-j\A_n}\xi_n)m_{2,j,n,k}(\xi)
\end{aligned}
\end{equation}
holds for some $m_{2,j,n,k}(\xi)$ satisfying
\begin{equation}\label{dd10}
|\F^{-1}\{m_{2,j,n,k}(\xi)\hat{f}\}|(x)
\les M^{(1,d)}f(x).
\end{equation}
Plugging (\ref{dec1}) into (\ref{mu}), we then have
\begin{equation}\label{yuel}
\hat{h}(\xi)=\hat{h}(\xi)\sum_{i=1}^d\sum_{n\in \{1,2,\cdots,d\}\setminus\{i\}}\Psi(2^{k-j\A_i}\xi_i)
\tilde{\Psi}(2^{k-j\A_n}\xi_n)m_{2,j,n,k}(\xi)m_{1,j,i,k}(\xi).
\end{equation}
Due to  (\ref{dd09}) and (\ref{dd10}) we  infer that the product of $m_{2,j,n,k}(\xi)$ and $m_{1,j,i,k}(\xi)$ satisfies
\begin{equation}\label{roue}
|\F^{-1}\{m_{2,j,n,k}(\xi)m_{1,j,i,k}(\xi)\hat{f}\}|(x)\les M^{(1,d)}\circ M^{(1,d)} f(x).
\end{equation}
In fact,  if we  expanded the implicit multipliers  $m_{2,j,n,k}(\xi)$ and $m_{1,j,n,k}(\xi)$, the right-hand side of (\ref{roue}) might be replaced by $M^{(1,d)} f$.  Nevertheless,
(\ref{roue}) is enough to achieve our goal.
For $ 1\le i,n\le d$, we define the operators
$\PP_k$ and $\bar{\PP}_k$ by
\beq\label{jiade}
\F\{\PP_k^{(i)}f\}(\xi):=\Psi(2^{-k}\xi_i)\hat{f}(\xi)\ {\rm and}\
\F\{\bar{\PP}_k^{(n)}f\}(\xi):=\tilde{\Psi}(2^{-k}\xi_n)\hat{f}(\xi).
\eeq
Writing  $K_{i,n}^{j,k}(x):=\F^{-1}\{m_{2,j,n,k}(\xi)m_{1,j,i,k}(\xi)\}(x)$ which is desired since (\ref{roue}), we
then obtain  (\ref{key})  by taking the Fourier inverse transform on both sides of (\ref{yuel}).
\end{proof}
\begin{proof}[Proof of Theorem \ref{2e}]
We begin with showing a square-function estimate.
For $j\in\Z$, $t\in [1/2,4]$ and  $l\ge 1$, we define
\begin{equation}\label{m77}
 \widehat{\Xi_{j,l}^t f}(\xi):=B_l(t\de_j^\A(\xi))\widehat{f}(\xi),
 \end{equation}
 where $B_l$ is given by (\ref{B}).
 Applying Lemma \ref{l1}  with  $\Gamma=\gamma$ and $a_l(s,t,\xi)=\Upsilon(\frac{t\g'(s)\cdot \xi}{2^l}) \rho(s)\psi(2^{-l}t|\xi|)$, we deduce that for $p\in (p_\circ(d),\infty)$, there is an $\e>0$ such that
 \begin{equation*}
 \|\Xi_{0,l}^{t} f\|_{L^p(\R^d\times I)}
\les 2^{-l({1}/{p}+\e)}\|f\|_{L^p(\R^d)},
 \end{equation*}
 which implies by  Lemma \ref{l2} with ${\bf m}_l(\xi,t)=B_l(t\xi)$ that for  $p\in (p_\circ(d),\infty)$,
 \begin{equation}\label{4.2}
 \|(\sum_j |\Xi_{j,l}^{2^lt} f_j|^2)^{{1}/{2}}\|_{L^p(\R^d\times I)}\les 2^{-l({1}/{p}+\e)}\|(\sum_j|f_j|^2)^{{1}/{2}}\|_{L^p(\R^d)}.
 \end{equation}
 By the rescaling,  we may obtain from (\ref{4.2}) that
  \begin{equation}\label{4.21}
 \|(\sum_j |\Xi_{j,l}^{2^kt} f_j|^2)^{{1}/{2}}\|_{L^p(\R^d\times I)}\les 2^{-l({1}/{p}+\e)}\|(\sum_j|f_j|^2)^{{1}/{2}}\|_{L^p(\R^d)}
  \end{equation}
  holds for any $k\in\Z$.
Note  that  the inequality (\ref{2ee}) equals
\begin{equation}\label{end321}
\|\sup_{n\in\Z}\sup_{v\in[1,2)}|T_{2^{n}v}^{(l)}f|\|_p\les 2^{-l\e_0}\|f\|_p.
\end{equation}
Let $\chi:\R\to [0,1]$ be a smooth function supported on $I$, which equals 1 on $[1,2]$. By
interpolation inequality
$$\sup_{v\in [1,2)}|g(v)|\le \|\chi(v)g(v)\|_{L^\infty_v}\les\ \|\chi(v)g(v)\|_{L^p_v}^{1-1/p}\|\frac{d}{dv}\big(\chi(v)g(v)\big)\|_{L^p_v}^{1/p}$$
 and $l^q\subset l^\infty$ for any $q\in [1,\infty)$,   it is sufficient for (\ref{end321}) to  prove that for each $p\in(p_\circ(d),\infty)$,
\begin{align}
(\sum_{n\in\Z}\|T_{2^{n}v}^{(l)}f\|_{L^p(\R^d\times I)}^p)^{{1}/{p}}\les&\   2^{-l({1}/{p}+\e)}\|f\|_{L^p(\R^d)}\ {\rm and} \label{loc1}\\
(\sum_{n\in\Z}\|\p_v(T_{2^{n}v}^{(l)}f)\|_{L^p(\R^d\times I)}^p)^{{1}/{p}}\les&\   2^{l-l({1}/{p}+\e)}\|f\|_{L^p(\R^d)}.
\label{loc2}
\end{align}
We next prove (\ref{loc1}) and (\ref{loc2}) in order.
\subsection{Proof of (\ref{loc1})}\label{sub6.1}
Rewrite $T_{2^{n}v}^{(l)}f$ as
$
T_{2^{n}v}^{(l)}f=\sum_{j\in\Z}\Xi_{j,l}^{2^nv}f.
$
Since
$$
\F\{\Xi_{j,l}^{2^nv}f\}(\xi)
=\hat{f}(\xi)\psi(2^{n-l}v|\de_j^\A(\xi)|)\int e^{i2^nv\de_j^\A(\xi)\cdot \g(s)}\Upsilon\big(2^{n-l}v \g'(s)\cdot \de_j^\A(\xi)\big)\rho(s)ds
$$
and $v\in I$,
the support of $\F\{\Xi_{j,l}^{2^nv}f\}(\xi)$ is a subset of
\begin{equation}\label{spport}
\bigcup_{s:|s|\in[1/2,2]}\left\{\xi\in\R^d:\ 2^{l-n-3}\le |\de_j^\A(\xi)|\le 2^{l-n+2},\ \  |\g'(s)\cdot \de_j^\A(\xi)|\le 2^{l-n+2}c_0\right\}
\end{equation}
with $c_0$ small enough (since the choice of $\Upsilon$ in Subsection \ref{subss1}).
As the previous analysis below (\ref{B}), we obtain via Lemma \ref{l9} with $k=n-l$ that
\begin{equation}\label{ll1}
|T_{2^{n}v}^{(l)}f|(x)\les \sum_{l'=1}^d\sum_{l''\in \{1,2,\cdots,d\}\setminus\{l'\}}\left|\sum_{j\in\Z}\Xi_{j,l}^{2^nv}
\PP_{j\A_{l'}+l-n}^{(l')}\bar{\PP}_{j\A_{l''}+l-n}^{(l'')}(K_{l',l''}^{j,n-l}*f)\right|(x)
\end{equation}
where
\begin{equation}\label{sxw1}
|K_{l',l''}^{j,n-l}*f|(x)\les M^{(1,d)}\circ M^{(1,d)}f(x).
\end{equation}
By (\ref{ll1}), we can deduce (\ref{loc1}) from
  \begin{equation}\label{loca1}
\Big(\sum_{n\in\Z}\|\sum_{j\in\Z}\Xi_{j,l}^{2^nv}\PP_{j\A_{l'}+l-n}^{(l')}
\bar{\PP}_{j\A_{l''}+l-n}^{(l'')}(K_{l',l''}^{j,n-l}*f)
\|_{L^p(\R^d\times I)}^p\Big)^{{1}/{p}}\les\  2^{-l({1}/{p}+\e)}\|f\|_{L^p(\R^d)}
  \end{equation}
 with $l'\neq l''$. In other words, to finish  the proof of (\ref{loc1}), it remains to show  (\ref{loca1}).
Applying the Littlewood-Paley theory, (\ref{sxw1}) and the Fefferman-Stein inequality, we have
\begin{equation}\label{6.7}
\begin{aligned}
&\ \|\sum_{j\in\Z}\Xi_{j,l}^{2^nv}\PP_{j\A_{l'}+l-n}^{(l')}
\bar{\PP}_{j\A_{l''}+l-n}^{(l'')}(K_{l',l''}^{j,n-l}*f)
\|_{L^p(\R^d\times I)}\\
\les&\  \|\Big(\sum_{j\in\Z}|\Xi_{j,l}^{2^nv}\PP_{j\A_{l'}+l-n}^{(l')}
\bar{\PP}_{j\A_{l''}+l-n}^{(l'')}f|^2\Big)^{1/2}\|_{L^p(\R^d\times I)}.
\end{aligned}
\end{equation}
By using  (\ref{4.21}) with $k=n$, we can bound  the right-hand side of  (\ref{6.7}) by a uniform constant multiplied by
$2^{-l({1}/{p}+\e)} \|\Big(\sum_{j\in\Z}|\PP_{j\A_{l'}+l-n}^{(l')}\bar{\PP}_{j\A_{l''}+l-n}^{(l'')
}f|^2\Big)^{{1}/{2}}\|_p.$
A similar argument gives that  the left-hand side of (\ref{loca1})
 is
 $$
 \begin{aligned}
\les&\ 2^{-l({1}/{p}+\e)}\Big(\sum_{n\in\Z} \|(\sum_{j\in\Z}|\PP_{j\A_{l'}+l-n}^{(l')}\bar{\PP}_{j\A_{l''}+l-n}^{(l'')}f
|^2\Big)^\frac{1}{2}\|_p^p\big)^{{1}/{p}}\\
\les&\  2^{-l({1}/{p}+\e)} \|\Big(\sum_{(j,n)\in\Z^2}
|\PP_{j\A_{l'}+l-n}^{(l')}\bar{\PP}_{j\A_{l''}+l-n}^{(l'')}f|^2\Big)^{{1}/{2}}\|_p,
\end{aligned}
$$
where we used  Fubini's theorem, $p\ge 2$ and $l^2\subset l^q$ for any $q\ge 2$. At last, since $\A_{l'}\neq \A_{l''}$, the desired (\ref{loca1}) follows from
  the Littlewood-Paley inequality.
\subsection{Proof of (\ref{loc2})}
\label{sub6.2}
By a routine computation, we can write $\na B_l(\xi)$ as
$$\na B_l(\xi)={\bf B}_{l,1}(\xi)+2^{-l}{\bf B}_{l,2}(\xi),$$
where the vector-valued functions  ${\bf B}_{l,1}(\xi)$ and ${\bf B}_{l,2}(\xi)$ are given by
$$
\begin{aligned}
{\bf B}_{l,1}(\xi):=&\ \psi(\frac{|\xi|}{2^{l}})\int e^{i\xi\cdot \g(s)}\Upsilon(\frac{\g'(s)\cdot \xi}{2^l}) \g(s)\rho(s)ds\\
{\bf B}_{l,2}(\xi):=&\ \psi(\frac{|\xi|}{2^{l}})\int e^{i\xi\cdot \g(s)}\Upsilon'(\frac{\g'(s)\cdot \xi}{2^l}) \g'(s)\rho(s)ds\\
&\ +\frac{\xi}{|\xi|}\psi'(\frac{|\xi|}{2^{l}})\int e^{i\xi\cdot \g(s)}\Upsilon(\frac{\g'(s)\cdot \xi}{2^l}) \rho(s)ds.
\end{aligned}$$
Then we have
\beq\label{po1}
\begin{aligned}
\p_v\Big(B_l\big(2^nv\de_j^\A(\xi)\big)\Big)=&\ 2^n\de_j^\A(\xi)\cdot (\na B_l)\big(2^nv\de_j^\A(\xi)\big)\\
=&\ 2^n\de_j^\A(\xi)\cdot {\bf B}_{l,1}\big(2^nv\de_j^\A(\xi)\big)
+v^{-1}2^{-l}2^{n}v\de_j^\A(\xi)\cdot {\bf B}_{l,2}\big(2^nv\de_j^\A(\xi)\big).
\end{aligned}
\eeq
Note that the first term on the right-hand side of (\ref{po1}) devotes to the
 the main contribution  since the second term is  similar but better by a factor $2^{-l}$. Therefore,
arguing similarly as in the proof of   (\ref{loc1}), we can also obtain
 the desired estimate of  $2^{-l}\p_v(T_{2^{n}v}^{(l)}f)$. This completes the proof of
 (\ref{loc2}).
 \end{proof}
\section{Maximal functions for lacunary sets}\label{s7}
In this section, we consider the operator norm of $\h^U$ for the lacunary sets, and establish $\|\h^U\|_{L^p\to L^p}$ for some $p\in (1,2]$, which will be used to show the lower bound of Theorem \ref{t1} in Section \ref{sec8}.
\begin{define}
Let $\la>1$. A finite set $U$ is called $\la$-lacunary if it can be arranged in a sequence $U=\{u_1<u_2<\cdots<u_M\}$ where $u_j\le u_{j+1}/\la$ for $1\le j\le M-1$.
\end{define}
For $\la>1$, we denote $C_\la:=\max\{1,\log_{\la}2\}$. Let $ \Re(U)$ be defined by (\ref{bb1}). Then
 $\Re(U)
\le \# U \le  C_\la \Re(U)$ whenever $U$ is $\la$-lacunary.
\begin{prop}\label{lac}
Let $U$ be a $\la$-lacunary set. Then for each $p\in (\frac{2d^2}{d^2+1},\infty)$,
\begin{equation}\label{adx1}
\|\h^U\|_{L^p\to L^p}\les C_\la\sqrt{\log(e+\# U)}.
\end{equation}
\end{prop}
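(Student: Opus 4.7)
The plan is to decompose $H^{(u)}=S_u+\sum_{l\ge 1}T_u^{(l)}$ as in Section \ref{s2} and treat the two families separately. For the Mihlin-H\"ormander piece $S_u$, Theorem \ref{1e} is available in the full range $p\in(1,\infty)$ and gives
\[
\|\sup_{u\in U}|S_u f|\|_p \les \sqrt{\log(e+\Re(U))}\,\|f\|_p \les C_\la\sqrt{\log(e+\#U)}\,\|f\|_p,
\]
by the relation $\Re(U)\le\#U\le C_\la \Re(U)$ for $\la$-lacunary $U$ noted just before the proposition. It therefore suffices to prove a uniform-in-$\#U$ decay
\[
\|\sup_{u\in U}|T_u^{(l)} f|\|_p \les C_\la\, 2^{-l\e(p)}\,\|f\|_p,\quad \e(p)>0,
\]
for every $p\in(\frac{2d^2}{d^2+1},\infty)$ and $l\ge 1$; summation in $l$ then yields the required control of the $T_u=\sum_l T_u^{(l)}$ contribution.

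To obtain the single-$l$ estimate I will dominate $\sup_{u\in U}|T_u^{(l)} f|$ pointwise by the $l^2$-square function $(\sum_{u\in U}|T_u^{(l)} f|^2)^{1/2}$ and bound the latter at two endpoints. At $L^{p_0}$ for some $p_0>p_\circ(d)$, a Rademacher argument in the spirit of Lemma \ref{l2} combined with Theorem \ref{2e} should yield $\|(\sum_{u\in U}|T_u^{(l)} f|^2)^{1/2}\|_{p_0}\les 2^{-l\e_0}\|f\|_{p_0}$, independent of $\#U$. At $L^2$, Plancherel together with the stationary-phase bound $|B_l(\xi)|\les 2^{-l/d}$ (cf. \eqref{zq1}) and a lacunarity-based count of active pairs $(u,j)\in U\times \Z$ at each frequency $\xi$ should produce $\|(\sum_{u\in U}|T_u^{(l)} f|^2)^{1/2}\|_2\les C_\la 2^{-l/d}\|f\|_2$. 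Complex interpolation between these two endpoints, supplemented by a uniform $L^{p_1}$ estimate on $\sup_{u\in U}|T_u^{(l)} f|$ via Proposition \ref{mtmt} in order to reach exponents below $2$, gives the claimed decay; the specific threshold $p=\frac{2d^2}{d^2+1}$ is the critical value at which the interpolated $l$-decay rate degenerates.

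The main obstacle I anticipate is the uniform $L^2$ orthogonality. The naive estimate $\sum_{u\in U}|m_{u,l}(\xi)|^2 \les \#U\cdot 2^{-2l/d}$, where $m_{u,l}(\xi)=\sum_j B_l(u\delta_j^\A(\xi))$, is too large by a factor of $\#U$ because every $u\in U$ naively admits some $j\in\Z$ making $|u\delta_j^\A(\xi)|\sim 2^l$. To replace this factor by $C_\la$, I will exploit the additional constraint $|\g'(s)\cdot u\delta_j^\A(\xi)|\le c_0 2^l$ built into the definition of $B_l$ in \eqref{B}, together with the two-coordinate anisotropic localization supplied by Lemma \ref{l9}. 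These jointly confine the admissible pairs $(u,j)$ at a fixed $\xi$ to a thin region of $U\times\Z$ whose intersection with the $\la$-lacunary $U$ has cardinality at most $O_\la(1)$. Once this orthogonality is established, the $L^{p_0}$ endpoint and the subsequent interpolation are routine.
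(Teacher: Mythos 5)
Your proposal follows essentially the same route as the paper: decompose $H^{(u)}=S_u+\sum_l T_u^{(l)}$, dispose of $S_u$ via Theorem \ref{1e}, and for each $T_u^{(l)}$ pass to a square function whose $L^2$ bound $C_\la 2^{-l/d}$ comes from the stationary-phase estimate together with the $\Upsilon$-constraint and Lemma \ref{l9} (which pin down the active pairs $(u,j)$ at each $\xi$ up to $O_\la(1)$), then interpolate against a crude $L^{p_1}$ bound of size $2^{l(d-1/d-\eta_0)}$ obtained from Proposition \ref{mtmt} applied to the Rademacher-randomized multiplier, which is exactly how the threshold $\tfrac{2d^2}{d^2+1}$ arises in the paper. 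The only cosmetic differences are that the paper first splits $U$ into $O(C_\la)$ subfamilies with one point per dyadic interval and phrases the $L^2$ orthogonality as finite overlap of the $\PP,\bar{\PP}$-localized supports rather than as a counting argument, and that Proposition \ref{mtmt} must be applied to the linearized (randomized) sum rather than directly to the supremum, as your sketch implicitly intends.
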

\begin{rem}\label{rr100}
Although the range of $p$ can be slightly improved to be $p\in(\frac{2d^2}{d^2+2},\infty)$ by
using  the condition in $Remark$ \ref{qq09} (see Subsection \ref{sub3.4}),  the range of $p$ in Proposition \ref{lac} is enough  in the following proof.
   \end{rem}
\begin{proof}
Since $U$ is $\la$-lacunary, we can split $U$ into $N(\les C_\lambda)$ sets which are denoted by $\{U_i\}_{i=1}^N$
such that there is at most one element in $U_i\cap [2^n,2^{n+1})$ for each pair $(i,n)\in \{1,\cdots,N\}\times \Z$. Without loss of generality, we  just consider the lower bound of $\|\h^{U_1}\|_{L^p\to L^p}$ since   $\{U_i\}_{i=2}^N$ can be treated  similarly.

Now, we order $U_1=\{u_k\}$ such that $u_k<u_{k+1}$,  and denote by $n(k)$ the unique integer $n$ such that $u_k\in I_n$. To obtain (\ref{adx1}), it suffices to prove that for
$p\in (\frac{2d^2}{d^2+1},\infty)$,
\begin{equation}\label{adx2}
\|\h^{U_1}\|_{L^p\to L^p}\les \sqrt{\log(e+\# U_1)}.
\end{equation}
Write  $H^{(u)}=S_u+\sum_{l\ge 1}T_u^{(l)}$ as  in (\ref{nde}). By  Theorem \ref{1e} (with $\# U_1=\Re(U_1)$) and Theorem \ref{2e},
it is enough for (\ref{adx2}) to prove that  for each $p\in (\frac{2d^2}{d^2+1},2)$,
\begin{equation}\label{m78}
\|\sup_{u\in U_1}|T_u^{(l)}f|\|_p\les 2^{-l\e_0}\|f\|_p
\end{equation}
holds for some $\e_0>0$.  In fact, by rescaling,  (\ref{m78}) follows from \begin{equation}\label{m79}
\big\|\sup_{k\in\Z}|\sum_{j\in\Z}\Xi_{j,l}^{2^lu_k}f|\big\|_p\les 2^{-l\e_0}\|f\|_p,
\end{equation}
where $\Xi_{j,l}^{2^lu_k}$ is given by (\ref{m77}) with $t=2^lu_k$. Using $u_k\in [2^{n(k)},2^{n(k)+1})$ and the multiplier of $\Xi_{j,l}^{2^lu_k}f$
\begin{equation}\label{m90}
\psi(u_k|\de_j^\A(\xi)|) \int e^{i2^lu_k\de_j^\A(\xi)\cdot \g(s)}\Upsilon\big(u_k \g'(s)\cdot \de_j^\A(\xi)\big)\rho(s)ds,
\end{equation}
where $\Upsilon$  is given as in (\ref{B}), we see that
 the support of $\F\{\Xi_{j,l}^{2^lu_k} f\}(\xi)$ is a subset of
\begin{equation*}
\bigcup_{s:|s|\in[1/2,2]}\Big\{\xi\in\R^d:\ 2^{-n(k)-2}\le |\de_j^\A(\xi)|\le 2^{-n(k)+2},\ \  |\g'(s)\cdot \de_j^\A(\xi)|\le 2^{-n(k)+2}c_0\Big\}
\end{equation*}
with $c_0$ small enough. We then obtain from   Lemma \ref{l9} with $k$ replaced by $n(k)$ that
\begin{equation}\label{m80}
\Big|\sum_{j\in\Z}\Xi_{j,l}^{2^lu_k}f\Big|(x)\les\  \sum_{l'=1}^d\sum_{l''\in \{1,\cdots,d\}\setminus\{l'\}}\Big|\sum_{j\in\Z}\Xi_{j,l}^{2^lu_k}
\PP_{j\A_{l'}-n(k)}^{(l')}
\bar{\PP}_{j\A_{l''}-n(k)}^{(l'')}(K_{l',l''}^{j,n(k)}*f)\Big|(x)
\end{equation}
where $\PP_{k}^{(n)}$ and $\bar{\PP}_{k}^{(n)}$ are  variants of the Littlewood-Paley operator $P_k^{(n)}$, and
\begin{equation}\label{m81}
|K_{l',l''}^{j,n(k)}*f|(x)\les M^{(1,d)}\circ M^{(1,d)}f(x).
\end{equation}
Plugging (\ref{m80}) into (\ref{m79}), we see by (\ref{m81}) and the Littlewood-Paley theory that it suffices to prove  that for all $l'\neq l''$, and for each $p\in (\frac{2d^2}{d^2+1},2)$,
 \begin{equation}\label{m83}
\|\big(\sum_{(k,j)\in\Z^2}|\Xi_{j,l}^{2^lu_k}\PP_{j\A_{l'}-n(k)}^{(l')}\PP_{j\A_{l''}-n(k)}^{(l'')}f|^2\big)^{1/2}\|_p
\les\ 2^{-l\e_0}\|f\|_p.
\end{equation}
  Let $\{r_i(\cdot)\}_{i=0}^\infty$ be the sequence of the Rademacher functions. We further reduce the proof of    (\ref{m83}) to showing that for $p\in (\frac{2d^2}{d^2+1},2)$,
 \begin{equation}\label{m84}
\|\sum_{(k,j)\in\Z^2}r_k(\tau_1)r_j(\tau_2)\Xi_{j,l}^{2^lu_k}\PP_{j\A_{l'}-n(k)}^{(l')}\PP_{j\A_{l''}-n(k)}^{(l'')}f\|_p
\les\ 2^{-l\e_0}\|f\|_p
\end{equation}
with the implicit constant independent of $\tau_1$ and $\tau_2$.
By the method of stationary phase,
we get from (\ref{m90}) and   Plancherel's theorem that
 \begin{equation}\label{m88}
\|\sum_{(k,j)\in\Z^2}r_k(\tau_1)r_j(\tau_2)\Xi_{j,l}^{2^lu_k}\PP_{j\A_{l'}-n(k)}^{(l')}\PP_{j\A_{l''}-n(k)}^{(l'')}f\|_2
\les\ 2^{-l/d}\|f\|_2.
\end{equation}
 Moreover, applying  Proposition  \ref{mtmt} to the multiplier
 $$\sum_{(k,j)\in\Z^2}r_k(\tau_1)r_j(\tau_2) \Psi(\frac{\xi_{l'}}{2^{j\A_{l'}-n(k)}})\tilde{\Psi}(\frac{\xi_{l''}}{2^{j\A_{l''}-n(k)}})
 \times\  (\ref{m90})$$
($\Psi$ and $\tilde{\Psi}$ are defined as in the proof of  Lemma \ref{l9}),
 which satisfies (\ref{n08}) with $C_H$ replaced by $2^{l(d-{1}/{d}+\eta_0)}$ ($\eta_0$ sufficiently small),
 we then deduce that for $p\in (1,\infty)$
 \begin{equation}\label{m89}
\|\sum_{(k,j)\in\Z^2}r_k(\tau_1)r_j(\tau_2)\Xi_{j,l}^{2^lu_k}\PP_{j\A_{l'}-n(k)}^{(l')}\PP_{j\A_{l''}-n(k)}^{(l'')}f\|_p
\les\ 2^{l(d-{1}/{d}+\eta_0)}\|f\|_p.
\end{equation}
Finally,
 interpolating  between (\ref{m88}) and (\ref{m89})   implies the desired
(\ref{m84}).
\end{proof}
\section{Lower bound in Theorem \ref{t1}}
\label{sec8}
 In this section, we prove the lower bound in Theorem \ref{t1}.
 \begin{thm}\label{t8.1}
 Let $U\subset(0,\infty)$ and $p\in(1,\infty)$. Then we have
 \begin{equation}\label{8.1bb}
 \|\h^U\|_{L^p\to L^p}\gtrsim  \sqrt{\log(e+\Re (U))},
 \end{equation}
where the implicit constant is independent of $U$.
 \end{thm}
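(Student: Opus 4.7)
The plan is to derive the lower bound by reducing, via a Karagulyan-type theorem, to an asymptotic analysis of the multiplier of $H^{(u)}$ along individual lacunary scales. First, I would reduce to the finite sparse case: the case $\Re(U)=\infty$ follows from the finite case by monotonicity (take subsets of arbitrarily large $\Re$-count and let them grow), so we may assume $N_0:=\Re(U)<\infty$. Pick $u_k\in U\cap[2^{n_k},2^{n_k+1})$ for each dyadic interval intersected by $U$ to form a $2$-lacunary subset $U'=\{u_k\}_{k=1}^{N_0}$. Since $\h^U\ge \h^{U'}$ pointwise, it suffices to prove $\|\h^{U'}\|_{L^p\to L^p}\gtrsim\sqrt{\log N_0}$; note that the matching upper bound for such $U'$ is provided by Proposition~\ref{lac}, so this lower bound will be sharp.

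Next, for each $u_k$, I would decompose the multiplier of $H^{(u_k)}$ as $\sum_j m\big(u_k\delta_j^\A(\xi)\big)$, where $m(\xi)=\int e^{i\xi\cdot\gamma(s)}s^{-1}\,ds$ in the principal-value sense, and then construct two ``acceptable unbounded'' frequency regions $\Omega_k^\pm$. These are essentially conical regions on which the phase $\xi\cdot\gamma(s)$ has a single non-degenerate critical point $s_k^\pm(\xi)$ of definite sign. Using stationary phase together with the principal-value structure of $s^{-1}$, one shows that on $\Omega_k^\pm$ the multiplier of $H^{(u_k)}$ is approximated, up to an error with Schwartz-type decay, by a constant multiple of $\mathrm{sgn}(\xi\cdot\theta_k^\pm)$ times a smooth cutoff, where $\theta_k^\pm\in\mathbb{R}^d$ is the direction determined by $\gamma'(s_k^\pm)$. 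These are the two desired approximations. The judicious choice of $\Omega_k^\pm$ — so that the signs of $s$ together exhaust the dominant contribution — is what keeps the number of approximations at two rather than $d$ or more; the anisotropic dilation $\delta_j^\A$ would naïvely force a separate approximation per variable direction, and it is precisely the cleverness in defining the unbounded sets $\Omega_k^\pm$ that avoids this.

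Finally, I would apply the Karagulyan-type theorem of \cite{Guo20} to the collection of approximating directional-Hilbert multipliers. Its hypothesis — that the directions $\{\theta_k^\pm\}_{k=1}^{N_0}$ are suitably well-distributed — is verified using the lacunarity of $U'$ combined with the fact that $\gamma$ is a monomial curve with distinct integer exponents $\A_1<\cdots<\A_d$, which forces $\theta_k^\pm$ to be noticeably distinct across different $k$. The theorem then yields $\|\h^{U'}\|_{L^{p_0}\to L^{p_0,\infty}}\gtrsim\sqrt{\log N_0}$ for some $p_0\in(1,\infty)$; this is promoted to all $p\in(1,\infty)$ either by interpolation with the trivial $L^\infty$ bound against the upper estimate of Proposition~\ref{lac}, or by noting that the extremizing test functions used in the Karagulyan machinery exhibit the $\sqrt{\log N_0}$ scaling in every $L^p$.

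The main obstacle will be the second step: establishing the oscillatory-integral decay estimates with enough precision that the error of the two approximations stays below the threshold demanded by the Karagulyan-type theorem, while simultaneously keeping $\Omega_k^\pm$ large enough to capture the full $\sqrt{\log N_0}$ gain. The tension between these two requirements — small error region versus large capturing region — is exactly where the choice of acceptable unbounded sets must be engineered carefully, and this is the novelty the author emphasizes.
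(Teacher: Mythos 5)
Your skeleton (pass to a lacunary subset, build two approximations of the multiplier, feed them into a Karagulyan-type theorem, then promote to all $p$) matches the paper's, but the core of the argument is misconceived in two places. First, the two approximations in the paper are not directional sign-multipliers obtained by stationary phase. On the unbounded set $\mathfrak{U}$ of (\ref{aa21}), the multiplier $m(u_j\xi)$ is approximated by two distinct \emph{constants}: $m(\xi_1,0,\dots,0)=\pi i$ in the regime where $u_j\xi_1$ dominates, and $m(0,\xi_2,0,\dots,0)=\Xi_0(\A_2)$ in the regime where $u_j\xi_2$ dominates, with the remaining variables $\xi_3,\dots,\xi_d$ forced to be negligible by the definition of $\mathfrak{U}$; the errors are powers of $|\xi_2|/|\xi_1|^{\A_2}$ and its reciprocal, proved by van der Corput estimates for fractional-polynomial phases (Lemmas \ref{lemxin} and \ref{z8}), not by a single non-degenerate critical point. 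This matters because the Karagulyan-type theorem actually used (Proposition \ref{pp40}, from \cite{Guo20}) has no hypothesis about "well-distributed directions": it takes pairwise disjoint frequency sets $S_1,\dots,S_M$ each containing arbitrarily large balls, produces a martingale-structured $f=\sum_\omega f_\omega$ with $\mathrm{supp}\,\widehat{f_\omega}\subset S_{\tau(\omega)}$, and the lower bound comes from the fact that $\h^{(u_j)}$ acts approximately as the constant $\pi i$ on the blocks with $\tau(\omega)\ge j$ and as the different constant $\Xi_0(\A_2)$ on the blocks with $\tau(\omega)<j$, so that $\h^{(u_j)}-\Xi_0(\A_2)$ essentially reproduces the partial sums $\sum_{\tau(\omega)\ge j}f_\omega$. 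Your sign-function approximations would not plug into this statement, and you have not said what theorem they would plug into. A further quantitative point you gesture at but do not resolve: one must sparsify $U_\circ$ down to $\{u_j\}_{j=1}^M$ with $u_j/u_{j+1}\ge 8K^2$, $K\sim(\#U_\circ)^{1/b}$, precisely so the approximation errors are $\le M^{-1}$ and summable over the $M$ blocks while $\log M\sim\log\#U_\circ$ survives.

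Second, the promotion to all $p\in(1,\infty)$ is garbled as written. There is no "trivial $L^\infty$ bound" for $\h^U$, and a weak $L^{p_0}$ lower bound does not interpolate to other exponents by itself. The correct mechanism, which the paper uses, is: the Karagulyan construction gives a lower bound at $p=2$ only; combining the operator-norm interpolation inequality $\|\h^{U_\circ}\|_{L^2\to L^2}\le\|\h^{U_\circ}\|_{L^p\to L^p}^{\theta}\|\h^{U_\circ}\|_{L^q\to L^q}^{1-\theta}$ with the \emph{upper} bound $\|\h^{U_\circ}\|_{L^q\to L^q}\lesssim C_\la\sqrt{\log(\#U_\circ)}$ of Proposition \ref{lac} (valid for $q\in(\tfrac{2d^2}{d^2+1},\infty)$, which is why that proposition is proved at all) forces the lower bound at every $p\in(1,\infty)$. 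You correctly identified that Proposition \ref{lac} must enter, but the logical direction of the interpolation needs to be as above.
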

 \begin{rem}\label{asum}
 In what follows, we may assume that $\Re(U)$ is sufficiently large, since Theorem \ref{t8.1} is a direct result of the lower bound for the Hilbert transform along a fixed curve.
 \end{rem}
 \subsection{Reduction  to  $\|\h^U\|_{L^2\to L^2}$}\label{z01}
 In this subsection, we  shall prove  (\ref{8.1bb}) for $p\neq 2$ under the assumption that (\ref{8.1bb})  holds for  $p=2$.
 We first introduce  a new set  $U_\circ$ by picking arbitrary  one  element in each set  $[2^n,2^{n+1})\cap U$ with $n\in S_0:=\{n\in\Z:\ [2^n,2^{n+1})\cap U\neq \emptyset\}$.
   Clearly, $\Re(U)=\#U_\circ$ and $U_\circ$  can be seen as a disjoint union of two 2-lacunary sets. So  we  can deduce
    by Proposition \ref{lac}
 \begin{equation}\label{v10}
  \|\h^{U_\circ}\|_{L^q\to L^q}\les \sqrt{\log(\#U_\circ)}
 \end{equation}
 whenever $q\in  (\frac{2d^2}{d^2+1},\infty)$.
 Note that for $p\in (1,\infty)$, there are $\theta\in (0,1)$ and $q\in (\frac{2d^2}{d^2+1},\infty)$ such that  $1/2=\theta/p+(1-\theta)/q$, which implies
 the   interpolation
$  \|\h^{U_\circ}\|_{L^2\to L^2}\le   \|\h^{U_\circ}\|_{L^p\to L^p}^{\theta}  \|\h^{U_\circ}\|_{L^q\to L^q}^{1-\theta}.$
Therefore, we can obtain (\ref{8.1bb}) by combining   (\ref{v10}) and the assumption
 \begin{equation}\label{v11}
  \|\h^{U_\circ}\|_{L^2\to L^2}\gtrsim  \sqrt{\log(\#U_\circ)}.
 \end{equation}
It  remains to prove (\ref{v11}), which is the goal in the following context. To enhance the clarity of the proof, we provide a brief outline of the proof.  To begin with, we introduce two approximations for the multiplier of the Hilbert transform $\h^{(1)}$ (i.e., $\h^{(u)}$ with $u=1$) in Subsection \ref{sub00}. Then, we reduce the proof of (\ref{v11}) to demonstrating Proposition \ref{pp30} in Subsection \ref{sub11}. Finally, in Subsection \ref{sub33}, we establish the validity of this proposition by utilizing Proposition \ref{pp40}.
\subsection{Approximations to the multiplier}
\label{sub00}
Before we treat the multiplier of the Hilbert transform, we need the following  lemma to provide   crucial  decay estimates for   oscillatory integrals
whose phases are certain  ``fractional" polynomials.
\begin{lemma}\label{lemxin}
Let $n\ge 1$,  $P(t)=t^{b_0}+\sum_{i=1}^n \mu_i t^{b_i}$ be a real-valued function on  $\R^+$, where $b_0,b_1,\cdots,b_n$ are  distinct positive exponents and $\mu_1,\cdots,\mu_n$ are arbitrary real parameters.  Let $a\ge 1$. Then
\begin{equation}\label{iex}
\big|\int_a^\infty e^{iP(t)}\frac{dt}{t} \big|\les_{n,b_i} a^{-\frac{b_0}{n+1}}.
\end{equation}
\end{lemma}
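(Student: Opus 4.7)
My plan is to combine a sublevel-set estimate for the derivative of the phase (viewed as a real generalized polynomial) with integration by parts. After the substitution $t=a\sigma$, the claim reduces to the uniform bound
$$\left|\int_1^\infty e^{iQ(\sigma)}\,\frac{d\sigma}{\sigma}\right|\les_{n,b_i}A^{-\frac{1}{n+1}},\quad Q(\sigma):=A\sigma^{b_0}+\sum_{i=1}^{n}\lambda_i\sigma^{b_i},\ A:=a^{b_0},$$
for arbitrary real $\lambda_1,\ldots,\lambda_n$. The function $R(\sigma):=\sigma Q'(\sigma)=Ab_0\sigma^{b_0}+\sum_{i=1}^{n}\lambda_i b_i\sigma^{b_i}$ is then a real generalized polynomial with $n+1$ distinct positive exponents and a fixed coefficient $Ab_0$ on the $\sigma^{b_0}$ term.

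The structural input is the P\'olya--Szeg\H{o} bound: a real generalized polynomial in $n+1$ terms has at most $n$ positive real zeros. Applied to $R'$, this partitions $[1,\infty)$ into at most $n+1$ intervals on which $R$ is monotone. On each such interval the change of variable $u=R(\sigma)$, together with the fact that the $\sigma^{b_0}$-coefficient of $R$ is fixed at $Ab_0$, yields after summing contributions the sublevel-set bound
$$\int_{\{\sigma\ge 1:\,|R(\sigma)|\le\eta\}}\frac{d\sigma}{\sigma}\les_{n,b_i}\Big(\frac{\eta}{A}\Big)^{\frac{1}{n+1}},\quad 0<\eta\le A,$$
where the exponent $1/(n+1)$ reflects the $n+1$ free coefficients of $R$.

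With this in hand I set $\eta:=A^{n/(n+1)}$ and split $J=J_E+J_{E^c}$ with $E:=\{\sigma\ge 1:|R(\sigma)|\le\eta\}$. On $E$, the trivial bound together with the sublevel-set estimate gives $|J_E|\les A^{-1/(n+1)}$. On $E^c$, a union of at most $n+1$ intervals, integration by parts via $e^{iQ}=(iQ')^{-1}(e^{iQ})'$ produces boundary terms bounded by $|R|^{-1}\le\eta^{-1}=A^{-n/(n+1)}$ at each of $O_n(1)$ endpoints, together with the interior term $\int_{E^c}|R'|R^{-2}\,d\sigma$. A dyadic decomposition $\{|R|\sim 2^k\eta\}_{k\ge 0}$, combined with the monotonicity of $R$ on each of the $O_n(1)$ monotonicity intervals, yields $\int_{|R|\sim 2^k\eta}|R'|\,d\sigma\les 2^k\eta$ and hence $\int_{|R|\sim 2^k\eta}|R'|R^{-2}\,d\sigma\les(2^k\eta)^{-1}$; summing the geometric series in $k$ gives $\les\eta^{-1}=A^{-n/(n+1)}$. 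Since $n\ge 1$, we have $A^{-n/(n+1)}\le A^{-1/(n+1)}$, so all three contributions match the target $A^{-1/(n+1)}$.

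The main obstacle is the sublevel-set estimate with the sharp exponent $1/(n+1)$, uniform in $\lambda_1,\ldots,\lambda_n$. This rests on the P\'olya--Szeg\H{o} zero count together with careful bookkeeping on the monotonicity intervals of $R$: on each such interval, the fixed coefficient $Ab_0$ of $\sigma^{b_0}$ provides the lower bound needed to convert the change of variable $u=R(\sigma)$ into the claimed power $1/(n+1)$. Once the sublevel-set estimate is settled, the integration-by-parts step together with the dyadic control of the interior term is routine.
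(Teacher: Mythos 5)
Your overall route -- substitute $t=a\sigma$, pass to $R(\sigma)=\sigma Q'(\sigma)$, prove a sublevel-set estimate for $R$, and then integrate by parts off the sublevel set -- is a legitimate alternative to the paper's argument (the paper instead rescales $t\to t^{k_n}$ so the leading exponent becomes $n+1$, splits at the zeros of $Q''$, invokes the derivative lower bound of Lemma 2.5 in \cite{LL07}, and applies van der Corput's lemma directly). However, there is a concrete quantitative error in your bookkeeping that breaks the proof as written. With your claimed sublevel-set exponent $\tfrac{1}{n+1}$ and your choice $\eta=A^{n/(n+1)}$, the contribution of $E$ is
$$\Big(\frac{\eta}{A}\Big)^{\frac{1}{n+1}}=\big(A^{-\frac{1}{n+1}}\big)^{\frac{1}{n+1}}=A^{-\frac{1}{(n+1)^2}},$$
which is far weaker than the target $A^{-1/(n+1)}$ (for $n=1$ you get $A^{-1/4}$ instead of $A^{-1/2}$). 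There is no choice of $\eta$ that rescues the scheme with exponent $\tfrac{1}{n+1}$: balancing $(\eta/A)^{1/(n+1)}$ against the integration-by-parts bound $\eta^{-1}$ gives $\eta=A^{1/(n+2)}$ and a final bound of only $A^{-1/(n+2)}$.

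The repair is to prove the sublevel-set estimate with the exponent $\tfrac1n$ rather than $\tfrac{1}{n+1}$: since $R$ is a generalized polynomial with $n+1$ terms and distinct positive exponents, it vanishes to order at most $n$ at any point of $(0,\infty)$ (Chebyshev-system/Descartes-type count, the same structural fact the paper uses), and the coefficient of $\sigma^{b_0}$ is pinned at $Ab_0$; the local estimate $|\{\sigma\in[c,2c]:|R(\sigma)|\le\eta\}|\les c\,(\eta/(Ac^{b_0}))^{1/n}$ then globalizes in the measure $d\sigma/\sigma$ exactly as you indicate. With exponent $\tfrac1n$ the correct balance is $\eta=A^{1/(n+1)}$, giving $(\eta/A)^{1/n}=A^{-1/(n+1)}$ for $J_E$ and $\eta^{-1}=A^{-1/(n+1)}$ for $J_{E^c}$, which matches the target. (Your treatment of $E^c$ -- boundary terms $\le\eta^{-1}$ at $O_n(1)$ endpoints plus the dyadic estimate of $\int|R'|R^{-2}$ on monotonicity intervals -- is fine once $\eta$ is chosen this way; note also that $|R(\sigma)|\to\infty$ as $\sigma\to\infty$ because all exponents are positive, so the boundary term at infinity vanishes.) As it stands, though, the stated exponent and the stated threshold $\eta$ are mutually inconsistent with the conclusion, so the proof has a genuine gap.
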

\begin{proof}
Making the change of variable $t\to t^{k_n}$ with $k_n=(n+1)/b_0$, we express the integral on the left-hand side of (\ref{iex}) as $I_n=k_n\int_{a^{1/k_n}}^\infty e^{iQ(t)}\frac{dt}{t}$ with $Q(t):=P(t^{k_n})$. Then we  reduce the matter  to proving
\begin{equation}\label{iex1}
|I_n|\les_{n,b_i} a^{-\frac{b_0}{n+1}}.
\end{equation}
Note that $Q''(t)$ has at most $n$ zeros on $(0,\infty)$ (see e.g., Lemma 2 in \cite{CFWZ08}), which are denoted by  $\{t_1,t_2,\cdots,t_{l}\}$ with $l\le n$.   In addition,   writing  $t_0:=a^{1/k_n}$ and $t_{l+1}:=\infty$,  we may rewrite $I_n$ as
 $$I_n=k_n\sum_{j=1}^{l+1}I_n^{(j)},\hskip.2in{\rm  where}\hskip.2in  I_n^{(j)}:=
 \int_{t_{j-1}}^{t_{j}}e^{iQ(t)}\frac{dt}{t}.$$
 Consequently,  (\ref{iex1}) follows from
  \begin{equation}\label{iex2}
|I_n^{(j)}|\les_{n,b_i} a^{-\frac{b_0}{n+1}},\ \  j=1,\cdots,l+1.
\end{equation}
It remains to prove (\ref{iex2}).
 Observe $k-k_nb_0\le 0$ for all $k=1,\cdots,n+1$. Applying Lemma 2.5 in \cite{LL07} to $Q(t)=t^{k_nb_0}+\mu_1t^{k_nb_1}+\cdots+\mu_nt^{k_nb_n}$,
 we get from  $ t>a^{1/k_n}$ that
$$1\les\ \sum_{k=1}^{n+1}t^{k-k_nb_0}|Q^{(k)}(t)|\les\  \sum_{k=1}^{n+1}a^{k/k_n-b_0}|Q^{(k)}(t)|.$$
 Moreover,
because $Q'(t)$ is monotonic on each $(t_{j-1},t_j)$,
 (\ref{iex2}) follows  from  van der Corput's  Lemma (see \cite{S93} Page 334).
\end{proof}
Next, we shall give two approximations  to  the multiplier of the Hilbert transform $\h^{(1)}$
$$m(\xi)={\rm p.v.}\int_{\R} e^{i(t\xi_1+t^{\A_2}\xi_2+\cdots+t^{\A_d}\xi_d)}\frac{dt}{t},$$
where $\{\A_i\}_{i=2}^d$ are given as in (\ref{alpha}).
We first  restrict  the variable  $\xi$ to a special unbounded set
\begin{equation}\label{aa21}
\mathfrak{U}:=\{\xi\in\R^d:\ \xi_1>0,\ \xi_2>0,\ |\xi_i|\le \big(|\xi_2|^\frac{1}{\A_2}|\xi_1|\big)^\frac{\A_i}{2}\ {\rm for}\ i=3,\cdots,d\},
\end{equation}
which is closely related to the  sets $\{S_j\}$  (see  (\ref{C}) below).
In particular,  if $\xi_1>0$ and $\xi_2>0$, we have
\begin{equation}\label{aa13}
m(\xi_1,0,\cdots,0)=\pi i\ \ {\rm and}\ \  m(0,\xi_2,0,\cdots,0)=\Xi_0(\A_2),
\end{equation}
where $\Xi_0(\A_2)$ equals
0 if $\A_2$ is even, and equals ${\pi i}/{\A_2}$ if $\A_2$ is odd.
\vskip.1in
We now  approximate $m(\xi)$ by  $m(\xi_1,0,\cdots,0)$ and $m(0,\xi_2,0,\cdots,0)$, respectively,    while  the  estimates of the resulting errors are essential  in the  proof of (\ref{v11}).
\begin{lemma}\label{z8}
Let $\mathfrak{U}$ be as in (\ref{aa21}), and let $b=\frac{1}{2d(\A_2+1)}$. Then there is a positive constant $C^*$ independent of  $\xi$ such that for all $\xi\in \mathfrak{U}$,
\begin{align}
|m(\xi)-m(\xi_1,0,\cdots,0)|\le&\  C^*\big(\frac{|\xi_2|}{|\xi_1|^{\A_2}}\big)^b\  {\rm and}\label{xi1}\\
|m(\xi)-m(0,\xi_2,0,\cdots,0)|\le&\  C^*\big(\frac{|\xi_1|^{\A_2}}{|\xi_2|}\big)^b\label{xi2}.
\end{align}
\end{lemma}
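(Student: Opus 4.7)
The plan is to use scaling to reduce both inequalities to a common template, then split the rescaled integral at an intermediate scale $|t|=T$ and balance an ``inner'' Taylor-type error against an ``outer'' oscillatory tail supplied by Lemma \ref{lemxin}. For (\ref{xi1}), the change of variable $t\mapsto t/\xi_1$ (permissible since $\xi_1>0$ on $\mathfrak{U}$) yields
\[
m(\xi)-m(\xi_1,0,\ldots,0)=\mathrm{p.v.}\int_{\R}\frac{e^{iP(t)}-e^{it}}{t}\,dt,\qquad P(t)=t+\sum_{i=2}^{d}\eta_i\,t^{\A_i},
\]
with $\eta_i:=\xi_i/\xi_1^{\A_i}$. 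Setting $\delta:=\eta_2=\xi_2/\xi_1^{\A_2}$, the definition (\ref{aa21}) of $\mathfrak{U}$ supplies the crucial side condition $|\eta_i|\le \delta^{\A_i/(2\A_2)}$ for $i\ge 3$. For (\ref{xi2}) I would analogously rescale by $t\mapsto t/\xi_2^{1/\A_2}$, compare against $\mathrm{p.v.}\!\int e^{it^{\A_2}}\,dt/t=\Xi_0(\A_2)$, and obtain a reduction in which $\tilde\eta_1=\epsilon^{1/\A_2}$ and $|\tilde\eta_i|\le \epsilon^{\A_i/(2\A_2)}$ for $i\ge 3$, where $\epsilon:=\xi_1^{\A_2}/\xi_2$.

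On the inner piece $|t|\le T$ I would estimate $|e^{iP(t)}-e^{it}|\le |P(t)-t|$ pointwise; because every $\A_i\ge 2$, dividing by $|t|$ is harmless and integration gives
\[
\int_{|t|\le T}\frac{|P(t)-t|}{|t|}\,dt\ \lesssim\ \sum_{i=2}^{d}|\eta_i|\,T^{\A_i},
\]
with the analogue for (\ref{xi2}) adding only a harmless $T|\tilde\eta_1|$ contribution coming from the linear term. On the outer piece $|t|>T$ the two tails are handled separately by Lemma \ref{lemxin}: for (\ref{xi1}) both $\int_T^{\infty}e^{iP(t)}\,dt/t$ and $\int_T^{\infty}e^{it}\,dt/t$ have leading monomial $t^1$, yielding bounds $\lesssim T^{-1/d}$ and $\lesssim T^{-1}$ respectively; for (\ref{xi2}) the leading monomial is $t^{\A_2}$, producing $\lesssim T^{-\A_2/d}$ for the full phase and $\lesssim T^{-\A_2}$ for the model (via the substitution $s=t^{\A_2}$).

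It remains to balance inner against outer. For (\ref{xi1}) the choice $T=\delta^{-1/(2(\A_2+1))}$ makes $T^{-1/d}=\delta^b$ with $b=\frac{1}{2d(\A_2+1)}$; direct exponent arithmetic then gives $\delta\,T^{\A_2}=\delta^{(\A_2+2)/(2(\A_2+1))}\le \delta^b$ and, invoking $|\eta_i|\le \delta^{\A_i/(2\A_2)}$, $|\eta_i|T^{\A_i}\le \delta^{\A_i/(2\A_2(\A_2+1))}\le \delta^b$ (using $d\A_i\ge \A_2$). The parallel choice $T'=\epsilon^{-1/(2\A_2(\A_2+1))}$ yields (\ref{xi2}) with the same $b$. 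The only delicate step is the bookkeeping at the end: one must verify that the single exponent $b=\frac{1}{2d(\A_2+1)}$ beats \emph{all} of the $d-1$ inequalities produced by the non-linear terms. This goes through precisely because the side condition $|\eta_i|\le\delta^{\A_i/(2\A_2)}$ built into $\mathfrak{U}$ makes the constraints from $i\ge 3$ strictly weaker than the one from $i=2$, so no optimization of $b$ is needed and the stated value is comfortably admissible.
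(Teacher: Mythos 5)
Your proposal is correct and follows essentially the same route as the paper: rescale by $\xi_1$ (resp. $\xi_2^{1/\A_2}$), split at a negative power of the small ratio, Taylor-estimate the inner piece using the side conditions built into $\mathfrak{U}$, and control the outer tails via Lemma \ref{lemxin} (giving $T^{-b_0/d}$ for the full phase) plus a direct bound for the model phase, then balance exponents to land on $b=\frac{1}{2d(\A_2+1)}$. The only (immaterial) deviations are your slightly different cutoff exponent for (\ref{xi2}) and the omission of the trivial regime where the relevant ratio is $\gtrsim 1$, which is handled by $|m|\lesssim 1$.
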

\begin{proof}
Since $|m(\xi)|\les1$,
one can easily get (\ref{xi1})  for ${|\xi_2|}\gtrsim {|\xi_1|^{\A_2}}$, and   (\ref{xi2}) for ${|\xi_2|}\les {|\xi_1|^{\A_2}}$.  So, in what follows, we only  prove (\ref{xi1}) and (\ref{xi2}) for
${|\xi_2|}\ll {|\xi_1|^{\A_2}}$ and  ${|\xi_2|}\gg{|\xi_1|^{\A_2}}$, respectively.
We first show  (\ref{xi1}) for ${|\xi_2|}\ll {|\xi_1|^{\A_2}}$.  Let $\e_i=\frac{\A_i}{2(\A_i+1)\A_2}$ for $i=3,\cdots,d$, and
$\eta_j={\xi_j}{|\xi_1|^{-\A_j}}$ for $j=2,\cdots,d$.
Since  $\xi\in\mathfrak{U}$, we have  $|\eta_2|\ll1$.
By the change of variable $t\to |\xi_1|^{-1}t$, we see from  $\xi_1>0$ that it suffices to establish the inequality
\begin{align}
|m(1,\eta_2,\cdots,\eta_d)-m(1,0,\cdots,0)|\les&\  |\eta_2|^b \label{xi3}
\end{align}
for $|\eta_i|\le |\eta_2|^{(\A_i+1)\e_i}$, $i=3,\cdots,d$.  Let $\e=db$, which is smaller than  $\min_{3\le i\le d}\e_i$ (since $\A_i>\A_2$ for $i\ge 3$),  and let
\begin{equation}\label{c01}
A_\e=|\eta_2|^{-\e}.
\end{equation}
The left-hand side of (\ref{xi3}) is bounded by the sum of $J_1$, $J_2$ and $J_3$,
which are given by
$$
\begin{aligned}
J_1=&\ |\int_{|t|\le A_\e} e^{it}
\big(e^{i(t^{\A_2}\eta_2+\cdots+t^{\A_d}\eta_d)}-1\big)\frac{dt}{t}|,\\
J_2=&\  |\int_{ A_\e}^\infty
e^{i(t+t^{\A_2}\eta_2+\cdots+t^{\A_d}\eta_d)}\frac{dt}{t}|\\
&+|\int^{- A_\e}_{-\infty}
e^{i(t+t^{\A_2}\eta_2+\cdots+t^{\A_d}\eta_d)}\frac{dt}{t}|,\\
J_3=&\ |\int_{ A_\e}^\infty
e^{it}\frac{dt}{t}|+|\int^{- A_\e}_{-\infty}
e^{it}\frac{dt}{t}|.
\end{aligned}$$
We first get by integrating     by parts that $J_3\les A_\e^{-1}$. Invoking $|\eta_i|\le |\eta_2|^{(\A_i+1)\e_i}$ for  $i=3,\cdots,d$, we then have by $\e(1+\A_2)<1$ and (\ref{c01}) that
$$
\begin{aligned}
J_1\les&\   \int_{|t|\le A_\e}
\big(|t|^{\A_2-1}|\eta_2|+\cdots+t^{\A_d-1}|\eta_d|\big)dt\\
\les&\ A_\e^{\A_2}|\eta_2|+\cdots+A_\e^{\A_d}|\eta_d|
\les |\eta_2|^\e.
\end{aligned}
$$
Thanks to Lemma \ref{lemxin}, the first absolute value in the expression of  $J_2$ is $\les A_\e^{-1/d}$.    By changing the variable $t\to -t$,  the second absolute value  in the expression of $J_2$   is $\les A_\e^{-1/d}$ as well. Collecting the above estimates of $J_1$, $J_2$ and $J_3$, we finally achieve (\ref{xi3}) from (\ref{c01}).

Next,
we  show  (\ref{xi2}) for  ${|\xi_2|}\gg {|\xi_1|^{\A_2}}$.  Let $\e_0=\A_2\e$,  $\tilde{\e}_i=\A_2 \e_i$  for $i=3,\cdots,d$ (thus $\min_{3\le i\le d}\tilde{\e}_i>\e_0$), and let
$\zeta_1=\xi_1|\xi_2|^{-{1}/{\A_2}}$,  $\zeta_j=\xi_j|\xi_2|^{-{\A_j}/{\A_2}}$ for $j=3,\cdots,d$. So $|\zeta_1|\ll1$. Since $\xi\in \mathfrak{U}$,
by the change of variable $t\to \xi_2^{-{1}/{\A_2}}t$,
it suffices to show that for all $|\zeta_i|\le |\zeta_1|^{(\A_i+1)\tilde{\e}_i}$  $(i=3,\cdots,d)$,
\begin{align}
|m(\zeta_1,1,\zeta_3,\cdots,\zeta_d)-m(0,1,0,\cdots,0)|\les&\  |\zeta_1|^{\e_0/d}\label{xi4}.
\end{align}
We can bound
the left-hand side of (\ref{xi4}) by
$\sum_{i=1,2,3}L_i$, where
$$
\begin{aligned}
L_1=&\ |\int_{|t|\le B_{\e_0}} e^{it^{\A_2}}
\big(e^{i(t\zeta_1+t^{\A_3}\zeta_3+\cdots+t^{\A_d}\zeta_d)}-1\big)\frac{dt}{t}|,\\
L_2=&\  |\int_{ B_{\e_0}}^\infty
e^{i(t\zeta_1+t^{\A_2}+t^{\A_3}\zeta_3+
\cdots+t^{\A_d}\zeta_d}\frac{dt}{t}|\\
&+|\int^{ -B_{\e_0}}_{-\infty}
e^{i(t\zeta_1+t^{\A_2}+t^{\A_3}\zeta_3+\cdots+t^{\A_d}\zeta_d)}\frac{dt}{t}|,\\
L_3=&\ |\int_{ B_{\e_0}}^\infty
e^{it^{\A_2}}\frac{dt}{t}|+|\int^{ -B_{\e_0}}_{-\infty}
e^{it^{\A_2}}\frac{dt}{t}|.
\end{aligned}$$
We first deduce   by integrating by parts that $L_3\les B_{\e_0}^{-\A_2}$, where
$
B_{\e_0}:=|\zeta_1|^{-\e_0}.
$
 Applying  $|\zeta_i|\le |\zeta_1|^{(\A_i+1)\tilde{\e}_i}$ for  $3\le i\le d$,  we deduce by $\e_0<1/2$ and $\e_0<\tilde{\e}_i$  ($3\le i\le d$)
that
$$
\begin{aligned}
L_1
\les&\ \int_{|t|\le B_{\e_0}} (|\zeta_1|+|t|^{\A_3-1}|\zeta_3|+\cdots+|t^{\A_d-1}||\zeta_d|)dt\\
\les&\  B_{\e_0}|\zeta_1|+B_{\e_0}^{\A_3}|\zeta_3|+\cdots+B_{\e_0}^{\A_d}|\zeta_d|
\les\ |\zeta_1|^{\e_0}.
\end{aligned}
$$
Arguing similarly as in the previous  estimate  of $J_2$, we can also obtain
$L_2\les |\zeta_1|^{\e_0\A_2/d}$ by  Lemma \ref{lemxin}.  We finally  conclude the proof of (\ref{xi4})  by combining the aforementioned estimates of $L_1$, $L_2$ and  $L_3$.
\end{proof}
\subsection{Reduction of  (\ref{v11})}
\label{sub11}
This subsection reduces the proof of (\ref{v11}) to proving  the  Proposition \ref{pp30} below.
Keep in mind that  $U_\circ\subset U$ with $\# U_\circ<\infty$. Denote by $S_0'$ the set $\{n\in\Z:\ [2^n,2^{n+1})\cap U_\circ\neq \emptyset\}$. From the definition of $U_\circ$ we deduce $\# S_0'=\#U_\circ$. As  in $Remark$ \ref{asum}, in what follows, we may assume
$\# U_\circ\ge 2^{10(C^*+\A_2)}$.
Define  the constant $K$  by
\begin{equation}\label{p10}
K=K(U_\circ):=(C^*\# U_\circ)^{1/b}
\end{equation}
where $b$ and $C^*$ are given as in  Lemma \ref{z8}. Since $\# U_\circ\ge 2^{10(C^*+\A_2)}$, there is an integer $M$ such that $M+1$ can be expressed as an element in  $2^{\Z^+}$,  and such that
$$\# U_\circ\in [M,2M).$$
Let $S_0''$ be a maximal subfamily of $S_0'$ with the condition that the gap of arbitrary  two different integers in $S_0'$ is bigger than $1+\log_2(4K^2)$. Next,
 we may pick a {\it decreasing} sequence $\{u_1>u_2>\cdots>u_M\}$ such that each $u_j$
belongs to $U_\circ$ and to exactly one interval $[2^n,2^{n+1})$ with $n\in S_0''$. This choice yields
\begin{equation}\label{p20}
  u_j/u_{j+1}\ge 8K^2,\ j=1,2,\cdots, M-1.
\end{equation}
Hence, to prove (\ref{v11}), it suffices to show the following proposition.
\begin{prop}\label{pp30}
Let $U_\circ$ be as in Subsection \ref{z01}, and let  $\{u_j\}_{j=1}^M$ be as in (\ref{p20}). Then there is a positive constant $c$ independent of $M$
 such that
 $$\|\sup_{1\le j\le M}|\h^{(u_j)}f|\|_2\ge c\sqrt{\log M}$$
 holds for some $f$ with $\|f\|_2=1$.
 \end{prop}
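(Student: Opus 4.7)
The plan is to verify the hypotheses of the abstract Karagulyan-type theorem from \cite{Guo20} by building $M$ pairwise disjoint frequency rectangles $S_1,\ldots,S_M\subset\mathfrak{U}$, each calibrated to one of the scales $u_j$, and using Lemma \ref{z8} to show that on $S_j$ the Fourier symbol of $H^{(u_l)}$ is essentially one of only two explicit constants, determined by whether $l\le j$ or $l>j$.

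After the change of variable $t=us$ the Fourier multiplier of $H^{(u)}$ becomes $\tilde m_u(\xi)=m(\xi_1,u^{1-\A_2}\xi_2,\ldots,u^{1-\A_d}\xi_d)$. For each $j$ I would choose widely separated dyadic scales $A_j$ (for instance $A_j=2^{jN}$ with $N$ large) and set $B_j:=A_j^{\A_2}u_j^{\A_2-1}$, so that the transition value $u_j^{1-\A_2}B_j/A_j^{\A_2}=1$; then take $S_j$ to be a small rectangle around $(A_j,B_j,0,\ldots,0)$ that sits safely inside $\mathfrak{U}$ and continues to do so after each rescaling $(\xi_1,u_l^{1-\A_i}\xi_i)_{i\ge 2}$ for every $l$. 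Pairwise disjointness is secured by the spacing of the $A_j$. For $\xi\in S_j$ the normalized ratio satisfies $u_l^{1-\A_2}\xi_2/\xi_1^{\A_2}\sim(u_j/u_l)^{\A_2-1}$, so the lacunarity $u_l/u_{l+1}\ge 8K^2$ with $K=(C^*\#U_\circ)^{1/b}$, combined with Lemma \ref{z8}, gives
\begin{align*}
|\tilde m_{u_l}(\xi)-\pi i|&\lesssim (8K^2)^{-(j-l)(\A_2-1)b}\quad\text{for }l\le j,\text{ by (\ref{xi1})},\\
|\tilde m_{u_l}(\xi)-\Xi_0(\A_2)|&\lesssim (8K^2)^{-(l-j)(\A_2-1)b}\quad\text{for }l>j,\text{ by (\ref{xi2})}.
\end{align*}
Summed over $l\neq j$, the total error is $O(1/\#U_\circ)$ thanks to the identity $K^b=C^*\#U_\circ$.

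Consequently, on $S_j$ the symbol $\tilde m_{u_l}$ is well approximated by the piecewise-constant value $c_l^{(j)}:=\pi i\cdot\chi_{\{l\le j\}}+\Xi_0(\A_2)\cdot\chi_{\{l>j\}}$, and (\ref{aa13}) guarantees $\pi i\neq\Xi_0(\A_2)\in\{0,\pi i/\A_2\}$, so $c_l^{(j)}$ and $c_{l'}^{(j)}$ are separated by an absolute amount whenever $l\le j<l'$. This triangular separation is exactly the input required by the Karagulyan-type theorem of \cite{Guo20}, which then produces a unit-norm test function $f$---built from Rademacher-like atoms with Fourier support in the disjoint $S_j$, so that $\|f\|_2\sim\sqrt{M}$ by Fourier orthogonality---for which $\|\sup_{1\le j\le M}|H^{(u_j)}f|\|_2\gtrsim\sqrt{\log M}\|f\|_2$. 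The principal obstacle is combinatorial bookkeeping: arranging the $A_j$ so that the $S_j$ are simultaneously pairwise disjoint and deep enough inside $\mathfrak{U}$ to survive every rescaling $(\xi_1,u_l^{1-\A_2}\xi_2,\ldots,u_l^{1-\A_d}\xi_d)$ (the third-and-higher coordinate conditions being the delicate point, since they tighten as $u_l$ increases), and ensuring that the cumulative Lemma \ref{z8} error stays below the gap $|\pi i-\Xi_0(\A_2)|$. The engineered choice $K=(C^*\#U_\circ)^{1/b}$ together with the super-lacunary ratio $8K^2$ in (\ref{p20}) was designed precisely to close this loop.
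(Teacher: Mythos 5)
Your overall strategy matches the paper's: produce $M$ pairwise disjoint frequency sets $S_j$ calibrated to the scales $u_j$, use Lemma \ref{z8} to show that on $S_j$ the multiplier of $H^{(u_l)}$ is within $O(M^{-1})$ of $\pi i$ when $l\le j$ and of $\Xi_0(\A_2)$ when $l>j$, and feed this two-constant structure into the Karagulyan-type Proposition \ref{pp30}'s engine (Proposition \ref{pp40}). However, your construction of the $S_j$ has two concrete defects. First, Proposition \ref{pp40} requires each $S_j$ to contain balls of \emph{arbitrarily large radii}; this is not cosmetic, since the Riesz-product-type construction of the atoms $f_\omega$ needs Fourier support available at arbitrarily high frequency inside each $S_{\tau(\omega)}$. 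A ``small rectangle around $(A_j,B_j,0,\dots,0)$'' is bounded and cannot host such functions, so the black-box theorem you invoke does not apply to your sets. The paper instead takes $S_j$ to be the unbounded sector in (\ref{C}), defined by $(2Ku_j^{1-\A_2})^{-1}<\xi_2/\xi_1^{\A_2}<(Ku_j^{1-\A_2})^{-1}$ together with $\xi_i\le u_M^{\beta_i}\big(\xi_2^{1/\A_2}\xi_1\big)^{\A_i/2}$ for $i\ge 3$; these conditions are essentially invariant under the anisotropic rescalings $\xi\mapsto u_l\de_0^\A$-type dilations, which settles both disjointness and the membership of $u_j\xi$ in $\mathfrak{U}$ (the point you correctly flag as delicate) without any bookkeeping over the $A_j$.

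Second, your calibration $B_j=A_j^{\A_2}u_j^{\A_2-1}$ places the normalized ratio $u_j^{1-\A_2}\xi_2/\xi_1^{\A_2}$ at exactly $1$ on $S_j$ in the diagonal case $l=j$. There Lemma \ref{z8} gives only an $O(1)$ error, not the decay $(8K^2)^{-(j-l)(\A_2-1)b}$ you claim (which is trivial at $l=j$), and the diagonal term is precisely the one that must be close to $\pi i$ in order to reconstruct $\sum_{\tau(\omega)\ge j}f_\omega$ and exploit (\ref{z3}). The paper avoids this by centering the ratio at $\sim K^{-1}$ rather than at $1$, so that $\tau(\omega)=j$ already falls on the (\ref{xi1}) side with error $\le C^*K^{-b}\le M^{-1}$; it then needs no geometric decay in $|j-l|$ at all, only the uniform bound $M^{-1}$ per atom combined with disjointness of supports, Cauchy-Schwarz, and $\ell^\infty\subset\ell^2$ over $j$. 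Both defects are repairable (replace the rectangles by the sectors of (\ref{C}) and shift the calibration by a factor of $K$), after which your error accounting closes as in the paper.
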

 \subsection{A modification of Karagulyan's theorem}
 We shall introduce a useful theorem generalizing    Karagulyan's main result in \cite{Ka07}, see also Proposition 8.5 in Guo-Roos-Seeger-Yung \cite{Guo20}. For $\mu\in \Z^+$, we denote by
 $$W_\mu=\{\emptyset\}\cup \bigcup_{l=1}^{\mu-1}\{0,1\}^l$$
the set of binary words of length at most $\mu-1$, and define by
 $\tau:W_\mu\to \{1,\cdots,2^{\mu}-1\}$  the bijection. This
 bijection $\tau$  satisfies  $\tau(\emptyset)=2^{\mu-1}$ and
 $$\tau(\omega)=\omega_12^{\mu-1}+\omega_22^{\mu-2}+\cdots+\omega_l2^{\mu-l}+2^{\mu-l-1}$$
 if
 $\omega=\omega_1\omega_2\cdots\omega_l$ for certain $l\in \{1,\cdots,\mu-1\}$ and for some
 $\omega_1,\cdots,\omega_l\in\{0,1\}$.
 Following  the proof of Proposition 8.5 in \cite{Guo20}, one can also obtain  similar result in higher dimensions.
 \begin{prop}\label{pp40}
 Let $\mu$ be any positive integer, $M=2^\mu-1$, and let $S_1,\cdots,S_M$ be pairwise disjoint subsets of the whole space $\R^d$, where $S_j$ contains balls of arbitrary large radii. Then there is an $L^2$ function $f$ on $\R^d$ that has an orthogonal
 decomposition
 \begin{equation}\label{z1}
 f=\sum_{\omega\in W_\mu}f_\omega,
 \end{equation}
 where the functions $\{f_\omega\}$ satisfy
  \begin{equation}\label{z2}
 {\rm supp}\widehat{f_\omega}\subset S_{\tau(\omega)}\ {\rm for\  all\ } \omega\in W_\mu,\hskip.3in
 \|f\|_2^2=\sum_{\omega\in W_\mu}\|f_\omega\|_2^2\le2\ \ \ {\rm and}
  \end{equation}
   \begin{equation}\label{z3}
 \big\|\sup_{1\le j\le M}|\sum_{\omega\in W_\mu:\tau(\omega)\ge j}f_\omega|\big\|_2
 \ge \frac{\sqrt{\mu}}{100}\|f\|_2.
   \end{equation}
 \end{prop}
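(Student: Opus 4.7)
The plan is to build $f$ and the decomposition $\{f_\omega\}$ by a tree-indexed wave-packet construction in the spirit of Karagulyan, following Proposition 8.5 of \cite{Guo20}. That argument uses only the disjointness of the $S_j$ and the large-ball hypothesis; neither feature depends on the dimension, so the construction carries over to $\R^d$ for all $d\ge 3$ without any structural change.

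View $W_\mu$ as a binary tree of depth $\mu-1$, with root $\emptyset$, and note that $\tau$ is the in-order traversal of this tree; hence, for each $j\in\{1,\dots,M\}$, the set $\{\omega:\tau(\omega)\ge j\}$ is an up-set that grows as $j$ decreases. By hypothesis, for each $\omega\in W_\mu$ we may pick a ball $B(\xi_\omega,r_\omega)\subset S_{\tau(\omega)}$ with $r_\omega$ as large as desired; we fix these radii inductively so that $r_\omega$ is super-exponentially larger than $r_{\omega'}$ whenever $\omega'$ is a strict prefix of $\omega$. Choose a Schwartz bump $\phi$ with $\widehat\phi$ supported in the unit ball, $\phi(0)=1$, and $\|\phi\|_2=1$, and set
$$f_\omega(x)=c_\omega\,\phi\bigl(r_\omega(x-x_\omega)\bigr)\,e^{i\xi_\omega\cdot x},$$
with scalars $c_\omega\in\R$ and physical centers $x_\omega\in\R^d$ to be chosen recursively so that $x_{\omega 0}, x_{\omega 1}$ sit at separation $\sim r_\omega^{-1}$ from one another while both lying well inside the $r_{\omega'}^{-1}$-neighborhood of $x_{\omega'}$ for every strict prefix $\omega'$. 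Normalize $|c_\omega|$ so that $\|f_\omega\|_2^2\le 2^{-|\omega|}/\mu$; since there are $2^l$ words of length $l$, this yields $\sum_\omega\|f_\omega\|_2^2\le 1\le 2$.

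Orthogonality follows from $\mathrm{supp}\,\widehat{f_\omega}\subset S_{\tau(\omega)}$ combined with pairwise disjointness of the $S_j$, which gives (\ref{z1}) and the equality in (\ref{z2}) via Plancherel; the $L^2$ upper bound is by the chosen normalization. For the lower bound (\ref{z3}), fix a leaf $\sigma=\sigma_1\cdots\sigma_{\mu-1}$ and a point $x$ in the innermost physical cell $R_\sigma$ of scale $r_\sigma^{-1}$ around $x_\sigma$. By the scale separations, $|f_\omega(x)|\sim|c_\omega|$ when $\omega$ is an ancestor of $\sigma$, while $|f_\omega(x)|$ is negligible (controllably small in any prescribed sense) for $\omega$ not an ancestor, thanks to the Schwartz decay of $\phi$ and the super-exponential growth of the $r_\omega$. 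Consequently, on $R_\sigma$ the partial sum $T_j(x):=\sum_{\tau(\omega)\ge j}f_\omega(x)$ reduces, up to arbitrarily small error, to a one-dimensional walk of $\mu$ steps along the root-to-leaf path $\sigma$, whose increments are $\pm|c_\omega|$ governed by the sign of $c_\omega$ and the modulation phase $e^{i\xi_\omega\cdot x_\sigma}$.

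The main obstacle, and the only genuinely delicate point, is to fix these signs and phases so that, simultaneously for every leaf $\sigma$, the resulting walk realizes $\sup_j|T_j(x)|\gtrsim\sqrt{\mu}\,\|f\|_2$ on a sufficiently large portion of $R_\sigma$. This is handled exactly as in \cite{Guo20}: one either averages over Rademacher sign choices (invoking a Khintchine-type maximal estimate to extract a favourable deterministic choice) or invokes Karagulyan's explicit construction. Integrating the resulting pointwise bound over the disjoint leaf cells and comparing with $\|f\|_2$ yields (\ref{z3}). The technical price for this comparison is the uniform control of the Schwartz tails across $j$ and across leaves, and it is precisely this control that forces the super-exponential growth of the $r_\omega$, made possible by the large-ball hypothesis on each $S_j$.
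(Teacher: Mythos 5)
The paper does not actually prove Proposition \ref{pp40}; it states that one obtains it ``following the proof of Proposition 8.5 in \cite{Guo20}'', so deferring the core construction to that reference is, in itself, consistent with what the paper does. The problem is that the scaffolding you build around that citation misidentifies where the factor $\sqrt{\mu}$ comes from, and the specific choices you make are quantitatively incompatible with (\ref{z3}). First, the combinatorics: $\{\omega:\tau(\omega)\ge j\}$ is \emph{not} an up-set in the tree order (for $\mu=2$, $j=3$ it is $\{1\}$, which omits the root). What the in-order traversal actually gives, and what the argument needs, is that \emph{restricted to the ancestors of a fixed leaf} $\sigma$ these sets form a chain that adds the $\mu$ ancestors one at a time (in decreasing order of $\tau$), so that $\sup_j|T_j(x)|$ dominates all partial sums of the branch sequence at $x$. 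Second, and more seriously, the ``random walk / Rademacher / Khintchine'' mechanism cannot produce the bound. If each point of a set $E$ of measure $1$ lies in one cell per generation on which the corresponding $|f_\omega|=c$, then $\|f\|_2^2=\sum_\omega\|f_\omega\|_2^2=\mu c^2$, while a $\mu$-step walk with independent $\pm$ increments of size $c$ has maximal partial sum $\sim c\sqrt{\mu}$; this yields $\|\sup_j|T_j|\|_2\gtrsim\|f\|_2$ with \emph{no} gain. The gain of $\sqrt{\mu}$ in (\ref{z3}) requires the increments along each branch to add \emph{coherently} (maximal partial sum $\sim\mu c$ against square function $\sim\sqrt{\mu}\,c$) on a set of fixed positive measure --- exactly the configuration that saturates the Chang--Wilson--Wolff inequality, and the opposite of a random-sign walk. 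Averaging over Rademacher signs therefore loses the factor you need; only ``Karagulyan's explicit construction'' branch of your dichotomy is viable, and it is precisely the part you do not carry out.

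Your geometric normalization is also inconsistent with the target estimate. With $\|f_\omega\|_2^2\le 2^{-|\omega|}/\mu$ and $f_\omega$ a bump of width $r_\omega^{-1}$, one has $|c_\omega|\sim 2^{-|\omega|/2}\mu^{-1/2}r_\omega^{d/2}$; if the $r_\omega$ grow super-exponentially in depth, the branch increments are wildly non-comparable (each dwarfing all its ancestors), so the ``walk'' degenerates to its last step, and integrating $\sup_j|T_j|^2$ over the leaf cells gives at most $\sum_{|\sigma|=\mu-1}\|f_\sigma\|_2^2\sim\|f\|_2^2/\mu$ --- worse than trivial. Moreover, placing the two children as separated bumps occupying a vanishing fraction of the parent cell makes the union of leaf cells have measure tending to $0$, so no pointwise bound on that union can recover (\ref{z3}). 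In the correct construction the generation-$l$ supports essentially tile a common region of fixed measure (each point sees exactly one comparable increment per generation), and the large-ball hypothesis is used only to choose the modulation frequencies $\xi_\omega$ and the mollification scales fine enough that each $\widehat{f_\omega}$ fits inside $S_{\tau(\omega)}$ and the coherence can be arranged recursively; it is not used to shrink the physical cells super-exponentially. As written, your outline would need to be restructured around these points before it could be completed.
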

 We next prove Proposition \ref{pp30} by accepting  Proposition \ref{pp40}.   Invoking the arguments in  $Remark$ \ref{asum}, we assume that $M$ is sufficiently large.
 \subsection{Proof of Proposition \ref{pp30}}
 \label{sub33}
Keep (\ref{aa13}) in mind, and
define a sequence of sets $\{S_j\}_{j=1}^M$ by
\begin{equation}\label{C}
S_j:=\big\{\xi\in \R_+^d:
 \frac{1}{2Ku_j^{1-\A_2}}<\frac{\xi_2}{\xi_1^{\A_2}}< \frac{1}{Ku_j^{1-\A_2}},\ \xi_i\le u_M^{\beta_i}\big(|\xi_2|^\frac{1}{\A_2}|\xi_1|\big)^\frac{\A_i}{2}
 {\rm for}\ \ i=3,\cdots,d\big\},
\end{equation}
 where $\beta_i=\frac{\A_i(\A_2+1)-2\A_2}{2\A_2}>0$. It is clear that $\{S_j\}_{j=1}^M$ satisfy
 all corresponding conditions in Proposition \ref{pp40}. Thus,  there is an $L^2$ function $f$ on $\R^d$ such that (\ref{z1})-(\ref{z3}) hold.
For $1\le j\le M$,  we  deduce from (\ref{z1}) that
$$
\begin{aligned}
&|\h^{(u_j)}f(x)-\Xi_0(\A_2)f(x)|\ge \
\big|\sum_{\omega\in W_\mu:\tau(\omega)\ge j}(\pi i-\Xi_0(\A_2))f_\omega(x) \big|\\
&\ \ -
\big|\sum_{\omega\in W_\mu:\tau(\omega)\ge j} (\h^{(u_j)}-\pi i)f_\omega(x)\big|
-\big|\sum_{\omega\in W_\mu:\tau(\omega)< j} (\h^{(u_j)}-\Xi_0(\A_2))f_\omega(x) \big|,
 \end{aligned}
 $$
 where $\Xi_0(\A_2)$ is defined by the statements below (\ref{aa13}).
 Since $|\Xi_0(\A_2)-\pi i|\ge (1-\A_2^{-1})\pi$,  we have
 $$
\begin{aligned}
\sup_{1\le j\le M}|\h^{(u_j)}f(x)-\Xi_0(\A_2)f(x)|\ge& \
 (1-\A_2^{-1})\pi\sup_{1\le j\le M}\big|\sum_{\omega\in W_\mu:\tau(\omega)\ge j}f_\omega(x) \big|\\
&\ -
\sup_{1\le j\le M}\big|\sum_{\omega\in W_\mu:\tau(\omega)\ge j} (\h^{(u_j)}-\pi i)f_\omega(x)\big|\\
&\ -\sup_{1\le j\le M}\big|\sum_{\omega\in W_\mu:\tau(\omega)< j} (\h^{(u_j)}-\Xi_0(\A_2))f_\omega(x)\big|\\
=:&\ L_1(x)+L_2(x)+L_3(x).
 \end{aligned}
 $$
It follows from  (\ref{z3}) that there exists a constant $c_1>0$ such that
 \begin{equation}\label{c4}
 \|L_1\|_2\ge c_1\sqrt{\log_2 M}\|f\|_2.
 \end{equation}
 We next
    bound  $L_2(x)$ and $L_3(x)$ in order.
 Note that (\ref{z2}) implies  supp$\widehat{f_\omega}\subset S_{\tau(\omega)}$. Since
 $u_{\tau(\omega)}\le u_j$ (because $\tau(\omega)\ge j$ in $L_2(x)$),    $\xi\in {\rm supp}\widehat{f_\omega}$ and $\A_2>1$, we obtain
 $$\frac{u_j\xi_2}{(u_j\xi_1)^{\A_2}}
 =\frac{\xi_2}{u_j^{\A_2-1}\xi_1^{\A_2}}\le \frac{\xi_2}{u_{\tau(\omega)}^{\A_2-1}\xi_1^{\A_2}}\le K^{-1}\ \ \ {\rm and}$$
 $$\xi_i\le u_M^{\beta_i}
 \Big(|\xi_2|^\frac{1}{\A_2}|\xi_1|\Big)^{{\A_i}/{2}}\le u_j^{\beta_i}\Big(|\xi_2|^\frac{1}{\A_2
 }|\xi_1|\Big)^{{\A_i}/{2}}
\ \ \ \ \ \ {\rm  for}\ \ i=3,\cdots,d.$$
  By (\ref{xi1}) and  (\ref{p10}), we can infer
 $$|m(u_j\xi)-m(u_j\xi_1,0,\cdots,0)|\le C^*K^{-b}\le M^{-1},$$
 which implies
 $\|(\h^{(u_j)}-\pi i)f_\omega\|_2\le M^{-1}\|f_\omega\|_2$ by  Plancherel's theorem and (\ref{aa13}).
 Using this bound and (\ref{z2}), we further deduce
  \begin{equation}\label{c5}
 \begin{aligned}
 \|L_2\|_2\le&\  M^{1/2}\sup_{1\le j\le M}\|\sum_{\omega\in W_\mu:\tau(\omega)\ge j}|(\h^{(u_j)}-\pi i)f_\omega|\|_2\\
 \le&\ M\sup_{1\le j\le M}\big(\sum_{\omega\in W_\mu:\tau(\omega)\ge j}\|(\h^{(u_j)}-\pi i)f_\omega\|_2^2\big)^{1/2}\\
 \le&\  (\sum_{\omega}\|f_\omega\|_2^2)^{1/2}\le 2\|f\|_2,
 \end{aligned}
 \end{equation}
 where we  used  $l^2\subset l^\infty$ and Fubini's  theorem in the first inequality, and applied the Cauchy-Schwartz inequality in the second inequality.
 We now bound $L_3(x)$. Since
  $u_{\tau(\omega)}>u_j$ (because $\tau(\omega)<j$) and $\xi\in {\rm supp}\widehat{f_\omega}$, we obtain
$$\frac{u_j\xi_2}{(u_j\xi_1)^{\A_2}}=\frac{\xi_2}{u_{\tau(\omega)}^{\A_2-1}\xi_1^{\A_2}}(\frac{u_{\tau(\omega)}}{u_j})^{\A_2-1}\ge (2 K)^{-1} (8K^2)^{(\A_2-1)}\ge K,$$
where we  used $\A_2\ge 2$ (in fact, we can also treat the case $\A_2\in (1,2)$ by modifying the above construction of $S_0''$ such that $u_j/u_{j+1}\ge K^\frac{2}{\A_2-1}$).
By (\ref{xi2}) and  (\ref{p10}), we  have
 $$|m(u_j\xi)-m(0,u_j\xi_2,0,\cdots,0)|\le C^*K^{-b}\le M^{-1},$$
which leads to
 $\|(\h^{(u_j)}-\Xi(\A_2))f_\omega\|_2\le M^{-1}\|f_\omega\|_2$ by Plancherel's identity and (\ref{aa13}).
 Performing a similar arguments yielding the desired bound of $
 \|L_2\|_2$, we can also  get
  \begin{equation}\label{c6}
 \|L_3\|_2\le 2\|f||_2.
 \end{equation}
 Finally,  it follows by combining (\ref{c4})-(\ref{c6}) that
 $$\|\sup_{1\le j\le M}|\h^{(u_j)}f-\Xi_0(\A_2) f|\|_2\ge \frac{c_1}{2}\sqrt{\log_2M} \|f\|_2,$$
  which  immediately yields
 $$\|\sup_{1\le j\le M}|\h^{(u_j)}f|\|_2\ge \frac{c_1}{4}\sqrt{\log_2M} \|f\|_2$$
 by setting $M$ large enough such that $ c_1\sqrt{\log_2M}>4\Xi_0(\A_2) $. This
finishes  the proof of Proposition \ref{pp30}.
\section*{Acknowledgements}
The author thanks the anonymous referees
for their careful reading and helpful comments. 
 This work was supported by the NSF of China 11901301, 12161077.

\vskip .2in

\end{document}